\documentclass[11pt,a4paper]{article}

\usepackage[utf8]{inputenc}
\usepackage[margin=3.1cm]{geometry}
\usepackage{amsmath,amssymb,amsthm}
\usepackage{graphicx}
\usepackage{float}
\usepackage{booktabs}
\usepackage{comment}
\usepackage{enumitem}
\usepackage{tikz}
\usetikzlibrary{arrows.meta,positioning,calc,decorations.pathreplacing}
\usepackage[colorlinks=true,allcolors=blue!55!black]{hyperref}
\usepackage{xcolor}
\usepackage{aliascnt}
\usepackage{cleveref}
\hypersetup{pdftitle={Boundary stabilization of a moving interface in a 2x2 weakly hyperbolic system of balance laws: application to continuous sedimentation},pdfauthor={M. O. Amirat, V. Andrieu, M. Bajodek, C. Valentin}}

\newcommand{\appiss}{\hyperref[app:iss]{Appendix~\ref*{app:iss}}}
\newcommand{\appsteady}{\hyperref[app:steady]{Appendix~\ref*{app:steady}}}
\newcommand{\appwp}{\hyperref[app:wp]{Appendix~\ref*{app:wp}}}

\newtheorem{theorem}{Theorem}
\crefname{theorem}{Theorem}{Theorems}
\newaliascnt{proposition}{theorem}
\newtheorem{proposition}[proposition]{Proposition}
\aliascntresetthe{proposition}
\crefname{proposition}{Proposition}{Propositions}
\newaliascnt{lemma}{theorem}
\newtheorem{lemma}[lemma]{Lemma}
\aliascntresetthe{lemma}
\crefname{lemma}{Lemma}{Lemmas}
\newaliascnt{corollary}{theorem}

\aliascntresetthe{corollary}
\crefname{corollary}{Corollary}{Corollaries}
\theoremstyle{definition}
\newaliascnt{definition}{theorem}
\newtheorem{definition}[definition]{Definition}
\aliascntresetthe{definition}
\crefname{definition}{Definition}{Definitions}
\newaliascnt{assumption}{theorem}
\newtheorem{assumption}[assumption]{Assumption}
\aliascntresetthe{assumption}
\crefname{assumption}{Assumption}{Assumptions}
\theoremstyle{remark}
\newaliascnt{remark}{theorem}
\newtheorem{remark}[remark]{Remark}
\aliascntresetthe{remark}
\crefname{remark}{Remark}{Remarks}

\newcommand{\Rp}{\mathbb{R}_{+}}
\newcommand{\eps}{\varepsilon}
\newcommand{\epss}{\varepsilon_s}
\newcommand{\epsin}{\varepsilon_{in}}
\newcommand{\Om}{\Omega}
\newcommand{\Hs}{\mathcal{H}}
\newcommand{\Vc}{\mathcal{V}}
\newcommand{\Nc}{\mathcal{N}}
\newcommand{\Uc}{\mathcal{U}}
\newcommand{\Dpar}{\mathcal{D}_\partial}
\newcommand{\Lk}{\mathbf{L}}
\newcommand{\Ab}{\mathbf{A}}
\newcommand{\Bb}{\mathbf{B}}
\newcommand{\Cb}{\mathbf{C}}
\newcommand{\Db}{\mathbf{D}}
\newcommand{\Pb}{\mathbf{P}}
\newcommand{\Qb}{\mathbf{Q}}
\newcommand{\Wb}{\mathbf{W}}
\newcommand{\Nb}{\mathbf{N}}
\newcommand{\Go}{\mathbf{G}_o}
\newcommand{\Yo}{Y_o}
\newcommand{\Kint}{\mathbf{K}_{int}}
\newcommand{\Pib}{\boldsymbol{\Pi}}
\newcommand{\Xib}{\boldsymbol{\Xi}}
\newcommand{\dd}{\,\mathrm{d}}
\newcommand{\jump}[1]{\left[#1\right]}

\title{\bfseries Boundary stabilization of a moving interface in a
\texorpdfstring{$2\times2$}{2x2} weakly hyperbolic system of balance laws:
application to continuous sedimentation}

\author{M.~O.~Amirat \and V.~Andrieu \and M.~Bajodek \and C.~Valentin
\thanks{M.~O.~Amirat, V.~Andrieu and C.~Valentin: Université Lyon 1, CNRS, LAGEPP, UMR 5007, Villeurbanne, France. M.~Bajodek: 
Université Lyon 1, CNRS, LAGEPP, UMR 5007, CPE Lyon, Villeurbanne, France.}}

\date{}

\begin{document}
\maketitle

\begin{abstract}
\noindent
We design a boundary feedback for the level of the sediment bed of a continuous settler and prove that it exponentially stabilizes the linearized
closed loop under a small-gain condition.
The unit is modeled by a $2\times2$ system of balance laws in which
the bed surface, the interface, is not a smooth part of the solution but a moving \emph{shock}
between a weakly hyperbolic free-settling zone and a strictly hyperbolic
compression zone. We describe it by the Rankine--Hugoniot relations and map the
problem onto a fixed domain. Moreover, we  prove local well-posedness with a continuation
criterion given by a single margin function. Then we linearize the system near an equilibrium. The linearization
couples a dissipative distributed layer to one scalar, the interface
position, whose equation does not contain the control. We solve this by
infinite-dimensional \emph{forwarding}: a Sylvester kernel cancels the
interface trace and isolates the interface pole, and a proportional feedback
then sets its decay rate.
Every hypothesis is verified on an illustrative example, a laboratory-scale
continuous settler containing a bed of compact polystyrene beads.
\end{abstract}

\noindent\textbf{Keywords:} boundary control; hyperbolic systems of balance
laws; weak hyperbolicity; moving interface; shock; forwarding;
dissipation inequality; sedimentation.

\section{Introduction}
\label{sec:intro}

The clarifier is the separation stage of an activated-sludge wastewater
treatment plant: the mixed liquor coming from the aeration tank is left to
settle, clarified water leaves at the top, and the thickened sludge is
withdrawn at the bottom and partly recirculated. Inside the unit, a sharp
horizontal front, the \emph{sludge blanket}, separates a dilute
suspension above from a compressed sediment below (\Cref{fig:unit}). Keeping
the sludge blanket at a prescribed depth is the central operational problem: if it is too high, solids
escape with the effluent; if it is too low, the recirculated sludge is too thin.
The only actuation available is the underflow withdrawal rate.
The model and the design below apply to any suspension with a contact
threshold, so in the rest of the paper we call the front the \emph{interface} and the
compressed sediment the \emph{bed}.

Classical models come from Kynch's kinematic theory \cite{kynch1952theory}
and close a scalar conservation law for the solid volume fraction with an
algebraic settling law \cite{burger2005model}; they propagate concentration
fronts but ignore momentum. Replacing that closure by a momentum balance
\cite{chauchat2013modelling, francca1999study} yields the $2\times2$ system we control here,
validated on full-scale data in~\cite{valentin2023knowledge}.

The problem differs in two ways from the standard boundary control theory of
hyperbolic systems \cite{bastin2016stability}.

First, the target steady state is \emph{not smooth}. In the boundary stabilization
literature, one usually starts from a regular initial condition close to a
smooth steady state and designs a feedback law that keeps the solution  smooth
and close to it. In  other words, enough dissipation is introduced to prevent
any shock formation (see \cite{bastin2016stability}, with many applications,
in particular to channel flows). Shocks are nevertheless  of high physical
relevance, since  they may represent moving interfaces. In our case, the interface described above \emph{is} represented as a shock and its position is
the regulated output.

Second, in the free-settling zone the system we consider is only \emph{weakly} hyperbolic. Where the
interparticle stress vanishes, the flux Jacobian has a double eigenvalue, the
particle velocity, with a one-dimensional eigenspace: it is a Jordan block.
Such systems appear in elasticity~\cite{keyfitz1980system} and in pressureless
gas dynamics~\cite{bouchut1994zero}. They cannot be diagonalized, and the
natural energy space is not $L^2$ for both components: one of them must be
measured in $H^1$ (see our study in \cite{amirat2026insights}).

The problem we solve is the following.

\begin{quote}
\emph{Given an admissible stationary profile $y^*$ with interface at $z_s^*$,
design a feedback law for the underflow velocity $v_m$ that stabilizes the
interface level $z_s(t)$ at $z_s^*$ together with the distributed
profiles.}
\end{quote}
The law we obtain is a full-state feedback (it uses $z_s(t)$ and the profiles
on both zones). The design of an observer is left for future work.

\begin{figure}[t]
\centering
\includegraphics[width=0.6\textwidth]{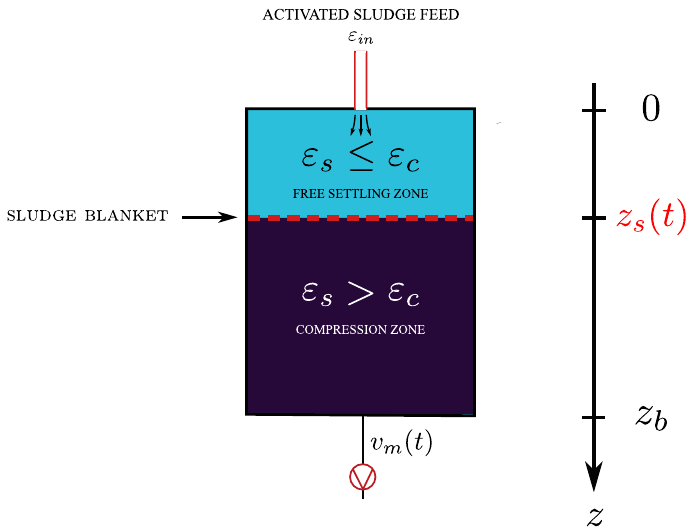}
\begin{comment}
\begin{tikzpicture}[>=Latex, font=\small]
  \fill[blue!5]   (0,0) rectangle (2.6,-2.7);
  \fill[brown!18] (0,-2.7) rectangle (2.6,-4.6);
  \draw[thick] (0,0) rectangle (2.6,-4.6);
  \draw[very thick] (0,-2.7) -- (2.6,-2.7);
  \node[font=\footnotesize] at (1.3,-1.15) {dilute};
  \node[font=\footnotesize] at (1.3,-1.52) {suspension};
  \node[font=\footnotesize] at (1.3,-2.47) {interface};
  \node[font=\footnotesize] at (1.3,-3.45) {compressed};
  \node[font=\footnotesize] at (1.3,-3.82) {sediment};
  \draw[->, thick] (1.3,0.75) -- (1.3,0.08);
  \node[font=\footnotesize] at (1.3,0.95) {feed};
  \draw[->, thick] (1.3,-4.68) -- (1.3,-5.35);
  \node[font=\footnotesize] at (1.3,-5.55) {underflow $v_m(t)$};
  \draw[->] (-0.5,0) -- (-0.5,-5.0) node[below,font=\footnotesize] {$z$};
  \foreach \y/\t in {0/0,-2.7/z_s(t),-4.6/z_b}
    \draw (-0.42,\y) -- (-0.58,\y) node[left,font=\footnotesize] {$\t$};
  \draw[dashed] (-0.5,-2.7) -- (0,-2.7);
  \draw[dashed] (-0.5,0) -- (0,0);
  \draw[dashed] (-0.5,-4.6) -- (0,-4.6);
  \draw[decorate,decoration={brace,amplitude=4pt}]
    (2.75,-0.03) -- (2.75,-2.67)
    node[midway,right=5pt,font=\footnotesize,align=left]
    {free-settling\\zone, $\epss<\eps_c$};
  \draw[decorate,decoration={brace,amplitude=4pt}]
    (2.75,-2.73) -- (2.75,-4.57)
    node[midway,right=5pt,font=\footnotesize,align=left]
    {compression\\zone, $\epss>\eps_c$};
\end{tikzpicture}
\end{comment}
\caption{The two particle-containing zones of the unit, from the feed level
$z=0$ to the bottom $z=z_b$ (the clarified-water zone above the feed is not
represented). The solid fraction is below the critical value $\eps_c$, at
which the particles come into contact, in the free-settling zone above the
interface, and above $\eps_c$ in the compression zone below it. The
interface  level $z_s(t)$ is  what must be  regulated, and the underflow velocity $v_m$ is the
only actuation.}
\label{fig:unit}
\end{figure}

After the fixed-domain reformulation and linearization, the plant
interconnects a distributed state on the two zones with one scalar, the
interface position error $\eta$. This scalar obeys a first-order equation
$\dot\eta=\Go\Yo$, driven by the outgoing wave traces $\Yo$, and the control
does not appear in it: it enters only at the boundaries and in the volume of
the distributed layer. A stable subsystem driving an integrator on which the control does not act
directly is the configuration for which \emph{forwarding} was
introduced~\cite{mazenc1996adding,sepulchre1997constructive}. Forwarding was
extended to the boundary control of exponentially stable hyperbolic systems
in~\cite{terrand2019adding}, to dissipative infinite-dimensional systems
coupled with an ODE in~\cite{marx2021forwarding} and to output regulation for
contraction semigroups in~\cite{vanspranghe2023output}; it is close to the
integral action of~\cite{dos2008boundary}. The same structural problem, a
hyperbolic layer coupled to an ODE, is also solved by
backstepping~\cite{dimeglio2018stabilization,deutscher2018output}, which
redistributes the damping through a Volterra transformation; forwarding keeps
the dissipation the layer already has and only moves the interface pole.

The paper makes five contributions.
\begin{enumerate}[label=(\arabic*),leftmargin=2em,itemsep=2pt]
\item We define \emph{admissible interface solutions} of \eqref{eq:model}: the
discontinuity is isolated by the Rankine--Hugoniot relations instead of being
resolved in $BV$, and the solution is classical on each side. A $C^1$
solution cannot cross the phase transition with a non-zero gradient
(\Cref{thm:breakdown}), but the coupled
state--interface problem is locally well posed, with a continuation criterion
given by one scalar margin function and four breakdown cases (\Cref{thm:wp},
\appwp{}).
\item We map the two zones onto fixed intervals by a time-dependent change of
the space variable, as in \cite{li1985boundary,bastin2019exponential}, and
linearize in Riemann coordinates that are diagonal on the compression zone and
in Jordan form on the free-settling zone (\Cref{sec:fixed}).
\item We prove an ISS Lyapunov inequality for the distributed layer on
$L^2\times H^1\times L^2\times L^2$, with respect to the interface error, the
outgoing traces at the interface, the input and its derivative; the $H^1$
component is imposed by the Jordan block (\Cref{sec:iss}).
\item We build a forwarding transformation $w=\eta-\Lk[Y]$ whose kernel $\Lk$
solves a Sylvester ODE with boundary matching conditions. It exists under a scalar non-resonance condition and a non-degeneracy
condition, and its parameter is the root $c^*$ of a
scalar equation (\Cref{sec:fwd}). The feedback $u=\mathfrak{K}w$ gives
$\dot w=-\kappa w$ without any smallness assumption, once the kernel exists
(\Cref{ass:fwd}); exponential stability of
the layer and the interface together holds under a small-gain condition
(\Cref{thm:fwd_GES}). The control is $C^\infty$ in time and uses neither a
trace nor a spatial derivative of the state, but it uses the whole
distributed state.
\item On an illustrative example where every hypothesis is
checked, the pole $c^*$ coincides, to the solver tolerance, with a zero of the
open-loop characteristic determinant, computed without the kernel equations
(\Cref{sec:num}).
\end{enumerate}
The stability certificate is a statement about the linearized loop and about
its decay rate: the small-gain condition is sufficient, it holds on the
example only for stiff and not too thick beds, and the transient constant it produces is
large (\Cref{ssec:feasibility}). The same example shows that the admissible
class is small exactly where the certificate holds (\Cref{ssec:design}); this
tension is discussed in \Cref{sec:conclusion}.

We now compare our work with the literature.
\begin{itemize}[leftmargin=1.5em,itemsep=2pt]
\item \emph{Shock steady states.} Bastin, Coron, Hayat and Shang stabilize the
position of a shock for the inviscid Burgers
equation~\cite{bastin2019exponential} and of a hydraulic
jump~\cite{bastin2019boundary}; in both cases the two sides of the shock are of
the same type. Comparable situations occur in traffic flow with phase
transitions~\cite{colombo2002hyperbolic,goatin2006aw}, the latter built on
the model of~\cite{aw2000resurrection}.
\item \emph{Shocks as free boundaries.} Shock fronts treated as free-boundary
problems with piecewise smooth solutions go back to
Majda~\cite{majda1983existence} and M\'etivier~\cite{metivier2001stability},
and in one space dimension to Li and Yu~\cite{li1985boundary};
\appwp{} follows this approach. Non-smooth solutions are otherwise treated in
$BV$~\cite{dafermos2005hyperbolic}: dissipative boundary conditions for
$2\times2$ systems with an entropy condition~\cite{coron2017dissipative},
controllability of scalar laws and of isentropic
Euler~\cite{ancona1998scalar,glass2007controllability}, its
limits~\cite{bressan2002boundary}, and stabilization of scalar
laws~\cite{perrollaz2013asymptotic}. We keep instead a single interface and
classical solutions on each side, which allows a Lyapunov design.
\item \emph{Stefan problems.} A fixed-domain reformulation also gives an
interface driven by traces~\cite{koga2020control}, but the phase boundary
separates two parabolic regions: the interface condition is a Stefan condition,
not a Rankine--Hugoniot pair, and no incoming characteristic has to be
selected.
\item \emph{Lyapunov methods.} The inequality of \Cref{sec:iss} follows the
strict-Lyapunov approach~\cite{coron2007strict,bastin2016stability,coron2015dissipative}
and the ISS framework of~\cite{karafyllis2019input,prieur2012iss}.
\item \emph{Control of sedimentation.} Feedback regulation of continuous
sedimentation has been studied on the scalar model by Barton, Li and
Spencer~\cite{barton1992control}, who regulate the bed level of a thickener
through the feed and underflow fluxes, and by
Diehl~\cite{diehl2001operating,diehl2008regulation}, whose operating charts
and regulator are built on the entropy solutions of the scalar law.
Our conference papers \cite{amirat2024stability,amirat2025boundary} treat the
stability and the boundary control of the linear $2\times2$ weakly hyperbolic
block on a fixed domain, without interface.
\end{itemize}
A shock separating a weakly and a strictly hyperbolic zone, its
well-posedness and the forwarding design for it are new. To our
knowledge, this is the first study of the $2\times2$ momentum-balance model
with the interface as a controlled shock.

We use the following notation. For a function $g$ that is discontinuous at
$z=z_s(t)$ we write
$g^{\pm}(t)=\lim_{z\to z_s(t)^{\pm}}g(t,z)$ and denote by
$\jump{g}(t)=g^{+}(t)-g^{-}(t)$ the \textit{jump} of $g$ across $z_s(t)$. Primes denote the derivative with respect to $\epss$ for the
constitutive laws $\sigma_e$ and $\mathcal{R}$, and $\partial_x$ for functions
of the reference space variable $x$ alone (\Cref{sec:fixed}). For a symmetric matrix $M$, $M\succ0$ means positive
definite. $\Vc$ always denotes the Lyapunov functional of
\Cref{def:Lyapunov}.

\section{Problem statement}
\label{sec:model}

\subsection{A \texorpdfstring{$2\times2$}{2x2} model of continuous sedimentation}
Let $\Om=(0,z_b)$ denote the clarifier depth interval, $z=0$ the feed inlet and
$z=z_b$ the underflow outlet. We present a one-dimensional, two-phase flow model describing continuous sedimentation in clarifiers. For the scenarios considered, the interface is assumed  to remain below the feed inlet. The natural state variables to be considered are the \textit{solid volume fraction}
$\epss:\Rp\times\bar\Om\to[0,1)$, along with the \textit{solid flux} $f_s=\epss v_s$, where $v_s$ represents
the average macroscopic particle velocity. Hence, writing mass and momentum balances yields the following $2 \times 2$ \textit{nonlinear} system of balance laws (see 
\cite{valentin2023knowledge} for model derivation):
\begin{equation}\label{eq:model}
\left\{
\begin{aligned}
&\partial_t\epss+\partial_zf_s=0,\\
&\partial_tf_s+\partial_zF(\epss,f_s)=S_0(\epss,f_s)+\Psi_0(\epss)\,v_m(t),
\end{aligned}\right.
\end{equation}
with
\begin{equation}\label{eq:FSP}
  F(\epss,f_s)=\frac{f_s^2}{\epss}+\sigma_e(\epss),\quad
  S_0(\epss,f_s)=K\epss-\mathcal{R}(\epss)f_s,\quad
  \Psi_0(\epss)=\mathcal{R}(\epss)\,\epss .
\end{equation}
Here $K=g(\rho_s-\rho_f)/\rho_s>0$  is the apparent gravity, with $\rho_s$,
$\rho_f$ the solid and fluid densities,  and $v_m(t)=Q_u(t)/A_{\rm set}$, with $Q_u$ the underflow rate and
$A_{\rm set}$ the cross-sectional area of the settler, is the underflow
velocity, which is the control input. The mixture properties are characterized by the constitutive
laws $\sigma_e$ and $\mathcal{R}$, which depend on a critical volume fraction threshold $\eps_c\in(0,1)$
(see \cite{landman1994solid}) as follows.
The \textit{effective interparticle stress} satisfies
$\sigma_e\in C^0([0,1))\cap C^2([\eps_c,1))$, the derivatives at $\eps_c$
being one-sided, with
\begin{equation}\label{eq:sigmae}
  \sigma_e\equiv0 \text{ on } [0,\eps_c],\qquad
  \sigma_e'>0 \text{ on } (\eps_c,1),\qquad
  L^{+}\triangleq\lim_{\epss\to\eps_c^{+}}\sigma_e'(\epss)>0,
\end{equation}
and \textit{the drag resistance} satisfies $\mathcal{R}\in C^0([0,1))\cap C^1([\eps_c,1))$,
again with a one-sided derivative at $\eps_c$, with
$\mathcal{R}\equiv\gamma>0$ on $[0,\eps_c]$ and $\mathcal{R}'>0$ on
$(\eps_c,1)$.

The suspension is fed at the inlet with the solid fraction
$\epsin(t)\in(0,\eps_c)$, a given function of time which is not controlled.
The boundary conditions are, at the feed inlet,
\begin{equation}\label{eq:BCtop}
  \epss(t,0)=\epsin(t),\qquad f_s(t,0)=\epsin(t)\Bigl(v_m(t)+\tfrac{K}{\gamma}\Bigr),
\end{equation}
and, at the underflow outlet,
\begin{equation}\label{eq:BCbot}
  f_s(t,z_b)=\epss(t,z_b)\,v_m(t).
\end{equation}
 Since $\mathcal{R}(\epsin)=\gamma$, the feed condition \eqref{eq:BCtop} means
  that the source term vanishes at the inlet:
  $S_0(\epsin,f_s(t,0))+\Psi_0(\epsin)v_m(t)=0$.

Since we are interested in discontinuous shock solutions of system \eqref{eq:model}--\eqref{eq:FSP}, we emphasize that the system is  written  in \textit{conservative variables} and in divergence
form, which fixes the propagation speed of the shocks we are interested in.
It can be expressed in the quasi-linear form $\partial_ty+A(y)\partial_zy=S(y,v_m)$, where
$y=(\epss,f_s)^\top$, $S(y,v_m)=\bigl(0,\,S_0(y)+\Psi_0(\epss)v_m\bigr)^\top$,
and the flux Jacobian is
\begin{equation}\label{eq:jac}
  A(y)=\begin{pmatrix}0&1\\ \sigma_e'(\epss)-v_s^2 & 2v_s\end{pmatrix}.
\end{equation}
Since $\sigma_e$ is active only above $\eps_c$, the hyperbolic  type of
\eqref{eq:model}--\eqref{eq:FSP} depends on $\epss$:  the
eigenvalues of $A(y)$ (characteristic velocities) are
\begin{equation*}
    \lambda_1 = \lambda_2 = v_s, \quad \text{if} \; \varepsilon_s < \varepsilon_c, 
\end{equation*}
\begin{equation*}
    \lambda_3 = v_s + \sqrt{\sigma_e^{\prime}(\varepsilon_s)}, \quad \lambda_4 = v_s - \sqrt{\sigma_e^{\prime}(\varepsilon_s)}, \quad \text{if} \; \varepsilon_s > \varepsilon_c.
\end{equation*}
In the free-settling zone ($\sigma_e \equiv 0$), the eigenspace of $A(y)$ is
spanned by $(1,v_s)^\top$ alone:  $A(y)$ is not diagonalizable.
Therefore, the system is strictly hyperbolic in the compression zone and only \textit{weakly
hyperbolic} in the free-settling zone, with a transition occurring at $\eps_c$.
 \begin{remark}[Sed number]\label{rem:sed}
On the compression zone, $\lambda_3=v_s+\sqrt{\sigma_e'(\epss)}$ and
$\lambda_4=v_s-\sqrt{\sigma_e'(\epss)}$. When $v_s>0$, $\lambda_3>0$, and
$\lambda_4<0$ if and only if
\begin{equation}\label{sed_number}
  \mathrm{Sed}\triangleq\frac{v_s}{\sqrt{\sigma_e'(\epss)}}<1 .
\end{equation}
This  ratio plays the role of the Froude number for the
Saint-Venant equations and of the Mach number for the Euler equations. For a stationary admissible profile, \eqref{sed_number} is
\Cref{def:admissible}(ii) with $\dot z_s=0$ (see \eqref{eq:subcritical}).
\end{remark}

\subsection{Admissible interface solutions}

By \eqref{eq:sigmae}, $\sigma_e'$ jumps from $0$ to $L^+$ at $\eps_c$: the
flux is only Lipschitz there, and the system changes type. We now show that
this requires a more general notion of solution. For $\epss>0$ the conservative and
primitive variables are equivalent for $C^1$ solutions, and  in the
primitive variables $(\epss,v_s)$ the
smooth dynamics read
$\partial_t\epss+v_s\partial_z\epss+\epss\partial_zv_s=0$ and
$\partial_tv_s+v_s\partial_zv_s+\frac{\sigma_e'(\epss)}{\epss}\partial_z\epss
=K-\mathcal{R}(\epss)(v_s-v_m)$.

\begin{theorem}[$C^1$ solutions cannot cross $\varepsilon_c$ with a
non-zero gradient]
\label{thm:breakdown}
Let $(\epss,v_s)\in[C^1(\mathcal{W})]^2$, with $\epss>0$, be a classical
solution on an open set $\mathcal{W}\subset(\Rp\setminus\{0\})\times\Om$, the equations being
understood pointwise on $\{\epss\neq\eps_c\}$, where the flux is $C^1$.
There is no point
$(t^*,z^*)\in\mathcal{W}$ at which
$\epss(t^*,z^*)=\eps_c$ and $\partial_z\epss(t^*,z^*)\neq0$.
\end{theorem}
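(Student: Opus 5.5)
Argue by contradiction. Suppose $(t^*,z^*)\in\mathcal{W}$ satisfies $\epss(t^*,z^*)=\eps_c$ and $\partial_z\epss(t^*,z^*)\neq0$. Since $\epss\in C^1$ and its $z$-derivative does not vanish there, the implicit function theorem gives a neighbourhood $\mathcal{N}\subset\mathcal{W}$ of $(t^*,z^*)$ and a $C^1$ curve $z=\zeta(t)$ with $\{\epss=\eps_c\}\cap\mathcal{N}=\{z=\zeta(t)\}$. On one side of this curve $\epss>\eps_c$ and on the other $\epss<\eps_c$. So $\{\epss\neq\eps_c\}\cap\mathcal{N}$ is open and dense in $\mathcal{N}$, and the equations hold pointwise on both sides.

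Look at the momentum equation in primitive form,
\begin{equation*}
\partial_tv_s+v_s\partial_zv_s+\frac{\sigma_e'(\epss)}{\epss}\partial_z\epss=K-\mathcal{R}(\epss)(v_s-v_m).
\end{equation*}
Every term except the pressure term is continuous on all of $\mathcal{N}$. This uses $v_s,\epss\in C^1$, $\mathcal{R}\in C^0$, and $v_m$ as a function of $t$ alone, which I take continuous; the statement of the theorem implicitly requires it. Hence
\begin{equation*}
P(t,z)\triangleq\frac{\sigma_e'(\epss)}{\epss}\partial_z\epss
\end{equation*}
equals a continuous function on $\{\epss\neq\eps_c\}\cap\mathcal{N}$, and therefore has the same one-sided limits at $(t^*,z^*)$ from both sides of the curve.

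Now compute these limits from \eqref{eq:sigmae}. On the free-settling side $\sigma_e'\equiv0$, so $P\equiv0$ and its limit is $0$. On the compression side $\sigma_e'(\epss)\to L^+$ as $\epss\to\eps_c^+$, so $P\to L^+\,\partial_z\epss(t^*,z^*)/\eps_c$. This limit is non-zero, because $L^+>0$, $\eps_c>0$ and $\partial_z\epss(t^*,z^*)\neq0$. The two limits differ, which is the contradiction.

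The main point needing care is that both one-sided regions actually accumulate at $(t^*,z^*)$. The non-zero gradient is what guarantees this: along $t=t^*$ we have $\epss(t^*,z)-\eps_c\sim\partial_z\epss(t^*,z^*)(z-z^*)$, which changes sign. So we may take limits $z\to z^{*\pm}$ with $t=t^*$ fixed, and this avoids any need for the curve description. A second point is that the mass equation gives no information, since its flux is smooth. The obstruction comes entirely from the jump of $\sigma_e'$ multiplying a non-vanishing gradient. This is also why the statement is sharp: if $\partial_z\epss=0$ at such a point, both limits vanish and no contradiction arises.
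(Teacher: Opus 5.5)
Your proof is correct and follows the paper's argument. The paper also takes the one-sided limits of the primitive momentum equation at $(t^*,z^*)$ from within $\mathcal{W}\cap\{\epss<\eps_c\}$ and $\mathcal{W}\cap\{\epss>\eps_c\}$, using the sign change of $\epss-\eps_c$ along $t=t^*$ to ensure that both regions accumulate at the point, and it derives the contradiction $L^{+}\partial_z\epss(t^*,z^*)/\eps_c=0$ (the implicit-function-theorem description of the level set is, as you note yourself, not needed).
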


\begin{proof}
Write the momentum equation in the primitive variables $(\epss,v_s)$,
\begin{equation}\label{eq:brk_mom}
  \partial_tv_s+v_s\partial_zv_s
  +\frac{\sigma_e'(\epss)}{\epss}\,\partial_z\epss
  =K-\mathcal{R}(\epss)\bigl(v_s-v_m\bigr),
\end{equation}
and let $(t,z)\to(t^*,z^*)$ within each of
\begin{equation}\label{eq:brk_sides}
  \mathcal{W}^{-}=\mathcal{W}\cap\{\epss<\eps_c\},\qquad
  \mathcal{W}^{+}=\mathcal{W}\cap\{\epss>\eps_c\},
\end{equation}
both of which meet every neighbourhood of $(t^*,z^*)$ because
$\partial_z\epss(t^*,z^*)\neq0$: along the line $t=t^*$, $\epss$ crosses
$\eps_c$ at $z^*$. Since $(\epss,v_s)\in[C^1(\mathcal{W})]^2$, every term of
\eqref{eq:brk_mom} has a limit at $(t^*,z^*)$ along either side, and the two
limits agree for all of them but the third, for which \eqref{eq:sigmae} gives
\begin{equation}\label{eq:brk_limits}
\lim_{\substack{(t,z)\to(t^*,z^*)\\(t,z)\in\mathcal{W}^{-}}}
  \frac{\sigma_e'(\epss)}{\epss}\,\partial_z\epss=0,
  \qquad
  \lim_{\substack{(t,z)\to(t^*,z^*)\\(t,z)\in\mathcal{W}^{+}}}
  \frac{\sigma_e'(\epss)}{\epss}\,\partial_z\epss
  =\frac{L^{+}}{\eps_c}\,\partial_z\epss(t^*,z^*).
\end{equation}
Subtracting the two one-sided limits of \eqref{eq:brk_mom} therefore yields
\begin{equation}\label{eq:brk_contra}
  \frac{L^{+}}{\eps_c}\,\partial_z\epss(t^*,z^*)=0,
\end{equation}
and $L^{+}>0$ forces $\partial_z\epss(t^*,z^*)=0$, contradicting the
hypothesis.
\end{proof}

A $C^1$ solution can therefore reach $\eps_c$ only with a flat profile: a
compression zone that forms inside a free-settling suspension with a non-flat
front cannot be described by a $C^1$ solution.

We therefore work with piecewise classical solutions separated by a moving
interface, under algebraic admissibility conditions.

\begin{definition}[Admissible interface solution]\label{def:admissible}
Let $T>0$, $v_m\in C^2([0,T);\Rp)$ and $\epsin\in C^1([0,T);(0,\eps_c))$. A state
$y=(\epss,f_s)^\top$ is an \emph{admissible interface solution} of
\eqref{eq:model}--\eqref{eq:BCbot} if it is a weak solution of
\eqref{eq:model} in $(0,T)\times\Om$ that satisfies
\eqref{eq:BCtop}--\eqref{eq:BCbot} for all $t\in[0,T)$, and there is a curve
$z_s\in C^1([0,T);(0,z_b))$ splitting $\Om$ into $\Om_1(t)=(0,z_s(t))$ and
$\Om_2(t)=(z_s(t),z_b)$, on each of which $y$ is a classical $C^1$ solution up
to the boundary, with the additional regularity $\partial_z^2v_s\in C^0$ on the closure of
the free-settling zone $\Om_1(t)$, $v_s=f_s/\epss$ being the particle velocity,
and if for all $t\in[0,T)$ the following hold:
\begin{enumerate}[label=(\roman*),leftmargin=2em,itemsep=1pt]
\item \emph{phase separation:}
  $0<\epss<\eps_c$ on $\Om_1(t)$, $\eps_c<\epss<1$ on $\Om_2(t)$, and
  $\epss(t,z_s(t)^{-})<\eps_c<\epss(t,z_s(t)^{+})$;
\item \emph{kinematic admissibility:}
  $\inf_{\Om_1(t)}\bigl(v_s-\tfrac{z}{z_s}\dot z_s\bigr)>0$ and, on
  $\Om_2(t)$,
  $\inf\bigl(v_s+\sqrt{\sigma_e'}-\tfrac{z_b-z}{z_b-z_s}\dot z_s\bigr)>0$,
  $\sup\bigl(v_s-\sqrt{\sigma_e'}-\tfrac{z_b-z}{z_b-z_s}\dot z_s\bigr)<0$.
\end{enumerate}
\end{definition}

Condition (ii) says how the four characteristics meet the interface. At
$z_s(t)$ it reads $v_s^->\dot z_s$ and $\lambda_4^+<\dot z_s<\lambda_3^+$. Above the interface the two waves travel at the same speed $v_s$,  larger than $\dot z_s$, so both reach it. Below it $\lambda_3>\dot z_s$ and $\lambda_4<\dot z_s$, so the first moves away and the second reaches it. Three characteristics arrive and one leaves, which is the count of the Lax condition for a shock of the slow
family~\cite{lax1973hyperbolic,dafermos2005hyperbolic}, stated there for
strictly hyperbolic systems; here the state above the interface is in the
weakly hyperbolic zone, and only this count is used (\appwp{}). The infima in (ii) keep every characteristic speed away from the speed of the
interface, so that the boundaries are non-characteristic (\appwp{}).

\begin{remark}[Loss of one derivative due to the Jordan block]\label{rem:derivloss}
On the free-settling zone $\mathcal{R}=\gamma$ and $\sigma_e=0$, so in
primitive variables
\begin{equation*}
  \partial_tv_s+v_s\partial_zv_s=K-\gamma(v_s-v_m),\qquad
  \partial_t\epss+v_s\partial_z\epss=-\epss\,\partial_zv_s .
\end{equation*}
The first equation is closed; the second is a transport equation whose source
contains $\partial_zv_s$. Let $X(t;\alpha)$ be the characteristic,
$\partial_tX=v_s(t,X)$, $X(0;\alpha)=\alpha$, and $p=\partial_zv_s$.
Differentiating the first equation in $z$ gives $\frac{\dd}{\dd t}p(t,X)=-p^2-\gamma p$
along $X$, and the second equation then gives (\Cref{lem:app_char})
\begin{equation}\label{eq:derivloss}
  \epss\bigl(t,X(t;\alpha)\bigr)=\epss(0,\alpha)\,
  \frac{\gamma}{\gamma+\partial_zv_s(0,\alpha)\bigl(1-e^{-\gamma t}\bigr)} .
\end{equation}
Hence $\epss(t,\cdot)\in C^1$ requires $\partial_zv_s(0,\cdot)\in C^1$, that
is, $\partial_z^2v_s\in C^0$. At the inlet, $v_s(t,0)=v_m(t)+K/\gamma$ by
\eqref{eq:BCtop}, and the velocity equation gives
\begin{equation*}
  \partial_zv_s(t,0)=-\frac{\dot v_m(t)}{v_s(t,0)},
\end{equation*}
so $\partial_z^2v_s(t,0)$ contains $\ddot v_m(t)$, and \Cref{thm:wp} needs
$v_m\in C^2$. This is the nonlinear counterpart of the $H^1$ component of
\eqref{eq:H}. In the linearized closed loop, $u$ is $C^\infty$
(\Cref{thm:fwd_GES}(ii)); we do not apply \Cref{thm:wp} to the closed loop
in this paper (see \Cref{ssec:nonlinear}).
\end{remark}

\begin{lemma}[Rankine--Hugoniot]\label{lem:RH}
Along an admissible interface solution,
\begin{equation}\label{eq:RH}
  \jump{\epss}\dot z_s=\jump{f_s},\qquad
  \jump{f_s}\dot z_s=\jump{F},
\end{equation}
and since $\jump{\epss}>0$ these determine
$\dot z_s=\jump{f_s}/\jump{\epss}$ and
$\jump{f_s}^2=\jump{\epss}\jump{F}$.
\end{lemma}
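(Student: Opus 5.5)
The plan is to derive \eqref{eq:RH} from the weak formulation of \eqref{eq:model}, using the fact that $y$ is classical on each side of the curve $z=z_s(t)$. Take a test function $\varphi\in C_c^\infty((0,T)\times\Om)$ whose support meets the interface. The weak form of each balance law $\partial_t u+\partial_z G(u)=s$ is
\begin{equation*}
\iint\bigl(u\,\partial_t\varphi+G\,\partial_z\varphi+s\,\varphi\bigr)\dd z\dd t=0 .
\end{equation*}
For the first equation $(u,G,s)=(\epss,f_s,0)$. For the second $(u,G,s)=(f_s,F,S_0+\Psi_0 v_m)$, and the source lies in $L^\infty$ on each side.

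First, split the integral into the two regions $\{z<z_s(t)\}$ and $\{z>z_s(t)\}$. On each region $y$ is $C^1$ up to the boundary by \Cref{def:admissible}, so the divergence theorem applies. The interior terms vanish because the equations hold pointwise there. On the free-settling side one checks that $F=f_s^2/\epss$ is $C^1$, since $\sigma_e\equiv0$. On the compression side one uses $\sigma_e\in C^2([\eps_c,1))$.

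Second, parametrize the boundary by $t\mapsto(t,z_s(t))$, which is legitimate because $z_s\in C^1$. The outward normal of $\{z<z_s\}$ is proportional to $(-\dot z_s,1)$. The surviving boundary terms therefore give
\begin{equation*}
\int_0^T\bigl(-\jump{u}\dot z_s+\jump{G}\bigr)\varphi(t,z_s(t))\dd t=0 .
\end{equation*}
The traces $\varphi(t,z_s(t))$ can be an arbitrary $C_c^\infty$ function of $t$. Since the bracket is continuous in $t$, we get $\jump{u}\dot z_s=\jump{G}$ pointwise. This yields both relations in \eqref{eq:RH}. The boundaries $z=0$ and $z=z_b$ play no role, because $\varphi$ is compactly supported in the interior.

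Third, phase separation (\Cref{def:admissible}(i)) gives $\epss^-<\eps_c<\epss^+$, hence $\jump{\epss}>0$. The first relation then gives $\dot z_s=\jump{f_s}/\jump{\epss}$. Substituting this into the second relation and multiplying by $\jump{\epss}$ gives $\jump{f_s}^2=\jump{\epss}\jump{F}$.

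The only delicate point is justifying the integration by parts up to the moving boundary. This requires the one-sided traces of $y$ and of $F(y)$ to exist, which follows from the $C^1$-up-to-the-boundary regularity and from $\epss^\pm$ lying in open ranges where $F$ is continuous. I would handle it by approximating each region with the sets $\{z<z_s(t)-\delta\}$ and $\{z>z_s(t)+\delta\}$ and letting $\delta\to0$ by uniform continuity.
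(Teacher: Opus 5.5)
Your proposal is correct and follows the same route as the paper, which gives no argument of its own and cites the standard derivation in Godlewski--Raviart (Ch.~1). That derivation is the one you write out: integrate the weak form on each side of the $C^1$ curve $z=z_s(t)$, use the classical equations there, read $\jump{u}\dot z_s=\jump{G}$ off the boundary terms, and divide by $\jump{\epss}>0$, which phase separation provides.
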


\begin{proof}
See e.g.\ \cite[Ch.~1]{godlewski_raviart_2021}.
\end{proof}

\subsection{Well-posedness}
\label{ssec:wp}

We first check that \Cref{def:admissible} defines a solvable
initial--boundary value problem.  Two sets of conditions are needed: one on the initial state, which must
already have the structure of an interface solution, and one linking the
boundary inputs to the initial data at $t=0$.

\begin{assumption}[Initial structural consistency]\label{ass:wp}
The initial state $y_0=(\eps_{s,0},f_{s,0})^\top$ is piecewise $C^1$ with a
single discontinuity at $z_s(0)\in(0,z_b)$, splitting $\Om$ into
$\Om_{1,0}=(0,z_s(0))$ and $\Om_{2,0}=(z_s(0),z_b)$, with
$v_{s,0}=f_{s,0}/\eps_{s,0}\in C^2(\overline{\Om_{1,0}})$ on the free-settling
zone, and with
$\dot z_{s,0}\triangleq\jump{f_{s,0}}/\jump{\eps_{s,0}}$ it satisfies
\begin{enumerate}[label=(\roman*),leftmargin=2em,itemsep=1pt]
\item \emph{initial phase separation:}
  $0<\eps_{s,0}<\eps_c$ on $\Om_{1,0}$ and $\eps_c<\eps_{s,0}<1$ on
  $\Om_{2,0}$, with $\eps_{s,0}(z_s(0)^{-})<\eps_c<\eps_{s,0}(z_s(0)^{+})$;
\item \emph{initial kinematic admissibility:} the three strict inequalities of
  \Cref{def:admissible}(ii) hold at $t=0$ with $\dot z_s$ replaced by
  $\dot z_{s,0}$;
\item \emph{initial Rankine--Hugoniot:} the traces of $y_0$ lie on the
  Hugoniot locus of \Cref{lem:RH},
  $\jump{f_{s,0}}^2=\jump{\eps_{s,0}}\jump{F(y_0)}$.
\end{enumerate}
\end{assumption}

\begin{definition}[$C^2$-compatibility]\label{def:compat}
The inputs $v_m\in C^2(\Rp;\Rp)$ and $\epsin\in C^1(\Rp;(0,\eps_c))$ are
\emph{$C^2$-compatible} with $y_0$ if, at $t=0$, the traces of $y_0$ match the
boundary conditions \eqref{eq:BCtop}--\eqref{eq:BCbot},
\begin{equation}\label{eq:compat0}
  \eps_{s,0}(0)=\epsin(0),\quad
  f_{s,0}(0)=\epsin(0)\Bigl(v_m(0)+\tfrac{K}{\gamma}\Bigr),\quad
  f_{s,0}(z_b)=\eps_{s,0}(z_b)v_m(0),
\end{equation}
and their first time derivatives match the dynamics evaluated at the
boundaries,
\begin{equation}\label{eq:compat1}
\begin{gathered}
  \dot\epsin(0)=\partial_t\epss(0,0),\qquad
  \dot\epsin(0)\Bigl(v_m(0)+\tfrac{K}{\gamma}\Bigr)+\epsin(0)\dot v_m(0)
  =\partial_tf_s(0,0),\\
  \partial_tf_s(0,z_b)=\partial_t\epss(0,z_b)\,v_m(0)
  +\eps_{s,0}(z_b)\,\dot v_m(0),
\end{gathered}
\end{equation}
the initial time derivatives being computed from $y_0$ through
\eqref{eq:model}, and if, at the inlet corner,  the second-order condition
\begin{equation}\label{eq:compat2}
  \ddot v_m(0)=v_{s,0}^2\,\partial_z^2v_{s,0}
  +v_{s,0}\,\partial_zv_{s,0}\bigl(\partial_zv_{s,0}+\gamma\bigr)
  -\dot v_m(0)\,\partial_zv_{s,0}\quad\text{at }z=0
\end{equation}
holds, where $v_{s,0}=f_{s,0}/\eps_{s,0}$. This condition is the second
time derivative at $t=0$ of $f_s(t,0)/\epss(t,0)=v_m(t)+K/\gamma$, computed with
the velocity equation $\partial_tv_s+v_s\partial_zv_s=K-\gamma(v_s-v_m)$ of
the free-settling zone, and is a constraint on $\partial_z^2v_{s,0}(0)$.
Finally, with $H\triangleq\jump{f_s}^2-\jump{\epss}\jump{F}$, which vanishes
at $t=0$ by \Cref{ass:wp}(iii), the interface condition
\begin{equation}\label{eq:compat_int}
  \frac{\dd}{\dd t}H\Big|_{t=0}=0
\end{equation}
holds, the time derivatives of the four traces being computed from $y_0$
through \eqref{eq:model} in the frame moving at the speed $\dot z_{s,0}$.
\end{definition}

The \emph{margin function} appearing in \Cref{thm:wp} below collects the
norms of the state and the inverses of the quantities that must not vanish
for the construction to work,
\begin{equation}\label{eq:margin}
\begin{aligned}
  \Nc_0(t)&\triangleq
  \sum_{i=1}^{2}\bigl\|y(t,\cdot)\bigr\|_{C^0(\overline{\Om_i(t)})}
  +\frac{1}{\jump{\epss}(t)}+\frac{1}{m_{\rm kin}(t)}+\frac{1}{m_{\rm phase}(t)}\\
  &\qquad+\frac{1}{\mathrm{dist}\bigl(z_s(t),\partial\Om\bigr)}
  +\frac{1}{\inf_{\Om}\epss(t,\cdot)}
  +\frac{1}{1-\sup_{\Om}\epss(t,\cdot)},\\
  \Nc(t)&\triangleq\Nc_0(t)
  +\sum_{i=1}^{2}\bigl\|\partial_zy(t,\cdot)\bigr\|_{C^0(\overline{\Om_i(t)})}
  +\bigl\|\partial_z^2v_s(t,\cdot)\bigr\|_{C^0(\overline{\Om_1(t)})},
\end{aligned}
\end{equation}
where the norms on $\overline{\Om_i(t)}$ use the one-sided traces at
$z_s(t)$, and the three geometric margins are
\begin{equation}\label{eq:margins}
\begin{aligned}
  m_{\rm kin}&\triangleq\min\Bigl\{
    \inf_{\Om_1(t)}\Bigl(v_s-\tfrac{z}{z_s}\dot z_s\Bigr),\
    \inf_{\Om_2(t)}\Bigl(\sqrt{\sigma_e'(\epss)}
    -\Bigl|v_s-\tfrac{z_b-z}{z_b-z_s}\dot z_s\Bigr|\Bigr)\Bigr\},\\
  m_{\rm phase}&\triangleq\min\Bigl\{
    \eps_c-\sup_{\Om_1(t)}\epss(t,\cdot),\
    \inf_{\Om_2(t)}\epss(t,\cdot)-\eps_c\Bigr\},\qquad
  \mathrm{dist}\bigl(z_s,\partial\Om\bigr)\triangleq\min\{z_s,\,z_b-z_s\}.
\end{aligned}
\end{equation}
The two terms of $m_{\rm kin}$ are the margins in the three inequalities of
\Cref{def:admissible}(ii), since $\min\{c+a,c-a\}=c-|a|$: they bound the
characteristic speeds relative to the affine motion of each zone away from
zero.
The last two terms of $\Nc_0$ keep $\epss$ away from $0$, where the flux
Jacobian \eqref{eq:jac} is unbounded at fixed $f_s$, and from $1$, near
which $\sigma_e''$ and $\mathcal{R}'$ need not be bounded.

\begin{theorem}[Well-posedness of admissible interface solutions]\label{thm:wp}
Let $y_0$ satisfy \Cref{ass:wp}, and let the inputs $v_m\in C^2(\Rp;\Rp)$ and
$\epsin\in C^1(\Rp;(0,\eps_c))$  be $C^2$-compatible with $y_0$ (\Cref{def:compat}), with
\begin{equation*}
  \|v_m\|_{C^2(\Rp)}+\|\epsin\|_{C^1(\Rp)}<\infty,\qquad
  0<\inf_{\Rp}\epsin\le\sup_{\Rp}\epsin<\eps_c .
\end{equation*}
Then there is a maximal time $T_{\max}\in(0,+\infty]$ and a unique admissible
interface solution of \eqref{eq:model}--\eqref{eq:BCbot} on $[0,T_{\max})$
(\Cref{def:admissible}) with $y(0,\cdot)=y_0$ and interface starting at
$z_s(0)$; moreover $z_s\in C^2([0,T_{\max}))$. If
$T_{\max}<\infty$, then $\limsup_{t\to T_{\max}^-}\Nc(t)=\infty$, and there is
a sequence $t_n\to T_{\max}^-$ along which one of the following occurs:
\begin{enumerate}[label=(\roman*),leftmargin=2.2em,itemsep=1pt]
\item \emph{boundary collision:} $\mathrm{dist}\bigl(z_s(t_n),\partial\Om\bigr)\to0$;
\item \emph{interior gradient blow-up:}  $\max_{i=1,2}\|\partial_zy(t_n,\cdot)\|_{C^0(\overline{\Om_i(t_n)})}\to\infty$;
\item \emph{loss of admissibility:} $m_{\rm phase}(t_n)\to0$ or
  $m_{\rm kin}(t_n)\to0$;
\item \emph{degeneracy of the model:} $\inf_{\Om}\epss(t_n,\cdot)\to0$ or
  $\sup_{\Om}\epss(t_n,\cdot)\to1$.
\end{enumerate}
\end{theorem}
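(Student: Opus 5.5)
We follow the free-boundary approach of Li and Yu~\cite{li1985boundary}: the shock is treated as an unknown boundary and the problem is reduced to a fixed-domain quasilinear hyperbolic problem with nonlocal boundary conditions. Local existence then comes from a fixed-point argument in $C^1$.

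\emph{Step 1 (fixed domain).} We set $x=z/z_s(t)$ on $\Om_1(t)$ and $x=(z-z_s(t))/(z_b-z_s(t))$ on $\Om_2(t)$, so that both zones become $(0,1)$. The transported characteristic speeds are then exactly the quantities appearing in \Cref{def:admissible}(ii), divided by $z_s$ or $z_b-z_s$. Hence $m_{\rm kin}>0$ makes every boundary non-characteristic, with the counts fixed by (ii):
\begin{itemize}[leftmargin=1.5em,itemsep=1pt]
\item at $x=0$ of zone 1, both waves are incoming, and both are prescribed by \eqref{eq:BCtop};
\item at $x=1$ of zone 2, one wave ($\lambda_4$) is incoming, given by \eqref{eq:BCbot};
\item at the interface, three waves arrive and one ($\lambda_3$) leaves into $\Om_2$.
\end{itemize}
The unknowns are then $(y_1,y_2,z_s)$. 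We differentiate the second relation of \Cref{lem:RH} in time, i.e.\ we impose $\dot H=0$, and substitute $\dot z_s=\jump{f_s}/\jump{\epss}$. This gives $\ddot z_s$ as a function of traces and of their $C^1$ derivatives.

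The better choice, which we adopt, is to solve $H=0$ algebraically for the outgoing Riemann invariant at $z_s^+$. This is possible by the implicit function theorem provided $\partial H/\partial(\text{outgoing trace})\neq0$. That derivative should reduce to a nonzero multiple of $(\lambda_3^+-\dot z_s)\jump{\epss}$ or a similar combination, and it is nonzero because of $m_{\rm kin}$ and $1/\jump{\epss}$ in $\Nc_0$. With this choice the interface supplies one boundary condition for zone 2, and $\dot z_s=\jump{f_s}/\jump{\epss}$ is an ODE driven by traces. The compatibility conditions \eqref{eq:compat0}--\eqref{eq:compat_int} are exactly what makes the data $C^1$ at the corners.

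\emph{Step 2 (the Jordan block).} In zone 1 we cannot use Riemann invariants. Following \Cref{rem:derivloss}, we work in primitive variables. The unknown $v_s$ solves a closed scalar transport equation, which we solve in $C^2$ using $v_m\in C^2$ and \eqref{eq:compat2}. Then $\epss$ solves a linear transport equation whose source $-\epss\partial_zv_s$ is $C^1$, so $\epss\in C^1$. The characteristic formula \eqref{eq:derivloss} (\Cref{lem:app_char}) gives explicit bounds. Zone 1 is decoupled from zone 2 except through the moving boundary $z_s$, and it has no incoming condition at $z_s$.

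\emph{Step 3 (contraction).} We work in a ball of $C^1$ functions (with $C^2$ for $v_s$ on zone 1) together with $z_s\in C^1$, on a short interval $[0,\delta]$. Given $(\tilde y,\tilde z_s)$, we solve the linearized transport problems along characteristics. The estimates follow Li--Yu: the $C^0$ contraction uses the $C^1$ bounds, and the $C^1$ bounds are kept uniform for $\delta$ small. Uniqueness follows from the same $C^0$ estimate.

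The admissibility inequalities (i)--(ii) are open conditions, so they persist on short times. We obtain $z_s\in C^2$ because $\dot z_s$ is a $C^1$ function of the traces.

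\emph{Step 4 (continuation).} The existence time $\delta$ depends only on an upper bound for $\Nc(t_0)$ at the restart time $t_0$. So if $\Nc$ stays bounded on $[0,T_{\max})$ with $T_{\max}<\infty$, the solution extends past $T_{\max}$, a contradiction. This gives $\limsup\Nc=\infty$.

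To isolate the four cases, note that $\|y\|_{C^0}$ and $\|\partial_z^2v_s\|_{C^0}$ are controlled by the other terms. The $C^0$ bound comes from the characteristic ODEs with bounded source over finite time. The bound on $\partial_z^2v_s$ comes from the Riccati-type equation for $p=\partial_zv_s$ and its derivative along characteristics, which is linear in $\partial_zp$ once $p$ is bounded. Finally, $1/\jump{\epss}$ is bounded by $1/m_{\rm phase}$, since $\jump{\epss}\ge2m_{\rm phase}$. Whatever remains unbounded is therefore one of (i)--(iv).

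The main obstacle is Step 1 at the interface: verifying the non-degeneracy of $\partial H$ with respect to the single outgoing wave, so that the Rankine--Hugoniot pair becomes one well-posed boundary condition plus the shock ODE. The difficulty is that the upstream side is a Jordan block rather than a strictly hyperbolic state, so the standard Lax-shock argument (the stability condition of Majda/Li--Yu) has to be checked by hand. If the implicit-function route fails, we would instead use the differentiated form $\dot H=0$ as a dynamic boundary condition, propagated from \eqref{eq:compat_int}.
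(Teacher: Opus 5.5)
Your outline is sound and shares most of the architecture of \appwp{}: affine fixed-domain maps (onto $(0,1)$ for you, onto the initial zones in the paper, which makes no difference), Riemann invariants on $\Om_2$, and $H=0$ used as the single condition for the wave $\lambda_3$ entering $\Om_2$. That wave is the paper's incoming invariant $R_2^+$; your ``outgoing'' is relative to the shock. The free-settling zone is solved first along characteristics, with the Riccati equation for $p=\partial_xv_s$ and linear equations for $r$ and $q$. The restart time depends only on a bound of the margin function, and the breakdowns are sorted the same way.

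The routes differ in two places. First, the fixed point. You iterate on the whole state $(y,z_s)$ in the Li--Yu manner. The paper instead freezes only the interface velocity $\zeta$ and solves the frozen quasilinear problem exactly: zone~1 by a closed ODE system along characteristics, zone~2 by Li--Yu with $y^-$ as data. It then contracts the scalar map $\zeta\mapsto\mathcal T(\zeta)$, with Lipschitz constant $\Theta_{RH}\Theta_y\,t$ from \Cref{lem:app_lip}. Because the velocity equation is closed, the difference of $p$ between two frozen solutions is driven only by the differences of $\theta_\zeta,\omega_\zeta$, so the Jordan-block loss of derivative never feeds back into the contraction. The constants also do not depend on $\mathrm{Lip}(\zeta)$, which is what lets the uniqueness argument cover any admissible solution. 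Your scheme closes too, under two conditions. The velocity must be kept out of the iteration (solved exactly, as in your Step~2) or contracted in $C^1$, because a plain $C^0$ contraction for $\epss$ needs $\partial_x$ of the velocity difference. And every iterate must satisfy $\dot{\tilde z}_s(0)=\dot z_{s,0}$, so that the interface corner compatibility \eqref{eq:compat_int} holds; the paper builds this into $\mathcal Z_{\tau,\delta,M'}$.

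Second, the obstacle you flag is settled explicitly. On $H=0$, \eqref{eq:app_KLint} gives $\nabla_{y^+}H\cdot\mathbf r_2=-\jump{\epss}(v_s^++c_s^+-\mathfrak s)^2$, so your guess is right up to a square. Instead of the implicit function theorem, the paper inverts $H=0$ globally on the branch $v_s^->\mathfrak s$ via $\mathcal G_{\epss^-}(\epss^+)=v_s^--R_1^+$ (\Cref{lem:app_h}). This has two consequences:
\begin{itemize}
\item It gives the Lipschitz bound $40M^5$ on all of $A_M$, which is what makes the existence time depend on $M$ only; a local implicit function theorem would need a quantitative version.
\item It shows $v_s^++c_s^+-\mathfrak s\ge c_s^+$ on the whole branch, so non-degeneracy also holds for iterates with $\mathfrak s\neq\dot z_s$. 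By contrast, $m_{\rm kin}$ only controls $\lambda_3^+-\dot z_s$.
\end{itemize}
Your fallback to $\dot H=0$ is therefore unnecessary.

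A minor point: in your $C^0$ bound the source is affine in $v_s$, not bounded. You need Gr\"onwall together with the reflections at both ends of $\Om_2$, as in \Cref{lem:app_ctrl}.
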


The other terms of $\Nc$ give no further scenario: $\jump{\epss}\ge2m_{\rm phase}$,
and the state and $\partial_z^2v_s$ stay bounded as long as the margins and
$\partial_zy$ do (\Cref{lem:app_ctrl}).

The proof is in \appwp{}. \Cref{thm:wp} is a continuation criterion: the
solution exists as long as $\Nc$ stays finite, and items (i)--(iv) list the
ways in which $\Nc$ can blow up. We do not claim that they are the only ways
in which a solution of \eqref{eq:model} can fail.

\subsection{Stationary admissible profiles}
\label{ssec:steady}

Let $\epsin\in(0,\eps_c)$ and $v_m^*>0$ be constant. A stationary admissible
solution $y^*(z)=(\epss^*(z),f_s^*)^\top$ with interface at $z_s^*\in(0,z_b)$ has a
spatially uniform flux $f_s^*$, and \eqref{eq:BCtop} gives
\begin{equation}\label{eq:steadytop}
  f_s^*=\epsin\Bigl(v_m^*+\tfrac{K}{\gamma}\Bigr),\qquad
  \epss^*\equiv\epsin \text{ on } (0,z_s^*),
\end{equation}
$\epsin$ being an equilibrium of the free-settling steady equation. On
$\Om_2^*\triangleq(z_s^*,z_b)$ the profile solves
\begin{equation}\label{eq:steadyODE}
  \frac{d}{dz}F(\epss^*,f_s^*)=S_0(\epss^*,f_s^*)+\Psi_0(\epss^*)v_m^* ,
\end{equation}
with the interface value fixed by the stationary jump $\jump{F^*}=0$,
\begin{equation}\label{eq:steadyRH}
  \frac{(f_s^*)^2}{\epsin}=\frac{(f_s^*)^2}{\epss^*(z_s^{*+})}
  +\sigma_e\bigl(\epss^*(z_s^{*+})\bigr),
\end{equation}
and the bottom condition $\epss^*(z_b)=f_s^*/v_m^*$. We assume throughout
that such a profile exists for the target $z_s^*$; \appsteady{} gives conditions
for its existence.  Admissibility reduces to
$\eps_c<\epss^*<1$ on $[z_s^*,z_b]$ and the strict subcriticality
\begin{equation}\label{eq:subcritical}
  v_s^*(z)<\sqrt{\sigma_e'\bigl(\epss^*(z)\bigr)},\qquad z\in[z_s^{*},z_b],
\end{equation}
with $\epss^*(z_s^*)$ understood as the trace $\epss^*(z_s^{*+})$.
Given $z_s^*$, the pair $(v_m^*,\epss^*)$ is obtained by a \textit{shooting method} on
$v_m^*$, carried out in \appsteady{}, and $\lambda\triangleq f_s^*/\epsin=v_m^*+K/\gamma$ is the
free-settling characteristic speed.

\section{Fixed-domain reformulation and the linearized interconnection}
\label{sec:fixed}

Since the equilibrium is discontinuous at the interface, a small displacement
of the interface is not a small deviation of the state in the physical
variables, and the system cannot be linearized in them. We  first fix the
boundaries by a change of variables. The system then becomes four coupled
quasilinear hyperbolic PDEs, two on each zone,  with an ordinary differential
equation for the interface. It  is then linearized in the usual way.

\subsection{From a moving to a fixed boundary}

Let $\Om_1^*=(0,z_s^*)$, $\Om_2^*=(z_s^*,z_b)$ and define, for each $t$, the
piecewise affine map
\begin{equation}\label{eq:diffeo}
  x=\begin{cases}
    z\,\dfrac{z_s^*}{z_s(t)}, & z\in(0,z_s(t)),\\[8pt]
    z_b-(z_b-z)\dfrac{z_b-z_s^*}{z_b-z_s(t)}, & z\in(z_s(t),z_b).
  \end{cases}
\end{equation}
Since $z_s(t)\in(0,z_b)$, \eqref{eq:diffeo} is, at each $t$, an affine
bijection of each zone onto its reference interval, with the time regularity of
$z_s$, which is $C^2$ by \Cref{thm:wp}.
Unlike in \appwp{}, which maps onto  the initial position $z_s(0)$, the
target  $z_s^*$ is used here, so that the equilibrium profile does not depend
on time in the new variable. Let $y_c^{(i)}(t,x)\triangleq y(t,z)$, where $z\in\Om_i(t)$ and
$x\in\Om_i^*$ are related by \eqref{eq:diffeo}. Then \eqref{eq:model} becomes
\begin{equation}\label{eq:FBVP}
  \partial_ty_c^{(i)}+A_c^{(i)}(x,y_c^{(i)},z_s,\dot z_s)\,\partial_xy_c^{(i)}
  =S_c^{(i)}(y_c^{(i)},v_m),\qquad x\in\Om_i^*,
\end{equation}
where $A$ is the flux Jacobian \eqref{eq:jac},
\begin{equation}\label{eq:Ac}
  A_c^{(1)}=\frac{z_s^*}{z_s}\,A(y_c^{(1)})-\frac{x\,\dot z_s}{z_s}\,\mathbf{I},
  \qquad
  A_c^{(2)}=\frac{z_b-z_s^*}{z_b-z_s}\,A(y_c^{(2)})
  -\frac{(z_b-x)\,\dot z_s}{z_b-z_s}\,\mathbf{I},
\end{equation}
and $S_c^{(i)}(y,v_m)=\bigl(0,\,S_0(y)+\Psi_0(\epss)\,v_m\bigr)^\top$ is the
source of \eqref{eq:model}, which the change of variables does not modify.
The boundary conditions \eqref{eq:BCtop}--\eqref{eq:BCbot} do not change, and
the jump relations \eqref{eq:RH} are now written at the fixed point
$x=z_s^*$. Note that $\dot z_s$ appears in the coefficients of
\eqref{eq:Ac}, and that the equilibrium $y^*$ is smooth on each $\Om_i^*$,
since its jump is at $x=z_s^*$.

\subsection{Linearization and Riemann coordinates}

Let
\begin{equation*}
  u(t)=v_m(t)-v_m^*,\qquad \eta(t)=z_s(t)-z_s^*,\qquad
  \tilde y_c^{(i)}=y_c^{(i)}-y^*,\qquad Y^{(i)}=\mathbf{M}_i\tilde y_c^{(i)},
\end{equation*}
with $\tilde y_c^{(i)}=(\tilde\epss,\tilde f_s)^\top$, $Y^{(1)}=(Y_1,Y_2)^\top$
and $Y^{(2)}=(Y_3,Y_4)^\top$, where the matrices $\mathbf{M}_i$, given below, put the Jacobian in Jordan
form on $\Om_1^*$ and in diagonal form on $\Om_2^*$. The feed fraction
$\epsin$ is kept at its equilibrium value. We linearize \eqref{eq:FBVP} around
$(y^*,z_s^*)$ and write the result in the variables $Y^{(i)}$.

\begin{remark}[Transported and physical deviations]\label{rem:frame}
All deviations below are deviations of the transported state,
\begin{equation*}
  \tilde y_c(t,x)=y_c(t,x)-y^*(x),\qquad y_c(t,x)=y\bigl(t,z(t,x)\bigr),
\end{equation*}
with $z(t,\cdot)$ the inverse of \eqref{eq:diffeo}, and not the physical
deviation $y(t,z)-y^*(z)$. For example, the equilibrium mapped onto a displaced
interface, $y(t,z)=y^*(x(t,z))$ with $z_s\neq z_s^*$, has $\tilde y_c\equiv0$
and $\eta\neq0$, while its physical deviation is of size $\jump{\epss^*}$ on
an interval of length $|\eta|$ next to $z_s^*$. This has two consequences.
First, the interface traces are $y^*(z_s^{*\pm})+\tilde y_c(t,z_s^{*\pm})$, so
$\partial_zy^*$ does not appear in the jump relations: the interface enters
the linearized PDEs only through the coefficients $z_s$ and $\dot z_s$ of
\eqref{eq:FBVP}, that is, through the term $\Cb^{(2)}\xi$ of
\eqref{eq:zone2}. Second, the physical deviation cannot be used to linearize:
to leading order, it differs from $\tilde y_c$ by a quantity of size
$|\eta|^{1/2}\jump{\epss^*}$ in $L^2$, which is not $O(|\eta|)$.
\end{remark}

On the free-settling zone the equilibrium is constant, so the coefficients
are constant.
 With $\mathbf{M}_1=\bigl[\begin{smallmatrix}1&0\\
-\lambda&1\end{smallmatrix}\bigr]$, so that $Y_1=\tilde\epss$ and
$Y_2=\tilde f_s-\lambda\tilde\epss$,
\begin{equation}\label{eq:zone1}
  \partial_tY^{(1)}+
  \underbrace{\begin{bmatrix}\lambda&1\\0&\lambda\end{bmatrix}}_{\Ab^{(1)}}
  \partial_xY^{(1)}
  =\underbrace{\begin{bmatrix}0&0\\0&-\gamma\end{bmatrix}}_{\Bb^{(1)}}Y^{(1)}
  +\underbrace{\begin{bmatrix}0\\ \gamma\epsin\end{bmatrix}}_{\Db^{(1)}}u,
  \qquad \Cb^{(1)}=\mathbf{0},
\end{equation}
where $\Cb^{(1)}$ is the matrix of the terms in $(\eta,\dot z_s)$. It is zero
since  $\partial_xy^*$ is zero on $\Om_1^*$. The matrix
$\Ab^{(1)}$ is a Jordan block: the system is only weakly hyperbolic on this
zone.

On the compression zone, with
$c_s\triangleq\sqrt{\sigma_e'(\epss^*)}$,
$\lambda_3=v_s^*+c_s>0>\lambda_4=v_s^*-c_s$
and $\mathbf{M}_2^{-1}=\bigl[\begin{smallmatrix}1&1\\
\lambda_3&\lambda_4\end{smallmatrix}\bigr]$,
\begin{equation}\label{eq:zone2}
  \partial_tY^{(2)}+\Ab^{(2)}(x)\partial_xY^{(2)}
  =\Bb^{(2)}(x)Y^{(2)}+\Cb^{(2)}(x)\xi+\Db^{(2)}(x)u,
  \qquad \Ab^{(2)}=\mathrm{diag}(\lambda_3,\lambda_4),
\end{equation}
where $\xi=(\eta,\dot z_s)^\top$ collects the two interface quantities coming
from \eqref{eq:Ac}, all coefficients being evaluated at $x$,  and
\begin{equation}\label{eq:C2}
  \Cb^{(2)}(x)=\frac{\partial_x\epss^*(x)}{\lambda_4-\lambda_3}
  \begin{bmatrix}
  -\frac{\lambda_3\lambda_4}{z_b-z_s^*} & \frac{z_b-x}{z_b-z_s^*}\lambda_4\\[4pt]
  \frac{\lambda_3\lambda_4}{z_b-z_s^*} & -\frac{z_b-x}{z_b-z_s^*}\lambda_3
  \end{bmatrix},
  \qquad
  \Db^{(2)}(x)=\frac{\Psi_0(\epss^*)}{2\sqrt{\sigma_e'(\epss^*)}}
  \begin{bmatrix}1\\-1\end{bmatrix},
\end{equation}
\begin{equation}\label{eq:B2}
  \Bb^{(2)}(x)=\frac{1}{\lambda_4-\lambda_3}
  \left(\begin{bmatrix}-1\\ 1\end{bmatrix}
  \begin{bmatrix}\beta_3&\beta_4\end{bmatrix}
  +\begin{bmatrix}\lambda_3\\ -\lambda_4\end{bmatrix}
  \begin{bmatrix}\lambda_3'&\lambda_4'\end{bmatrix}\right),
\end{equation}
\begin{equation}\label{eq:beta34}
  \beta_j=K-\mathcal{R}'(\epss^*)\bigl(f_s^*-\epss^*v_m^*\bigr)
  +\mathcal{R}(\epss^*)\bigl(v_m^*-\lambda_j\bigr)
  -\partial_x\epss^*\Bigl[\sigma_e''(\epss^*)
  +\frac{2v_s^*}{\epss^*}\bigl(v_s^*-\lambda_j\bigr)\Bigr],
  \qquad j=3,4,
\end{equation}

with $v_s^*-\lambda_3=-c_s$ and $v_s^*-\lambda_4=c_s$. The matrix
\eqref{eq:B2} has three contributions: the Jacobian of the source, which gives
the terms in $K$ and $\mathcal{R}$; the derivative of the flux Jacobian along
the profile, $(D_yA[\,\cdot\,])\partial_xy^*$, which gives the bracket in
$\sigma_e''$; and the $x$-dependence of $\mathbf{M}_2$, which gives the second
rank-one term. The last two vanish when $\partial_x\epss^*=0$.

The boundary conditions become
\begin{equation}\label{eq:bc}
  Y_1(t,0)=0,\quad Y_2(t,0)=\epsin u(t),\qquad
  Y_4(t,z_b)=Y_3(t,z_b)+\Db^{(bot)}u(t),
\end{equation}
with $\Db^{(bot)}=\epss^*(z_b)/(\lambda_4(z_b)-v_m^*)=-\epss^*(z_b)/c_s(z_b)<0$,
since $v_s^*(z_b)=v_m^*$; note that the bottom reflection coefficient is
exactly $1$. Since  the feed fraction is fixed, $Y_1=\tilde\epss$ vanishes at
$x=0$, and by \eqref{eq:zone1} it is driven on $\Om_1^*$ only by
$\partial_xY_2$, through the Jordan block.

The jump relations \eqref{eq:RH} give the missing boundary condition, on the
wave $Y_3$ that enters the compression zone at the interface. In
\eqref{eq:FBVP} the interface is at $x=z_s^*$, so the traces are
$y^{\pm}=y^*(z_s^{*\pm})+\tilde y_c^{\pm}$, with
$\tilde y_c^{\pm}\triangleq\tilde y_c(t,z_s^{*\pm})$. At equilibrium,
$\dot z_s=0$, $\jump{f_s^*}=0$ since $f_s^*$ is uniform, and
$\jump{F(y^*)}=0$ by \eqref{eq:steadyRH}. Hence, to first order,
\eqref{eq:RH} gives
\begin{equation}\label{eq:RHlin}
  \jump{\epss^*}\dot\eta=\tilde f_s^{+}-\tilde f_s^{-},\qquad
  \mathrm{d}F(y^{*+})\,\tilde y_c^{+}=\mathrm{d}F(y^{*-})\,\tilde y_c^{-},
\end{equation}
where, by \eqref{eq:FSP},
$\mathrm{d}F(y)\tilde y=(\sigma_e'(\epss)-v_s^2)\tilde\epss+2v_s\tilde f_s$.
On the free-settling side, $\sigma_e'=0$, $v_s^*=\lambda$,
$\tilde\epss=Y_1$ and $\tilde f_s=\lambda Y_1+Y_2$. On the compression side,
$\sigma_e'-v_s^{*2}=-\lambda_3\lambda_4$, $2v_s^*=\lambda_3+\lambda_4$,
$\tilde\epss=Y_3+Y_4$ and $\tilde f_s=\lambda_3Y_3+\lambda_4Y_4$. Hence, at
$x=z_s^*$,
\begin{equation}\label{eq:tracesRiemann}
\begin{aligned}
  \tilde f_s^{-}&=\lambda Y_1+Y_2, &
  \mathrm{d}F(y^{*-})\tilde y_c^{-}&=\lambda^2Y_1+2\lambda Y_2,\\
  \tilde f_s^{+}&=\lambda_3Y_3+\lambda_4Y_4, &
  \mathrm{d}F(y^{*+})\tilde y_c^{+}&=\lambda_3^2Y_3+\lambda_4^2Y_4,
\end{aligned}
\end{equation}
and the second relation of \eqref{eq:RHlin} gives the incoming wave in terms
of the three outgoing traces:
\begin{equation}\label{eq:Kint}
  Y_3(t,z_s^*)=\Kint\,\Yo(t),\qquad
  \Kint=\Bigl[\tfrac{\lambda^2}{\lambda_3^2},\
  \tfrac{2\lambda}{\lambda_3^2},\ -\tfrac{\lambda_4^2}{\lambda_3^2}\Bigr],\qquad
  \Yo\triangleq(Y_1,Y_2,Y_4)(t,z_s^*).
\end{equation}
With \eqref{eq:bc}, the four incoming traces $Y_1(0)$, $Y_2(0)$,
$Y_3(z_s^*)$ and $Y_4(z_b)$ are given by the outgoing ones and by $u$. The
last entry of $\Kint$, $-k^2$ with $k\triangleq|\lambda_4|/\lambda_3<1$, is the
reflection coefficient of the interface for the compression zone.

\subsection{The linearized interface dynamics}
\label{ssec:kinematics}

The first relation of \eqref{eq:RHlin}
reduces the pair $\xi$ to the single state $\eta$: substituting
\eqref{eq:tracesRiemann} and eliminating $Y_3$ by \eqref{eq:Kint} gives a
scalar ODE for the interface,
\begin{equation}\label{eq:kinematics}
  \dot\eta=\Go\,\Yo,\qquad
  \Go=\frac{1}{\jump{\epss^*}}\Bigl[\tfrac{\lambda(\lambda-\lambda_3)}{\lambda_3},\
  \tfrac{2\lambda-\lambda_3}{\lambda_3},\
  \tfrac{\lambda_4(\lambda_3-\lambda_4)}{\lambda_3}\Bigr],
\end{equation}
all coefficients evaluated at $z_s^*$; we write $\Go=(G_{o,1},G_{o,2},G_{o,4})$,
the indices referring to the components of $\Yo$.

This gives
\begin{equation}\label{eq:closedcoupling}
  \Cb^{(2)}\xi=\Cb^{(2)}_\eta\,\eta+\Cb^{(2)}_{:,2}\,\Go\Yo,
  \qquad \Cb^{(2)}_\eta\triangleq\Cb^{(2)}_{:,1}.
\end{equation}

\subsection{The linearized plant}

Collecting \eqref{eq:zone1}--\eqref{eq:closedcoupling}, the system to be
stabilized is
\begin{equation}\label{eq:plant}
\left\{\begin{aligned}
 &\partial_tY^{(i)}+\Ab^{(i)}\partial_xY^{(i)}
  =\Bb^{(i)}Y^{(i)}+\Cb^{(i)}_\eta\eta+\Cb^{(i)}_{:,2}\Go\Yo+\Db^{(i)}u,
  && i=1,2,\\
 &\dot\eta=\Go\Yo,\\
 &\text{boundary conditions \eqref{eq:bc}},
\end{aligned}\right.
\end{equation}
on the energy space
\begin{equation}\label{eq:H}
  \Hs=L^2(\Om_1^*)\times H^1(\Om_1^*)\times L^2(\Om_2^*)\times L^2(\Om_2^*),
\end{equation}
the $H^1$ term being imposed by the Jordan block: the first row of
$-\Ab^{(1)}\partial_xY^{(1)}$ contains $\partial_xY_2$, which must lie in
$L^2$. The traces $\Yo$ in \eqref{eq:plant} are those of regular solutions
($Y_1,Y_3,Y_4\in H^1$ and $Y_2\in H^2$ at each time, as in \Cref{thm:ISS});
in the semigroup formulation of \Cref{ssec:feedback}, this regularity is
part of the domain of the generator.
\Cref{fig:plant} shows the couplings of \eqref{eq:plant}.

\begin{figure}[t]
\centering
\begin{tikzpicture}[x=0.82cm,y=1cm,>=Latex,font=\small,
  wave/.style={->,line width=1.1pt},
  lab/.style={font=\footnotesize,inner sep=1pt},
  box/.style={draw,rounded corners=2pt,align=center,inner sep=4pt,font=\footnotesize}]
  \fill[blue!7]  (0,0) rectangle (5,2.6);
  \fill[gray!12] (5,0) rectangle (12,2.6);
  \draw[gray!60] (0,0) rectangle (12,2.6);
  \draw[line width=2pt,red!55!black] (5,-0.05) -- (5,2.65);
  \node[lab,gray!40!black] at (2.5,0.28) {$\Om_1^*$: weakly hyperbolic};
  \node[lab,gray!40!black] at (8.5,0.28) {$\Om_2^*$: strictly hyperbolic};
  \draw[wave,blue!60!black] (0.35,2.05) -- (4.65,2.05)
    node[lab,above,pos=0.25,blue!60!black] {$Y_1$, speed $\lambda$};
  \draw[wave,blue!60!black] (0.35,1.05) -- (4.65,1.05)
    node[lab,above,pos=0.25,blue!60!black] {$Y_2$, speed $\lambda$};
  \draw[->,line width=1.1pt,orange!80!black] (4.25,1.12) -- (4.25,1.95);
  \node[lab,orange!80!black,anchor=east] at (4.15,1.55) {$\partial_xY_2$};
  \draw[wave,blue!60!black] (5.35,2.05) -- (11.65,2.05)
    node[lab,above,midway,blue!60!black] {$Y_3$, $\lambda_3>0$};
  \draw[wave,orange!80!black] (11.65,1.05) -- (5.35,1.05)
    node[lab,above,midway,orange!80!black] {$Y_4$, $\lambda_4<0$};
  \draw[->,gray!60!black] (0,-0.55) -- (12.6,-0.55) node[right] {$x$};
  \foreach \p/\t in {0/0,5/z_s^*,12/z_b}
    \draw[gray!60!black] (\p,-0.45) -- (\p,-0.65) node[below,lab] {$\t$};
  \node[box,draw=red!55!black,fill=red!4] (eta) at (5,4.4)
    {interface $\eta$\\[1pt] $\dot\eta=\Go\Yo$};
  \node[box,draw=violet!70!black,fill=violet!5] (mf) at (10,4.4)
    {terms of the fixed-domain map\\[1pt]
     $\Cb^{(2)}_\eta\eta+\Cb^{(2)}_{:,2}\Go\Yo$};
  \draw[->,thick,red!55!black] (4.4,2.65) -- ([xshift=4pt]eta.south west);
  \node[lab,red!55!black,align=right,anchor=east] at (4.2,3.35)
    {outgoing\\traces $\Yo$};
  \draw[->,thick,red!55!black] ([xshift=-4pt]eta.south east) -- (5.6,2.65);
  \node[lab,red!55!black,align=left,anchor=west] at (5.8,3.35)
    {$Y_3(z_s^*)=\Kint\Yo$\\by \eqref{eq:Kint}};
  \draw[->,thick,violet!70!black] (eta.east) -- (mf.west);
  \draw[->,thick,violet!70!black] (mf.south) -- (10,2.65);
  \draw[->,thick,blue!60!black] (-1.9,2.05) -- (0.3,2.05)
    node[lab,above,pos=0.4,blue!60!black] {$Y_1(0)=0$};
  \node[box,draw=teal!70!black,fill=teal!6] (bot) at (14.6,1.05)
    {outlet\\[1pt] $Y_3(z_b)$\\ $+\,\Db^{(bot)}u$};
  \draw[->,thick,blue!60!black] (11.65,2.05) -| (bot.north)
    node[lab,above,pos=0.25,blue!60!black] {$Y_3(z_b)$};
  \draw[->,thick,orange!80!black] (bot.west) -- (11.7,1.05)
    node[lab,above,midway,orange!80!black] {$Y_4(z_b)$};
  \node[box,draw=teal!70!black,fill=teal!6] (u) at (6.2,-2.0) {underflow $u$};
  \draw[->,thick,teal!70!black] (u.west) -- (-0.4,-2.0) -- (-0.4,1.05)
    -- (0.3,1.05);
  \node[lab,teal!40!black,anchor=west] at (0.1,0.72) {$Y_2(0)=\epsin u$};
  \draw[->,thick,teal!70!black] (u.east) -| (bot.south);
  \draw[->,thick,dashed,teal!70!black] (2.5,-2.0) -- (2.5,0.05);
  \draw[->,thick,dashed,teal!70!black] (8.5,-2.0) -- (8.5,0.05);
  \node[lab,teal!40!black,fill=white] at (2.5,-1.35) {drag $\Db^{(1)}u$};
  \node[lab,teal!40!black,fill=white] at (8.5,-1.35) {drag $\Db^{(2)}u$};
\end{tikzpicture}
\caption{Structure of the linearized plant \eqref{eq:plant}. The outgoing
traces $\Yo$ drive the interface $\eta$ through $\Go$ and set the incoming wave
$Y_3$ at $x=z_s^*$ through \eqref{eq:Kint}; $\eta$ acts on $\Om_2^*$ through
the fixed-domain map. The
underflow $u$ enters at both ends and through the drag (dashed). The feed
fraction is fixed, so $Y_1$ is driven only by $\partial_xY_2$.}
\label{fig:plant}
\end{figure}

\section{A dissipation inequality for the distributed layer}
\label{sec:iss}
In this section, we  consider the PDE part of \eqref{eq:plant}, with $\eta$
and $u$ seen as inputs, and  build a weighted Lyapunov functional $\Vc$ that
decreases up to terms in $\eta$, $\Go\Yo$, $u$ and $\dot u$
(\Cref{thm:ISS}); \Cref{sec:fwd} uses this inequality.

\subsection{Weighted Lyapunov functional}
\label{ssec:lyap}
\begin{definition}[Lyapunov functional]\label{def:Lyapunov}
Let $p_1,p_2,p_{2x},p_3,p_4>0$ and $\mu_1,\mu_2>0$. We set
\begin{equation}\label{eq:V}
  \Vc(Y)\triangleq\int_0^{z_s^*}\!\!e^{-2\mu_1x}
  \bigl[p_1Y_1^2+p_2Y_2^2+p_{2x}(\partial_xY_2)^2\bigr]\dd x
  +\int_{z_s^*}^{z_b}\!Y^{(2)\top}\Pb(x)Y^{(2)}\dd x ,
\end{equation}
with
\begin{equation}\label{eq:P}
  \Pb(x)\triangleq\mathrm{diag}
  \bigl(p_3e^{-2\mu_2(x-z_s^*)},\,p_4e^{2\mu_2(x-z_b)}\bigr).
\end{equation}
We write $\Wb_1(x)=e^{-2\mu_1x}\mathrm{diag}(p_1,p_2)$ and $\Wb_2=\Pb$ for the
weight matrices acting on $(Y_1,Y_2)$ and $(Y_3,Y_4)$.
\end{definition}

The term in $(\partial_xY_2)^2$ is needed because the Jordan block of
\eqref{eq:zone1} couples $Y_1$ to $\partial_xY_2$. On $\Om_2^*$, each
exponential weight decreases in the direction of its wave ($Y_3$ downwards,
$Y_4$ upwards), as in the strict Lyapunov functionals
of~\cite{amirat2024stability}.

Since the intervals are bounded, the exponentials are bounded away from $0$
and $\infty$, so there are $c_1,c_2>0$ with
\begin{equation}\label{eq:normequiv}
  c_1\|Y\|_\Hs^2\le\Vc(Y)\le c_2\|Y\|_\Hs^2 .
\end{equation}

The weights $p_3,p_4,\mu_2$ of the compression zone must satisfy two
conditions: one on the boundary terms and one on the volume terms of the
derivative of $\Vc$ on $\Om_2^*$ (the computation is in \appiss{}).

\emph{Boundary terms.} The boundary terms of
$\frac{\dd}{\dd t}\int_{\Om_2^*}Y^{(2)\top}\Pb\,Y^{(2)}\dd x$ are
\begin{equation*}
  \Bigl[\,|\lambda_4|\,p_4Y_4^2-\lambda_3\,p_3e^{-2\mu_2(z_b-z_s^*)}Y_3^2
  \Bigr]_{x=z_b}
  +\Bigl[\,\lambda_3\,p_3Y_3^2-|\lambda_4|\,p_4e^{-2\mu_2(z_b-z_s^*)}Y_4^2
  \Bigr]_{x=z_s^*},
\end{equation*}
where $\lambda_3,\lambda_4$ are taken at the same point as the traces. At the
bottom, $Y_4(z_b)=Y_3(z_b)+\Db^{(bot)}u$ by \eqref{eq:bc}, and Young's
inequality with parameter $\nu_b>0$ gives
\begin{equation}\label{eq:youngb}
  Y_4(z_b)^2\le(1+\nu_b)\,Y_3(z_b)^2+(1+\nu_b^{-1})\bigl(\Db^{(bot)}\bigr)^2u^2 .
\end{equation}
At the interface, by \eqref{eq:Kint}, with all traces at $z_s^*$,
\begin{equation}\label{eq:kratio}
  Y_3=K_{int,1}Y_1+K_{int,2}Y_2-k^2Y_4,\qquad
  K_{int,1}=\frac{\lambda^2}{\lambda_3(z_s^*)^2},\quad
  K_{int,2}=\frac{2\lambda}{\lambda_3(z_s^*)^2},
\end{equation}
and Young's inequality with parameter $\nu_i>0$ gives
\begin{equation}\label{eq:youngi}
  Y_3^2\le(1+\nu_i)\,k^4Y_4^2
  +2(1+\nu_i^{-1})\bigl(K_{int,1}^2Y_1^2+K_{int,2}^2Y_2^2\bigr).
\end{equation}
With \eqref{eq:youngb} and \eqref{eq:youngi}, the coefficients of
$Y_3(z_b)^2$ and $Y_4(z_s^*)^2$ in the boundary terms are
\begin{gather*}
  -\lambda_3(z_b)\,p_3e^{-2\mu_2(z_b-z_s^*)}+(1+\nu_b)|\lambda_4(z_b)|\,p_4,\\
  -|\lambda_4(z_s^*)|\,p_4e^{-2\mu_2(z_b-z_s^*)}+(1+\nu_i)k^4\lambda_3(z_s^*)\,p_3 .
\end{gather*}
Since $k^4\lambda_3(z_s^*)/|\lambda_4(z_s^*)|=k^3$, both are negative if
\begin{equation}\label{eq:window}
  (1+\nu_i)\,k^3e^{2\mu_2(z_b-z_s^*)}
  \;<\;\frac{p_4}{p_3}\;<\;
  \frac{e^{-2\mu_2(z_b-z_s^*)}\,\lambda_3(z_b)}{|\lambda_4(z_b)|\,(1+\nu_b)}.
\end{equation}
The left inequality comes from the interface, the right one from the bottom.
Since $k<1$, this interval is non-empty for $\nu_i,\nu_b$ small enough as soon
as $\mu_2<\bar\mu_2(y^*)$, with
\begin{equation}\label{eq:mu2bar}
  \bar\mu_2(y^*)\triangleq\frac{1}{4(z_b-z_s^*)}
  \ln\frac{\lambda_3(z_b)\lambda_3(z_s^*)}{|\lambda_4(z_b)||\lambda_4(z_s^*)|}.
\end{equation}
Since $k^2=(1-\mathrm{Sed})^2/(1+\mathrm{Sed})^2$, with $\mathrm{Sed}$ given
by \eqref{sed_number} at $z_s^{*+}$, and the bottom reflection coefficient is
$1$, the window \eqref{eq:window} is narrow when $\mathrm{Sed}$ is small, and
the interface dissipation $d_4$ of \Cref{ssec:dissip} is then a difference of
close numbers ($k^2=0.9990$ in \Cref{sec:num}).

\emph{Volume terms.} Apart from the terms in $\eta$, $\Go\Yo$ and $u$, the
volume term is
\begin{equation*}
  -\int_{\Om_2^*}Y^{(2)\top}\Qb\,Y^{(2)}\dd x,
\end{equation*}
with
\begin{equation}\label{eq:Q}
  \Qb\triangleq\Pb\,\mathrm{diag}
  \bigl(2\mu_2\lambda_3-\lambda_3',\,2\mu_2|\lambda_4|-\lambda_4'\bigr)
  -\Pb\Bb^{(2)}-\Bb^{(2)\top}\Pb ,
\end{equation}
and it is bounded by $-\bar\alpha\int_{\Om_2^*}Y^{(2)\top}\Pb\,Y^{(2)}\dd x$,
with
\begin{equation}\label{eq:alphabar}
  \bar\alpha(y^*;p_3,p_4,\mu_2)\triangleq
  \inf_{x\in\Om_2^*}\lambda_{\min}
  \bigl(\Pb(x)^{-1/2}\Qb(x)\Pb(x)^{-1/2}\bigr),
\end{equation}
where $\lambda_{\min}$ is the smallest eigenvalue.

\begin{assumption}[Dissipation]\label{ass:dissip}
There are a rate $\mu_2\in\bigl(0,\bar\mu_2(y^*)\bigr)$, weights
$p_3,p_4>0$ and Young parameters $\nu_i,\nu_b>0$ such that \eqref{eq:window}
holds and $\bar\alpha>0$, with $\bar\alpha$ given by \eqref{eq:alphabar}.
\end{assumption}

Condition \eqref{eq:window} restricts $\mu_2$ and $p_4/p_3$;
$\mu_2<\bar\mu_2$ is sufficient for the interval to be non-empty, not
necessary (the exact threshold has the cube of
$\lambda_3(z_s^*)/|\lambda_4(z_s^*)|$ in the logarithm). The main restriction is
$\bar\alpha>0$: the dissipation created by the weights must be larger than
what the coupling $\Bb^{(2)}$ produces.

\subsection{The dissipation inequality}\label{ssec:dissip}

We now define the constants of the estimate. The terms in $Y_1(z_s^*)^2$ and
$Y_2(z_s^*)^2$ of \eqref{eq:youngi} are compensated by the outgoing terms of
$\Om_1^*$ at $x=z_s^*$, $\lambda e^{-2\mu_1z_s^*}(p_1Y_1^2+p_2Y_2^2)$.
Together with the coefficient of $Y_4(z_s^*)^2$ computed in \Cref{ssec:lyap},
this gives the boundary dissipation
\begin{equation}\label{eq:Dpar}
  \Dpar(Y)\triangleq d_1Y_1(z_s^*)^2+d_2Y_2(z_s^*)^2+d_4Y_4(z_s^*)^2 ,
\end{equation}
\begin{equation}\label{eq:d14}
\begin{aligned}
  d_1&=p_1\lambda e^{-2\mu_1z_s^*}
      -2(1+\nu_i^{-1})p_3\lambda_3(z_s^*)\,K_{int,1}^2,\\
  d_2&=p_2\lambda e^{-2\mu_1z_s^*}
      -2(1+\nu_i^{-1})p_3\lambda_3(z_s^*)\,K_{int,2}^2,\\
  d_4&=p_4e^{-2\mu_2(z_b-z_s^*)}|\lambda_4(z_s^*)|
      -p_3\lambda_3(z_s^*)(1+\nu_i)\,k^4 .
\end{aligned}
\end{equation}
The coefficients of $u^2$ and $\dot u^2$ are
\begin{equation}\label{eq:sigu}
\begin{gathered}
  \sigma_u=\nu_2^{-1}\frac{1-e^{-2\mu_1z_s^*}}{2\mu_1}p_2\gamma^2\epsin^2
  +\lambda p_2\epsin^2
  +(1+\nu_b^{-1})\,p_4|\lambda_4(z_b)|\bigl(\Db^{(bot)}\bigr)^2,\\
  \sigma_{{\rm vol},u}=\nu_2^{-1}\int_{\Om_2^*}\Db^{(2)\top}\Pb\,\Db^{(2)}\dd x,
  \qquad
  \sigma_{\dot u}=\frac{p_{2x}\epsin^2}{\lambda}.
\end{gathered}
\end{equation}
The three terms of $\sigma_u$ come from the source $\gamma\epsin u$ of
\eqref{eq:zone1}, from the top condition $Y_2(0)=\epsin u$ and from the bottom
condition of \eqref{eq:bc}; $\sigma_{{\rm vol},u}$ comes from
the term $\Db^{(2)}u$ of \eqref{eq:zone2}. The coefficients of $\eta^2$ and
$(\Go\Yo)^2$ are $\sigma_\eta=I_\eta/\nu_2$ and $\sigma_o=I_o/\nu_2$, with
\begin{equation}\label{eq:Ieo}
  I_\eta\triangleq\int_{\Om_2^*}\Cb^{(2)\top}_\eta\Pb\,\Cb^{(2)}_\eta\dd x,
  \qquad
  I_o\triangleq\int_{\Om_2^*}\Cb^{(2)\top}_{:,2}\Pb\,\Cb^{(2)}_{:,2}\dd x .
\end{equation}
Finally, when $d_1,d_2,d_4>0$, the Cauchy--Schwarz inequality gives
\begin{equation}\label{eq:Sigmab}
  (\Go\Yo)^2\le\Sigma_b\,\Dpar(Y),\qquad
  \Sigma_b\triangleq\frac{G_{o,1}^2}{d_1}+\frac{G_{o,2}^2}{d_2}
  +\frac{G_{o,4}^2}{d_4} .
\end{equation}

\begin{theorem}[Dissipation inequality for the distributed layer]\label{thm:ISS}
Let \Cref{ass:dissip} hold with the parameters $(p_3,p_4,\mu_2,\nu_i,\nu_b)$,
and let $\nu_1>0$ (the Young parameter of the Jordan coupling term
$-2p_1Y_1\partial_xY_2$, see \appiss{}). There are weights $p_1,p_2,p_{2x}>0$ and a rate
$\mu_1>0$ such that $d_1>0$ and $d_2>0$ ($d_4>0$ follows from the left
inequality of \eqref{eq:window}) and such that, for every
$\nu_2\in(0,\bar\alpha/3)$, every $u\in C^1$ and every
classical solution of the PDE part of \eqref{eq:plant} with $Y_1,Y_3,Y_4\in H^1$ and
$Y_2\in H^2$ at each time,
\begin{equation}\label{eq:ISS}
  \dot\Vc(Y)\le-\bigl(\bar\alpha-3\nu_2\bigr)\Vc(Y)-\Dpar(Y)
  +\sigma_\eta\,\eta^2+\sigma_o\,(\Go\Yo)^2
  +(\sigma_u+\sigma_{{\rm vol},u})u^2+\sigma_{\dot u}\dot u^2 .
\end{equation}
\end{theorem}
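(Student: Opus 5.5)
We differentiate $\Vc$ along classical solutions and integrate by parts zone by zone, treating the Jordan zone and the compression zone separately. The extra term $p_{2x}(\partial_xY_2)^2$ is handled by differentiating the $Y_2$ equation in $x$. This is legitimate because $Y_2\in H^2$.

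\emph{Step 1: the free-settling zone.} Since $\Bb^{(1)}$ is constant and $\Cb^{(1)}=0$, we have $\partial_tY_2+\lambda\partial_xY_2=-\gamma Y_2+\gamma\epsin u$. Hence $q=\partial_xY_2$ obeys $\partial_tq+\lambda\partial_xq=-\gamma q$. Its trace at $x=0$ comes from the equation evaluated at the inlet:
\[
\lambda q(t,0)=-\epsin\dot u-\gamma Y_2(t,0)+\gamma\epsin u=-\epsin\dot u .
\]
Integrating by parts with the weight $e^{-2\mu_1x}$, each of $Y_1,Y_2,q$ produces four contributions:
\begin{itemize}
\item the volume term $-2\mu_1\lambda\int e^{-2\mu_1x}(\cdot)^2$;
\item an outgoing term $-\lambda e^{-2\mu_1z_s^*}(\cdot)^2$ at $z_s^*$;
\item an incoming term $+\lambda(\cdot)^2$ at $x=0$;
\item the damping $-2\gamma p_2Y_2^2$ and $-2\gamma p_{2x}q^2$.
\end{itemize}
The incoming terms are explicit. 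At $x=0$, $Y_1=0$, $Y_2=\epsin u$ gives $\lambda p_2\epsin^2u^2$, and $q$ gives $p_{2x}\epsin^2\dot u^2/\lambda$, which is $\sigma_{\dot u}\dot u^2$. The $q$ outgoing term at $z_s^*$ is simply dropped, being nonpositive.

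The source $2p_2\gamma\epsin\int e^{-2\mu_1x}Y_2u$ is split by Young's inequality with $\nu_2$. This yields $\nu_2\int e^{-2\mu_1x}p_2Y_2^2$ plus the first term of $\sigma_u$, using $\int_0^{z_s^*}e^{-2\mu_1x}\dd x=(1-e^{-2\mu_1z_s^*})/(2\mu_1)$. The Jordan cross term $-2p_1\int e^{-2\mu_1x}Y_1q$ is split with $\nu_1$ into $p_1\nu_1Y_1^2+p_1\nu_1^{-1}q^2$.

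Choosing $\mu_1$ large relative to $\nu_1$ (so that $2\mu_1\lambda-\nu_1$ dominates) and $p_{2x}$ large relative to $p_1/\nu_1$ (so that $2\gamma p_{2x}+2\mu_1\lambda p_{2x}$ absorbs $p_1\nu_1^{-1}$) gives a volume rate on $\Om_1^*$ at least $\bar\alpha$. This is possible because $\mu_1$ is free.

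\emph{Step 2: the compression zone.} Here we apply the computation already displayed in \Cref{ssec:lyap}. Integrating by parts in $\frac{\dd}{\dd t}\int Y^{(2)\top}\Pb Y^{(2)}$ gives the displayed boundary terms plus the volume term $-\int Y^{(2)\top}\Qb Y^{(2)}$, and the latter is at most $-\bar\alpha\int Y^{(2)\top}\Pb Y^{(2)}$ by \eqref{eq:alphabar}. The three cross terms with $\Cb^{(2)}_\eta\eta$, $\Cb^{(2)}_{:,2}\Go\Yo$ and $\Db^{(2)}u$ are each bounded by Young's inequality with $\nu_2$ in the $\Pb$-weighted norm. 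Each costs $\nu_2\int Y^{(2)\top}\Pb Y^{(2)}$, and together they produce $I_\eta/\nu_2$, $I_o/\nu_2$ and $\sigma_{{\rm vol},u}$; this is where the factor $3\nu_2$ comes from. On $\Om_1^*$ only one $\nu_2$ is spent, so $3\nu_2$ bounds it as well.

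For the boundary terms, we substitute \eqref{eq:youngb} at $z_b$, which gives a nonpositive $Y_3(z_b)^2$ coefficient by the right inequality of \eqref{eq:window} plus the last term of $\sigma_u$. We substitute \eqref{eq:youngi} at $z_s^*$, which gives the coefficient $-d_4$ on $Y_4^2$, negative by the left inequality, plus the $Y_1^2,Y_2^2$ terms.

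\emph{Step 3: assembly and main obstacle.} We combine the $z_s^*$ traces from both zones, which produces exactly $-\Dpar(Y)$ with $d_1,d_2$ as in \eqref{eq:d14}. Then we choose $p_1,p_2$ large enough that $d_1,d_2>0$. The delicate point is that enlarging $p_1$ increases the Jordan penalty $p_1/\nu_1$. The weights therefore have to be fixed in this order:
\begin{enumerate}
\item $p_1,p_2$ from $d_1,d_2>0$;
\item then $p_{2x}\ge p_1/(\nu_1\cdot2\gamma)$;
\item then $\mu_1$ so that the $\Om_1^*$ rate is at least $\bar\alpha$.
\end{enumerate}
This choice does not involve $\nu_2$. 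Increasing $\mu_1$ shrinks $e^{-2\mu_1z_s^*}$ in $d_1,d_2$, so the step to watch is whether $\mu_1$ can be chosen without destroying $d_1,d_2>0$. I would first avoid this by making the $Y_1,Y_2$ rate come from $2\mu_1\lambda$ with $\mu_1$ fixed at a moderate value, for example $\mu_1=\bar\alpha/\lambda$, and only afterwards scaling $p_1,p_2$ up. Both the $d_i$ and the volume margins are homogeneous in $(p_1,p_2)$ once $p_{2x}$ is scaled with $p_1$, except for the $p_3$ terms in $d_1,d_2$, which become negligible. Summing the two zones then gives \eqref{eq:ISS}.
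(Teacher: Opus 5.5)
Your proposal is correct and follows the paper's own proof essentially step for step. It uses the same zone-by-zone integration by parts, the same Young splittings ($\nu_1$ on the Jordan term, $\nu_2$ on the source and on the three $\Om_2^*$ cross terms) and the same top, bottom and interface bookkeeping, and your corrected ordering ($\mu_1$ first, then $p_1,p_2$ from $d_1,d_2>0$, then $p_{2x}$) is exactly the paper's. The one slip is the sample value $\mu_1=\bar\alpha/\lambda$: it gives $2\mu_1\lambda-\nu_1\ge\bar\alpha$ only when $\nu_1\le\bar\alpha$, so for arbitrary $\nu_1>0$ you should take $\mu_1\ge(\bar\alpha+\nu_1)/(2\lambda)$, as the paper does.
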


The proof is in \appiss{}. We keep \eqref{eq:ISS} in this form because
\Cref{sec:fwd} uses the terms in $\Vc$ and in $\Dpar$ separately. The term in
$\dot u$ appears because $Y_2(t,0)=\epsin u(t)$ and \eqref{eq:zone1} give
$\partial_xY_2(t,0)=-\epsin\dot u(t)/\lambda$, so the input must be $C^1$.
Increasing $\mu_2$ increases the diagonal part of $\Qb$ but decreases $d_4$, so the weights
must be chosen together (\Cref{cond:sg}).
\section{Forwarding design}
\label{sec:fwd}

With $u\equiv0$, \eqref{eq:plant} couples the layer of \Cref{sec:iss} with
the integrator $\dot\eta=\Go\Yo$, which has no damping, so
$\Vc+\rho\eta^2$ is not a Lyapunov functional. The position $\eta$ also acts on
the layer through $\Cb^{(2)}_\eta$, which is non-zero when
$\partial_x\epss^*\not\equiv0$. Following \cite{terrand2019adding}, we use a forwarding transformation to construct the control law.

\subsection{The forwarding coordinate and the Sylvester kernel}
\label{ssec:kernel}

Let $\Lk_i\in W^{1,\infty}(\Om_i^*;\mathbb{R}^{1\times2})$, $i=1,2$, to be
chosen, and
\begin{equation}\label{eq:transform}
  \Lk[Y]\triangleq\sum_{i=1,2}\int_{\Om_i^*}\Lk_i(x)Y^{(i)}(x)\dd x,\qquad
  w(t)\triangleq\eta(t)-\Lk[Y(t)].
\end{equation}
Let $(Y,\eta)$ be a solution of \eqref{eq:plant} of class $C^1$ in $t$ and
$x$. Differentiating $w$, using \eqref{eq:plant} and $\dot\eta=\Go\Yo$, and
integrating the term $\Lk_i\Ab^{(i)}\partial_xY^{(i)}$ by parts, we get
\begin{equation}\label{eq:dotw}
  \dot w=\gamma_v\,\Go\Yo+\mathcal{F}\,\eta
  +\sum_{i=1,2}\Bigl(\bigl[\Nb_iY^{(i)}\bigr]_{\partial\Om_i^*}
  -\int_{\Om_i^*}\bigl(\Nb_i'+\Lk_i\Bb^{(i)}\bigr)Y^{(i)}\dd x
  -\int_{\Om_i^*}\Lk_i\Db^{(i)}\dd x\,u\Bigr),
\end{equation}
where $\Nb_i\triangleq\Lk_i\Ab^{(i)}$, $\Nb_i'\triangleq\partial_x\Nb_i$, the
boundary terms are
\begin{equation}\label{eq:bracket}
  \bigl[\Nb_1Y^{(1)}\bigr]_{\partial\Om_1^*}\triangleq
  \Nb_1Y^{(1)}\big|_{x=0}^{x=z_s^{*-}},\qquad
  \bigl[\Nb_2Y^{(2)}\bigr]_{\partial\Om_2^*}\triangleq
  \Nb_2Y^{(2)}\big|_{x=z_s^{*+}}^{x=z_b},
\end{equation}
and
\begin{equation}\label{eq:Fgam}
  \mathcal{F}\triangleq-\int_{\Om_2^*}\Lk_2\,\Cb^{(2)}_\eta\dd x,
  \qquad
  \gamma_v\triangleq1-\int_{\Om_2^*}\Lk_2\,\Cb^{(2)}_{:,2}\dd x .
\end{equation}
We choose $\Lk$ so that $\dot w=c^*w+Gu$ for some scalars $c^*$ and $G$, in
three steps: for a scalar $c$, Step~1 replaces the integrals of $Y$ in
\eqref{eq:dotw} by $-c\,\Lk[Y]$, Step~2 cancels the terms in $Y$ of the
boundary terms \eqref{eq:bracket}, and Step~3 gives the equation that fixes
$c=c^*$. After Steps~1 and~2, $\Lk$ depends only on $c$, and we write
$\mathcal{F}(c)$ and $\gamma_v(c)$.

\paragraph*{Step 1: the integrals of $Y$.} For a scalar $c$, we ask that the
integrals of $Y$ in \eqref{eq:dotw} be equal to $-c\,\Lk[Y]$, that is,
$\Nb_i'+\Lk_i\Bb^{(i)}=c\,\Lk_i$ on $\Om_i^*$. Since $\Ab^{(i)}$ is invertible
($\det\Ab^{(1)}=\lambda^2$, $\det\Ab^{(2)}=\lambda_3\lambda_4\neq0$), this is the
linear ODE
\begin{equation}\label{eq:sylvester}
  \Nb_i'=\Nb_i\,\Xib_i(\cdot\,;c),\qquad
  \Xib_i\triangleq(\Ab^{(i)})^{-1}\bigl(c\,\mathbf{I}-\Bb^{(i)}\bigr),
\end{equation}
whose solutions are $\Nb_i(x)=\Nb_i(z_s^*)\Pib_i(x,z_s^*;c)$, with
$\partial_x\Pib_i=\Pib_i\Xib_i$ and $\Pib_i(z_s^*,z_s^*;c)=\mathbf{I}$. The
kernel is then $\Lk_i=\Nb_i(\Ab^{(i)})^{-1}$. On $\Om_1^*$ the coefficients are
constant and
\begin{equation}\label{eq:Pi1}
  \Pib_1(x,z_s^*;c)=
  \begin{bmatrix}
   e^{\frac{c}{\lambda}(x-z_s^*)} &
   \frac{c+\gamma}{\gamma\lambda}\bigl(e^{\frac{c}{\lambda}(x-z_s^*)}
   -e^{\frac{c+\gamma}{\lambda}(x-z_s^*)}\bigr)\\[4pt]
   0 & e^{\frac{c+\gamma}{\lambda}(x-z_s^*)}
  \end{bmatrix};
\end{equation}
on $\Om_2^*$, $\Pib_2$ has no closed form, and we compute it numerically for
the illustrative example of \Cref{sec:num}.

\paragraph*{Step 2: the boundary terms.} Let $N_{i,j}$ be the entries of
$\Nb_i$, $N_{i,j}^{\pm}\triangleq N_{i,j}(z_s^{*\pm})$ and
$K_{int,4}\triangleq-k^2$. By \eqref{eq:bracket}, \eqref{eq:bc} and
\eqref{eq:kratio}, with $Y_1,Y_2$ taken at $z_s^{*-}$ and $Y_4$ at
$z_s^{*+}$,
\begin{multline}\label{eq:bdry}
  \gamma_v\Go\Yo+\sum_{i=1,2}\bigl[\Nb_iY^{(i)}\bigr]_{\partial\Om_i^*}
  =\bigl(N_{2,1}+N_{2,2}\bigr)(z_b)\,Y_3(z_b)
  +\bigl(\gamma_vG_{o,1}+N_{1,1}^{-}-N_{2,1}^{+}K_{int,1}\bigr)Y_1\\
  +\bigl(\gamma_vG_{o,2}+N_{1,2}^{-}-N_{2,1}^{+}K_{int,2}\bigr)Y_2
  +\bigl(\gamma_vG_{o,4}-N_{2,1}^{+}K_{int,4}-N_{2,2}^{+}\bigr)Y_4\\
  +\bigl(-N_{1,2}(0)\,\epsin+N_{2,2}(z_b)\,\Db^{(bot)}\bigr)u .
\end{multline}
The coefficients of $Y_3(z_b)$, $Y_4$, $Y_1$ and $Y_2$ in \eqref{eq:bdry}
vanish if
\begin{equation}\label{eq:matching}
\begin{aligned}
  N_{2,1}(z_b)+N_{2,2}(z_b)&=0,\\
  \gamma_vG_{o,4}-N_{2,1}^{+}K_{int,4}-N_{2,2}^{+}&=0,\\
  \gamma_vG_{o,1}+N_{1,1}^{-}-N_{2,1}^{+}K_{int,1}&=0,\\
  \gamma_vG_{o,2}+N_{1,2}^{-}-N_{2,1}^{+}K_{int,2}&=0.
\end{aligned}
\end{equation}
Since $\Nb_2(z_b)=\Nb_2(z_s^*)\Pib_2(z_b,z_s^*;c)$, the first equation reads
$N_{2,1}^{+}v_1+N_{2,2}^{+}v_2=0$, with
\begin{equation}\label{eq:vvec}
  (v_1,v_2)^\top\triangleq\Pib_2(z_b,z_s^*;c)\,(1,1)^\top,
\end{equation}
and with the second equation it gives
\begin{equation}\label{eq:Delta}
  N_{2,1}^{+}\,\Delta_z(c)=-\gamma_vG_{o,4}v_2,\qquad
  \Delta_z(c)\triangleq v_1(c)-K_{int,4}\,v_2(c).
\end{equation}
If $\Delta_z(c)\neq0$ (non-resonance condition), \eqref{eq:Delta} and
\eqref{eq:matching} give $N_{2,1}^{+}$, $N_{2,2}^{+}$, $N_{1,1}^{-}$ and
$N_{1,2}^{-}$, all proportional to $\gamma_v$. Hence $\Lk_i=\gamma_v\Lk_i^0$,
where $\Lk_i^0$ is the kernel obtained with $\gamma_v=1$, and the definition of
$\gamma_v$ in \eqref{eq:Fgam} gives
\begin{equation}\label{eq:gammav}
  \gamma_v(c)=\frac{1}{1+m_2(c)},\qquad
  m_2(c)\triangleq\int_{\Om_2^*}\Lk_2^0\,\Cb^{(2)}_{:,2}\dd x,
\end{equation}
provided $1+m_2(c)\neq0$. Since
$\Delta_z(c)=(1,k^2)\,\Pib_2(z_b,z_s^*;c)\,(1,1)^\top$, non-resonance means
that $c$ is not an eigenvalue of the compression layer closed by its two
reflections ($Y_3=-k^2Y_4$ at $z_s^*$, $Y_4=Y_3$ at $z_b$). In
finite-dimensional forwarding, this corresponds to the condition that $c$ is not an eigenvalue
of the drift matrix of the stable subsystem.

\paragraph*{Step 3: the equation for $c$.} After Steps~1 and~2,
\eqref{eq:dotw} reduces to
\begin{equation}\label{eq:F}
\begin{gathered}
  \dot w=\mathcal{F}(c)\,\eta-c\,\Lk[Y]+G(c)\,u,\\
  G(c)\triangleq-N_{1,2}(0;c)\,\epsin+N_{2,2}(z_b;c)\,\Db^{(bot)}
  -\sum_{i=1,2}\int_{\Om_i^*}\Lk_i(\cdot\,;c)\,\Db^{(i)}\dd x .
\end{gathered}
\end{equation}
With $\eta=w+\Lk[Y]$, this is
$\dot w=\mathcal{F}(c)\,w+\bigl(\mathcal{F}(c)-c\bigr)\Lk[Y]+G(c)\,u$, so the
integral of $Y$ disappears if and only if
\begin{equation}\label{eq:cstar}
  c^*=\mathcal{F}(c^*).
\end{equation}
By Step~2, the kernel, hence $\mathcal{F}(c)$, is defined for $c$ in the
open set
\begin{equation*}
  \mathcal{C}\triangleq\bigl\{c\in\mathbb{R}:\ \Delta_z(c)\neq0,\
  1+m_2(c)\neq0\bigr\},
\end{equation*}
and $\mathcal{F}$ is continuous on $\mathcal{C}$, since
$\Pib_i(\cdot,z_s^*;c)$ depends continuously on $c$. By the intermediate value
theorem, if $[c_-,c_+]\subset\mathcal{C}$ and
$\bigl(\mathcal{F}(c_-)-c_-\bigr)\bigl(\mathcal{F}(c_+)-c_+\bigr)<0$, then
\eqref{eq:cstar} has a root $c^*\in(c_-,c_+)$. 

\begin{lemma}[Kernel construction]\label{lem:kernel}
Assume the equilibrium is strictly subcritical on $\Om_2^*$
($\lambda_4<0$ on $\overline{\Om_2^*}$), and let $c^*$ satisfy
\eqref{eq:cstar} with $\Delta_z(c^*)\neq0$ and $1+m_2(c^*)\neq0$. Then the
kernels $\Lk_i$ of Steps~1 and~2 with $c=c^*$ are in
$W^{1,\infty}(\Om_i^*;\mathbb{R}^{1\times2})$ and, along every solution of
\eqref{eq:plant} of class $C^1$,
\begin{equation}\label{eq:dotw_closed}
  \dot w=c^*w+Gu,\qquad G\triangleq G(c^*).
\end{equation}
\end{lemma}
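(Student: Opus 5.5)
The proof has two parts: first the regularity of the kernels, then the identity \eqref{eq:dotw_closed}, obtained by assembling Steps~1--3.

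\emph{Regularity.} On $\Om_1^*$ the transition matrix $\Pib_1(\cdot,z_s^*;c^*)$ is given in closed form by \eqref{eq:Pi1}. It is smooth on $\overline{\Om_1^*}$, and $\Lk_1=\Nb_1(\Ab^{(1)})^{-1}$ is smooth as well, since $\det\Ab^{(1)}=\lambda^2>0$.

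On $\Om_2^*$ the argument goes through the coefficients of \eqref{eq:sylvester}.
\begin{itemize}[leftmargin=1.5em,itemsep=1pt]
\item The profile $\epss^*$ is $C^1$ on $\overline{\Om_2^*}$, as the solution of \eqref{eq:steadyODE} with $\sigma_e\in C^2$. So $\lambda_3,\lambda_4$ are $C^0$ and have bounded derivatives, and $\Bb^{(2)}$ in \eqref{eq:B2} is bounded. This uses $\sigma_e''$, $\mathcal{R}'$ and $\partial_x\epss^*$, which are bounded on the compact set $[z_s^*,z_b]$ because $\eps_c<\epss^*<1$ there.
\item Strict subcriticality gives $\lambda_4<0<\lambda_3$ on the compact interval. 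Hence $|\lambda_3\lambda_4|$ is bounded away from zero, and $(\Ab^{(2)})^{-1}$ is continuous.
\item Therefore $\Xib_2(\cdot;c^*)\in L^\infty$, and the linear ODE $\partial_x\Pib_2=\Pib_2\Xib_2$ has a unique absolutely continuous solution with $\partial_x\Pib_2\in L^\infty$. This is Carathéodory theory; continuity of $\Xib_2$ gives it even in $C^1$.
\item Then $\Nb_2\in W^{1,\infty}$. The derivative of $(\Ab^{(2)})^{-1}$ is bounded, because $\lambda_j'$ is bounded and $\lambda_j$ is bounded away from zero, so $\Lk_2=\Nb_2(\Ab^{(2)})^{-1}\in W^{1,\infty}$.
\end{itemize}

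\emph{Well-definedness of the matching values.} The non-resonance condition $\Delta_z(c^*)\ne0$ makes \eqref{eq:Delta} uniquely solvable for $N_{2,1}^+$ in terms of $\gamma_v$. The first two lines of \eqref{eq:matching} then give $N_{2,2}^+$, and the last two give $N_{1,1}^-,N_{1,2}^-$, all linear in $\gamma_v$. The condition $1+m_2(c^*)\ne0$ makes \eqref{eq:gammav} consistent with the definition \eqref{eq:Fgam}: with $\Lk_2=\gamma_v\Lk_2^0$, the relation $\gamma_v=1-\gamma_v m_2$ is solved uniquely. So the kernels are well defined and self-consistent.

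\emph{Derivation of the ODE.} Take a $C^1$ solution of \eqref{eq:plant}. Differentiate $w=\eta-\Lk[Y]$ under the integral, substitute $\partial_tY^{(i)}$, and integrate $\Lk_i\Ab^{(i)}\partial_xY^{(i)}$ by parts. This is legitimate because $\Lk_i\in W^{1,\infty}$, $Y\in C^1$, and the product rule holds for absolutely continuous functions. The result is \eqref{eq:dotw}. On the compression zone $\Ab^{(2)}$ depends on $x$, but it is absorbed into $\Nb_2=\Lk_2\Ab^{(2)}$, so the integration by parts produces $\Nb_2'$ exactly.

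Next, insert the boundary conditions \eqref{eq:bc} and the interface relation \eqref{eq:kratio} to rewrite the trace terms as \eqref{eq:bdry}. By \eqref{eq:matching}, the coefficients of $Y_3(z_b)$, $Y_1$, $Y_2$ and $Y_4$ vanish. By \eqref{eq:sylvester}, the volume integrals equal $-c^*\Lk[Y]$. What remains is \eqref{eq:F}. Writing $\eta=w+\Lk[Y]$ and using $\mathcal{F}(c^*)=c^*$ cancels the $\Lk[Y]$ term and leaves $\dot w=c^*w+G(c^*)u$.

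\emph{Main obstacle.} The delicate point is the bookkeeping in \eqref{eq:bdry}: one must check that the traces at $z_s^{*\pm}$ are assigned to the correct side, including $Y_3(z_s^*)=\Kint\Yo$ entering through $-N_{2,1}^+$. One must also check that the term $\Cb^{(2)}_{:,2}\Go\Yo$ is collected into the factor $\gamma_v$ rather than treated as a volume term in $Y$. I would verify this coefficient by coefficient against \eqref{eq:closedcoupling} and the sign convention \eqref{eq:bracket}.
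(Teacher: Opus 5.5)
Your proposal is correct and follows the paper's own route. Bounded $\Xib_i$, which comes from $\lambda,\lambda_3>0>\lambda_4$ on the closed intervals, gives $\Pib_i$ and hence $\Nb_i$ and $\Lk_i=\Nb_i(\Ab^{(i)})^{-1}$ in $W^{1,\infty}$; the identity \eqref{eq:dotw_closed} then follows from Steps~1--3 at $c=c^*$. The paper obtains the $W^{1,\infty}$ bound by Gr\"onwall, whereas you use the closed form \eqref{eq:Pi1} and linear ODE theory; your explicit check that $\partial_x(\Ab^{(2)})^{-1}$ is bounded, which the paper's proof leaves implicit, is a welcome addition.
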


\begin{proof}
Since $\lambda>0$, $\lambda_3>0$ and $\lambda_4<0$ on the closed intervals,
$(\Ab^{(i)})^{-1}$ is bounded, and
\begin{equation*}
  \|\Xib_i\|_{L^\infty(\Om_i^*)}\le X_i\triangleq
  \bigl\|(\Ab^{(i)})^{-1}\bigr\|_{L^\infty}\bigl(|c^*|
  +\|\Bb^{(i)}\|_{L^\infty}\bigr).
\end{equation*}
Gr\"onwall's lemma, applied to $\partial_x\Pib_i=\Pib_i\Xib_i$ and to
$\partial_x\Pib_i^{-1}=-\Xib_i\Pib_i^{-1}$, gives
\begin{equation*}
  \max\bigl\{\|\Pib_i(x,z_s^*)\|,\|\Pib_i(x,z_s^*)^{-1}\|\bigr\}
  \le e^{X_i|\Om_i^*|},\qquad x\in\overline{\Om_i^*}.
\end{equation*}
Hence $\Nb_i=\Nb_i(z_s^*)\Pib_i\in W^{1,\infty}$ and
$\Lk_i=\Nb_i(\Ab^{(i)})^{-1}\in W^{1,\infty}$. The traces $\Nb_i(z_s^{*\pm})$
and $\gamma_v$ are given by Step~2, and \eqref{eq:dotw_closed} follows from
Steps~1 to~3 with $c=c^*$.
\end{proof}

\subsection{ISS in the variables \texorpdfstring{$(Y,w,u,\dot{u})$}{(Y,w,u,du/dt)}}
\label{ssec:issw}

By \Cref{lem:kernel}, $\dot w=c^*w+Gu$, and $Y$ does not appear in this
equation. We now write the inequality of \Cref{thm:ISS} with $w$ instead of
$\eta$, using $\eta=w+\Lk[Y]$. Let
\begin{equation}\label{eq:constants}
  J_\Lk\triangleq\sum_{i=1,2}\int_{\Om_i^*}\Lk_i\Wb_i^{-1}\Lk_i^\top\dd x,
  \qquad
  a_\eta\triangleq-c^*+\sqrt{J_\Lk I_\eta},
  \qquad a_o\triangleq\Sigma_bI_o .
\end{equation}
By the Cauchy--Schwarz inequality, and by \eqref{eq:Fgam} and
\eqref{eq:cstar},
\begin{equation*}
  \Lk[Y]^2\le J_\Lk\sum_{i=1,2}\int_{\Om_i^*}Y^{(i)\top}\Wb_iY^{(i)}\dd x
  \le J_\Lk\Vc(Y),\qquad
  \int_{\Om_2^*}\Lk_2\Cb^{(2)}_\eta\dd x=-\mathcal{F}(c^*)=-c^* ,
\end{equation*}
where the second inequality holds because $\Vc$ contains, in addition, the
nonnegative term in $(\partial_xY_2)^2$.

\begin{proposition}[ISS in the variables $(Y,w,u,\dot{u})$]
\label{prop:ISSw}
Let \Cref{ass:dissip} hold, let $c^*$ be a root of \eqref{eq:cstar} with
$\Delta_z(c^*)\neq0$ and $1+m_2(c^*)\neq0$, let $\Lk$ be the kernel of
\Cref{lem:kernel}, and let $\nu_2\in(0,\bar\alpha/3)$, $\vartheta>0$ and $\delta\in(0,1)$. With the
weights of \Cref{thm:ISS}, along every solution of \eqref{eq:plant} with the
regularity of \Cref{thm:ISS},
\begin{equation}\label{eq:ISSw}
  \dot\Vc(Y)\le-\alpha_w\Vc(Y)-\delta\,\Dpar(Y)
  +\sigma_w\,w^2+(\sigma_u+\sigma_{{\rm vol},u})u^2+\sigma_{\dot u}\dot u^2,
\end{equation}
where
\begin{equation}\label{eq:alphaw}
  \alpha_w\triangleq\bar\alpha-\nu_2-a_\eta-\frac{a_o}{1-\delta}-\vartheta,
  \qquad
  \sigma_w\triangleq\frac{I_\eta}{\vartheta}.
\end{equation}
\end{proposition}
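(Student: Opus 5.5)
The plan is not to apply \eqref{eq:ISS} as a black box. Instead, I would return to the computation of $\dot\Vc$ in \appiss{} at the stage just before the three volume cross terms were absorbed by Young's inequality with parameter $\nu_2$. In that computation the exogenous terms enter $\dot\Vc$ through $2\int_{\Om_2^*}Y^{(2)\top}\Pb\,\Cb^{(2)}_\eta\dd x\,\eta$, through $2\int_{\Om_2^*}Y^{(2)\top}\Pb\,\Cb^{(2)}_{:,2}\dd x\,\Go\Yo$, and through the volume input terms in $u$. Each was bounded by $\nu_2\Vc+\nu_2^{-1}(\cdot)^2$, which produced the loss $3\nu_2$ and the coefficients $\sigma_\eta,\sigma_o,\sigma_{{\rm vol},u}$. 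I would keep the treatment of the $u$ term unchanged, at a cost of one $\nu_2$ and giving $\sigma_{{\rm vol},u}u^2$. The boundary terms, which give $-\Dpar$, $\sigma_uu^2$ and $\sigma_{\dot u}\dot u^2$, also stay unchanged. This leaves $-(\bar\alpha-\nu_2)\Vc-\Dpar$ plus the two remaining cross terms, which I would re-estimate.

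\emph{The $\Go\Yo$ term.} By Cauchy--Schwarz in the weighted $L^2$ space, $\bigl|\int Y^{(2)\top}\Pb\,\Cb^{(2)}_{:,2}\bigr|\le\sqrt{I_o\Vc}$. I would then apply Young's inequality with the weight chosen so that the trace part is absorbed by a fraction of the boundary dissipation: $2ab\le\frac{1-\delta}{\Sigma_b}a^2+\frac{\Sigma_b}{1-\delta}b^2$, with $a=\Go\Yo$ and $b^2\le I_o\Vc$. By \eqref{eq:Sigmab}, $a^2\le\Sigma_b\Dpar$, so this term costs $(1-\delta)\Dpar+\frac{a_o}{1-\delta}\Vc$. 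The remaining boundary dissipation is then exactly $-\delta\Dpar$.

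\emph{The $\eta$ term.} Substitute $\eta=w+\Lk[Y]$ from \eqref{eq:transform}. For the $w$ part, Cauchy--Schwarz followed by Young's inequality with parameter $\vartheta$ gives $\vartheta\Vc+\frac{I_\eta}{\vartheta}w^2$, which is the source of $\sigma_w$. For the $\Lk[Y]$ part, I would use $\Lk[Y]^2\le J_\Lk\Vc$ from \Cref{ssec:issw} together with $\bigl|\int Y^{(2)\top}\Pb\,\Cb^{(2)}_\eta\bigr|\le\sqrt{I_\eta\Vc}$. This gives a quadratic bound of size $\sqrt{J_\Lk I_\eta}\,\Vc$, up to the factor produced by the normalization in \appiss{}.

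The step I expect to be the main obstacle is matching $a_\eta=-c^*+\sqrt{J_\Lk I_\eta}$ exactly, in particular the term $-c^*$. My first attempt would use the identity $\int\Lk_2\Cb^{(2)}_\eta=-\mathcal F(c^*)=-c^*$ to isolate, inside the product $\Lk[Y]\int Y^{(2)\top}\Pb\Cb^{(2)}_\eta$, the component of $Y$ along $\Wb^{-1}\Lk^\top$. If that decomposition does not close, I would fall back on observing that in the regime of interest $c^*<0$, so that $-c^*\ge0$ only enlarges $a_\eta$. In that case the bound with $a_\eta$ follows a fortiori from the $\sqrt{J_\Lk I_\eta}$ estimate, once the constant from the appendix normalization is checked to be absorbed. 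Summing all contributions then gives \eqref{eq:ISSw} with $\alpha_w$ as in \eqref{eq:alphaw}.
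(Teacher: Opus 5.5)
Your treatment of the terms in $u$, in $\Go\Yo$ (Young with weight $\Sigma_b/(1-\delta)$, which leaves $-\delta\Dpar$) and in $w$ (Young with $\vartheta$) is exactly the paper's. The gap is the one step that produces $a_\eta$: the bound on $2\Lk[Y]\Phi(Y)$, with $\Phi(Y)\triangleq\int_{\Om_2^*}Y^{(2)\top}\Pb\,\Cb^{(2)}_\eta\dd x$. Your fallback for it does not work.

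Cauchy--Schwarz gives $2\Lk[Y]\Phi(Y)\le2\sqrt{J_\Lk I_\eta}\,\Vc(Y)$. The factor $2$ is the $2$ of the cross term $2\eta\,\Phi(Y)$ in $\dot\Vc$ (see \eqref{eq:app_Ivol2}), not a normalization constant that can be absorbed. To recover the stated $a_\eta=-c^*+\sqrt{J_\Lk I_\eta}$ from this bound you would need $\sqrt{J_\Lk I_\eta}\le-c^*$. The proposition does not assume $c^*<0$. Even when $c^*<0$ the inequality fails in general: in \Cref{ssec:feasibility}, $-c^*\approx5\times10^{-6}$ while $\sqrt{J_\Lk I_\eta}\approx2\times10^{-2}$. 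So the fallback proves \eqref{eq:ISSw} only with $a_\eta$ replaced by $2\sqrt{J_\Lk I_\eta}$, a strictly weaker statement.

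Your first attempt, using $\int_{\Om_2^*}\Lk_2\Cb^{(2)}_\eta\dd x=-c^*$, is the right direction. What you need is the mechanism that makes it close: a product of two linear forms is a rank-two quadratic form. It is bounded by its top eigenvalue $\langle a,b\rangle+\|a\|\|b\|$, not by $2\|a\|\|b\|$.

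Concretely, let $\langle Y,Z\rangle_W\triangleq\sum_i\int_{\Om_i^*}Y^{(i)\top}\Wb_iZ^{(i)}\dd x$. Take $a^{(i)}\triangleq\Wb_i^{-1}\Lk_i^\top$, and $b^{(1)}\triangleq0$, $b^{(2)}\triangleq\Cb^{(2)}_\eta$. Then
\begin{itemize}
\item $\Lk[Y]=\langle a,Y\rangle_W$ and $\Phi(Y)=\langle b,Y\rangle_W$;
\item $\|a\|_W^2=J_\Lk$ and $\|b\|_W^2=I_\eta$;
\item $\langle a,b\rangle_W=\int_{\Om_2^*}\Lk_2\Cb^{(2)}_\eta\dd x=-c^*$, by \eqref{eq:Fgam} and \eqref{eq:cstar}.
\end{itemize}
Set $\hat a=a/\|a\|_W$ and $\hat b=b/\|b\|_W$; the case $a=0$ or $b=0$ is trivial. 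Writing $4\langle\hat a,Y\rangle_W\langle\hat b,Y\rangle_W=\langle\hat a+\hat b,Y\rangle_W^2-\langle\hat a-\hat b,Y\rangle_W^2$ gives
\[
  2\langle a,Y\rangle_W\langle b,Y\rangle_W
  \le\tfrac12\|a\|_W\|b\|_W\,\langle\hat a+\hat b,Y\rangle_W^2
  \le\bigl(\|a\|_W\|b\|_W+\langle a,b\rangle_W\bigr)\|Y\|_W^2 ,
\]
that is, $2\Lk[Y]\Phi(Y)\le a_\eta\|Y\|_W^2\le a_\eta\Vc(Y)$. The last step uses two facts: $a_\eta\ge0$, by Cauchy--Schwarz on $\langle a,b\rangle_W$; and $\Vc$ only adds the nonnegative term in $(\partial_xY_2)^2$. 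The paper argues the same way, with the Riesz representatives for the inner product of $\Vc$ itself and the largest eigenvalue of $ab^*+ba^*$. With this step in place, the rest of your argument yields \eqref{eq:ISSw}.
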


\begin{proof}
We follow the proof of \Cref{thm:ISS} in \appiss{}, but apply Young's
inequality only to the terms in $u$. This gives
\begin{equation}\label{eq:Vraw}
  \dot\Vc\le-(\bar\alpha-\nu_2)\Vc-\Dpar
  +2\eta\,\Phi(Y)+2\Psi(Y)\,\Go\Yo
  +(\sigma_u+\sigma_{{\rm vol},u})u^2+\sigma_{\dot u}\dot u^2,
\end{equation}
with
$\Phi(Y)\triangleq\int_{\Om_2^*}Y^{(2)\top}\Pb\,\Cb^{(2)}_\eta\dd x$ and
$\Psi(Y)\triangleq\int_{\Om_2^*}Y^{(2)\top}\Pb\,\Cb^{(2)}_{:,2}\dd x$. By the
Cauchy--Schwarz inequality, $\Phi(Y)^2\le I_\eta\Vc(Y)$ and
$\Psi(Y)^2\le I_o\Vc(Y)$.

\emph{Term in $\eta$.} With $\eta=w+\Lk[Y]$,
$2\eta\Phi=2w\Phi+2\Lk[Y]\Phi$. Young's inequality gives
$2w\Phi\le\vartheta\Vc+\vartheta^{-1}I_\eta w^2$. For the second term, let
$a$ and $b$ be the Riesz representatives of $\Lk[\cdot]$ and $\Phi$ for the
inner product of $\Vc$; $b=(0,\Cb^{(2)}_\eta)$ is supported on $\Om_2^*$. Then $2\Lk[Y]\Phi(Y)=2\langle a,Y\rangle\langle
b,Y\rangle\le\bigl(\langle a,b\rangle+\|a\|\|b\|\bigr)\Vc(Y)$, the largest
eigenvalue of the rank-two operator $ab^*+ba^*$, with
$\langle a,b\rangle=\Lk[b]=\int_{\Om_2^*}\Lk_2\Cb^{(2)}_\eta\dd x=-c^*$,
$\|a\|^2\le J_\Lk$ and $\|b\|^2=I_\eta$. Hence
\begin{equation}\label{eq:rank2}
  2\eta\,\Phi(Y)\le\bigl(a_\eta+\vartheta\bigr)\Vc(Y)
  +\frac{I_\eta}{\vartheta}\,w^2 .
\end{equation}

\emph{Term in $\Go\Yo$.} By Young's inequality with parameter
$\varsigma=\Sigma_b/(1-\delta)$ and \eqref{eq:Sigmab},
\begin{equation}\label{eq:tracebound}
  2\Psi(Y)\,\Go\Yo\le\varsigma I_o\Vc+\frac{\Sigma_b}{\varsigma}\Dpar
  =\frac{a_o}{1-\delta}\Vc+(1-\delta)\Dpar .
\end{equation}
Adding \eqref{eq:rank2} and \eqref{eq:tracebound} to \eqref{eq:Vraw} gives
\eqref{eq:ISSw}.
\end{proof}

The closed loop is thus a cascade: $w$ follows a scalar
equation driven by $u$ only, and by \eqref{eq:ISSw} the layer is
input-to-state stable with respect to $w$ and $u$. With the feedback
$u=\mathfrak{K}w$ of \Cref{ssec:feedback}, $\dot u=\mathfrak{K}\dot w
=-\kappa\mathfrak{K}w$, and the terms in $u^2$, $\dot u^2$ and $w^2$ of
\eqref{eq:ISSw} become $b_w w^2$, with
\begin{equation}\label{eq:bw}
  b_w\triangleq\sigma_w+(\sigma_u+\sigma_{{\rm vol},u}+\sigma_{\dot u}\kappa^2)
  \mathfrak{K}^2 .
\end{equation}
The cascade is exponentially stable if $\alpha_w>0$. This holds under the
following condition.

\begin{assumption}[Small gain]\label{cond:sg}
There are weights $p_1,p_2,p_{2x},p_3,p_4>0$, rates $\mu_1,\mu_2>0$ and Young
parameters $\nu_i,\nu_b>0$ such that \Cref{ass:dissip} and the conclusion of
\Cref{thm:ISS} hold, and
\begin{equation}\label{eq:smallgain}
  a_\eta+a_o<\bar\alpha,
\end{equation}
where $a_\eta$ and $a_o$ are given by \eqref{eq:constants} and $\bar\alpha$
by \eqref{eq:alphabar}, all computed with these weights.
\end{assumption}

Indeed, let $\alpha_{sg}\triangleq\bar\alpha-a_\eta-a_o>0$,
$\nu_2=\vartheta=\min(\alpha_{sg},\bar\alpha)/4$, so that
$\nu_2<\bar\alpha/3$, and $\delta\in(0,1)$ such that
$\delta a_o\le(1-\delta)\alpha_{sg}/4$. Since
$a_o/(1-\delta)=a_o+\delta a_o/(1-\delta)$, \eqref{eq:alphaw} gives
\begin{equation}\label{eq:alphawpos}
  \alpha_w=\alpha_{sg}-\nu_2-\vartheta-\frac{\delta a_o}{1-\delta}
  \ge\frac{\alpha_{sg}}{4}>0 .
\end{equation}

\Cref{cond:sg} is a sufficient condition, expressed through the weights of the
Lyapunov functional \eqref{eq:V}. Items~(i) and~(ii) of \Cref{thm:fwd_GES} below do not need it;
item~(iii) does.

\subsection{The feedback and the closed loop}
\label{ssec:feedback}

Assume $G\neq0$ and, for a chosen rate $\kappa>0$, apply
\begin{equation}\label{eq:feedback}
  u(t)=\mathfrak{K}\,w(t),\qquad
  \mathfrak{K}\triangleq-\frac{c^*+\kappa}{G},
\end{equation}
which turns \eqref{eq:dotw_closed} into $\dot w=-\kappa w$. In the original
variables the law reads
\begin{equation}\label{eq:law}
  v_m(t)=v_m^*+\mathfrak{K}\Bigl(\bigl(z_s(t)-z_s^*\bigr)
  -\int_0^{z_s^*}\!\Lk_1Y^{(1)}-\int_{z_s^*}^{z_b}\!\Lk_2Y^{(2)}\Bigr).
\end{equation}
This is a full-state feedback: it requires $z_s(t)$ and the profiles
$\epss(t,\cdot)$, $f_s(t,\cdot)$ on both zones, mapped onto the reference
configuration by \eqref{eq:diffeo}.

\begin{assumption}[Design assumptions]\label{ass:fwd}
The equilibrium $y^*$ is admissible and strictly subcritical on $\Om_2^*$;
\Cref{ass:dissip} holds; and $c^*$ is a root of \eqref{eq:cstar} with
$\Delta_z(c^*)\neq0$, $1+m_2(c^*)\neq0$ and $G(c^*)\neq0$.
\end{assumption}

Under \Cref{ass:fwd}, the feedback \eqref{eq:feedback} is well defined. The
stability of the layer also requires \Cref{cond:sg}.

We now write the closed loop as an evolution equation. Since
$\Lk_i\in W^{1,\infty}(\Om_i^*)$,
\begin{equation*}
  w(Y,\eta)\triangleq\eta-\Lk[Y]
\end{equation*}
is a bounded linear functional on $\Hs\times\mathbb{R}$, and
$w(t)=w(Y(t),\eta(t))$ along a solution. With the feedback
\eqref{eq:feedback}, the inlet condition $Y_2(t,0)=\epsin u(t)$ of
\eqref{eq:bc} is a condition on the state, since $Y_2\in H^1(\Om_1^*)$ is
continuous. The state space is the closed hyperplane
\begin{equation}\label{eq:Hcl}
  \Hs_{cl}\triangleq\bigl\{(Y,\eta)\in\Hs\times\mathbb{R}:\,
  Y_2(0)=\epsin\,\mathfrak{K}\,w(Y,\eta)\bigr\},
\end{equation}
and the closed-loop operator $\mathcal{A}_{cl}$ is the right-hand side of
\eqref{eq:plant} with $u=\mathfrak{K}w$, on the domain
\begin{equation}\label{eq:DAcl}
\begin{aligned}
  D(\mathcal{A}_{cl})=\bigl\{(Y,\eta)\in\Hs_{cl}:\,&
  Y_1,Y_3,Y_4\in H^1,\;Y_2\in H^2,\;
  \text{\eqref{eq:bc} with } u=\mathfrak{K}w \text{ and \eqref{eq:Kint} hold},\\
  &\lambda\,\partial_xY_2(0)=\kappa\,\epsin\,\mathfrak{K}\,w(Y,\eta)
  \bigr\}.
\end{aligned}
\end{equation}
The last condition is the compatibility condition of order one at $x=0$. By
the second line of \eqref{eq:zone1} at $x=0$ and $Y_2(t,0)=\epsin u(t)$, we
have $\lambda\partial_xY_2(t,0)=-\epsin\dot u(t)$, and
$\dot u=\mathfrak{K}\dot w=-\kappa\mathfrak{K}w$.

\begin{lemma}[Forwarding identity]\label{lem:fwdid}
Let \Cref{ass:fwd} hold. For every $(Y,\eta)\in D(\mathcal{A}_{cl})$,
\begin{equation}\label{eq:fwdid}
  w\bigl(\mathcal{A}_{cl}(Y,\eta)\bigr)=-\kappa\,w(Y,\eta)
  \qquad\text{and}\qquad
  \mathcal{A}_{cl}(Y,\eta)\in\Hs_{cl} .
\end{equation}
\end{lemma}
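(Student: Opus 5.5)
The proof repeats the computation of \Cref{lem:kernel} at the level of the generator, with no time derivative involved. Fix $(Y,\eta)\in D(\mathcal{A}_{cl})$. Write $(\hat Y,\hat\eta)\triangleq\mathcal{A}_{cl}(Y,\eta)$, where
\[
\hat Y^{(i)}=-\Ab^{(i)}\partial_xY^{(i)}+\Bb^{(i)}Y^{(i)}+\Cb^{(i)}_\eta\eta+\Cb^{(i)}_{:,2}\Go\Yo+\Db^{(i)}u
\qquad\text{and}\qquad
\hat\eta=\Go\Yo,
\]
with $u=\mathfrak{K}w(Y,\eta)$. Since $Y_1,Y_3,Y_4\in H^1$ and $Y_2\in H^2$, the traces $\Yo$ and $Y_3(z_b)$ are well defined. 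Moreover $\hat Y\in L^2\times H^1\times L^2\times L^2$, because the second row of $\hat Y^{(1)}$ only involves $\partial_xY_2\in H^1$ and $Y_2\in H^1$. Hence $w(\hat Y,\hat\eta)=\hat\eta-\Lk[\hat Y]$ makes sense.

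\emph{First identity.} I will compute $\Lk[\hat Y]$ term by term. The term $\int\Lk_i\Ab^{(i)}\partial_xY^{(i)}$ is integrated by parts, which is legitimate since $\Nb_i=\Lk_i\Ab^{(i)}\in W^{1,\infty}$ by \Cref{lem:kernel} and $Y^{(i)}\in H^1$. This reproduces exactly the right-hand side of \eqref{eq:dotw}, now with $u=\mathfrak{K}w$ and with the traces taken from the static state.

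The boundary values are then replaced using the conditions built into $D(\mathcal{A}_{cl})$. These are $Y_1(0)=0$, $Y_2(0)=\epsin u$, $Y_4(z_b)=Y_3(z_b)+\Db^{(bot)}u$, and $Y_3(z_s^*)=\Kint\Yo$. With these substitutions, identity \eqref{eq:bdry} holds verbatim.

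Next I apply the three steps of the kernel construction.
\begin{itemize}
\item The Sylvester ODE \eqref{eq:sylvester} with $c=c^*$ turns the volume integrals of $Y$ into $-c^*\Lk[Y]$.
\item The matching conditions \eqref{eq:matching} cancel the coefficients of $Y_3(z_b)$, $Y_1$, $Y_2$ and $Y_4$.
\item The definitions \eqref{eq:Fgam} of $\mathcal{F}$ and $\gamma_v$ absorb the $\eta$ and $\Go\Yo$ couplings.
\end{itemize}
The point requiring care is that $\gamma_v$ is self-consistent, i.e.\ $\gamma_v=1/(1+m_2)$ is exactly what makes the coefficient of $\Go\Yo$ equal to $\gamma_v$. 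This was already arranged in Step~2 and uses $1+m_2(c^*)\neq0$.

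After these cancellations,
\[
w(\hat Y,\hat\eta)=\mathcal{F}(c^*)\eta-c^*\Lk[Y]+G\,u=c^*w+G\,u,
\]
where the second equality uses $\mathcal{F}(c^*)=c^*$. Substituting $u=\mathfrak{K}w$ with $\mathfrak{K}=-(c^*+\kappa)/G$ gives $-\kappa w$. This step needs $G\neq0$, which is part of \Cref{ass:fwd}.

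\emph{Second identity.} I need $\hat Y_2(0)=\epsin\mathfrak{K}\,w(\hat Y,\hat\eta)$. By the first identity the right-hand side equals $-\kappa\epsin\mathfrak{K}w(Y,\eta)$. For the left-hand side, evaluate the second row of \eqref{eq:zone1} at $x=0$; this is meaningful because $Y_2\in H^2$ gives continuous $\partial_xY_2$, and $\Cb^{(1)}=0$. It gives
\[
\hat Y_2(0)=-\lambda\partial_xY_2(0)-\gamma Y_2(0)+\gamma\epsin u .
\]
The last two terms cancel, since $Y_2(0)=\epsin u$. The remaining term equals $-\kappa\epsin\mathfrak{K}w$ by the order-one compatibility condition in \eqref{eq:DAcl}. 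Thus the two sides agree.

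The main obstacle is bookkeeping rather than analysis. One must check that every manipulation in \Cref{lem:kernel} uses only spatial integration by parts and the boundary and interface relations, and never the time evolution. In particular, \eqref{eq:RHlin} must enter only through $\hat\eta=\Go\Yo$, and the $\Go\Yo$ term in the compression-zone equation, which comes from $\dot z_s$, must be handled by $\gamma_v$ exactly as before.
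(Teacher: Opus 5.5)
Your proposal is correct and follows the paper's proof. You rerun the computation of \eqref{eq:dotw} and \Cref{lem:kernel} on the static state, using only the regularity, \eqref{eq:bc} and \eqref{eq:Kint} built into $D(\mathcal{A}_{cl})$, to get $w(\mathcal{A}_{cl}(Y,\eta))=c^*w+G\mathfrak{K}w=-\kappa w$; you then read off $\hat Y_2(0)=-\lambda\partial_xY_2(0)=-\kappa\epsin\mathfrak{K}w$ from the second row of \eqref{eq:zone1} and the order-one compatibility condition, exactly as the paper does, and your check that $\hat Y\in L^2\times H^1\times L^2\times L^2$ (so that $w(\hat Y,\hat\eta)$ is defined) makes explicit a point the paper leaves implicit.
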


\begin{proof}
The computation of \eqref{eq:dotw} and \Cref{lem:kernel} only uses the
regularity of $D(\mathcal{A}_{cl})$, \eqref{eq:bc} and \eqref{eq:Kint}. It
gives $w(\mathcal{A}_{cl}(Y,\eta))=c^*w(Y,\eta)+Gu$ with
$u=\mathfrak{K}w(Y,\eta)$, and $c^*+G\mathfrak{K}=-\kappa$ by
\eqref{eq:feedback}. Let
$(\tilde Y,\tilde\eta)\triangleq\mathcal{A}_{cl}(Y,\eta)$. By the second line
of \eqref{eq:zone1} at $x=0$, $Y_2(0)=\epsin u$, the last condition of
\eqref{eq:DAcl} and the first identity,
\begin{equation*}
  \tilde Y_2(0)=-\lambda\partial_xY_2(0)-\gamma\epsin u+\gamma\epsin u
  =-\kappa\,\epsin\mathfrak{K}\,w(Y,\eta)
  =\epsin\mathfrak{K}\,w(\tilde Y,\tilde\eta),
\end{equation*}
so $(\tilde Y,\tilde\eta)\in\Hs_{cl}$.
\end{proof}

\begin{theorem}[Closed-loop behaviour]\label{thm:fwd_GES}
Let \Cref{ass:fwd} hold and $(Y_0,\eta_0)\in D(\mathcal{A}_{cl})$.
\begin{enumerate}[label=(\roman*),leftmargin=2.2em,itemsep=2pt]
\item \emph{(Well-posedness.)} $\mathcal{A}_{cl}$ generates a
$C_0$-semigroup on $\Hs_{cl}$, and the closed loop \eqref{eq:plant},
\eqref{eq:feedback} has a unique strong solution
$(Y,\eta)\in C^1(\Rp;\Hs_{cl})$ with $(Y,\eta)(0)=(Y_0,\eta_0)$.

\item \emph{(Decay of $w$.)} For every $t\ge0$,
\begin{equation}\label{eq:wexact}
  w(t)=e^{-\kappa t}w(0),\qquad u(t)=\mathfrak{K}e^{-\kappa t}w(0),
\end{equation}
and
\begin{equation}\label{eq:etasplit}
  |\eta(t)|\le e^{-\kappa t}|w(0)|+\sqrt{J_\Lk\,\Vc(Y(t))} .
\end{equation}
This also holds for the mild solutions with $(Y_0,\eta_0)\in\Hs_{cl}$.

\item \emph{(Exponential stability.)} If, in addition, \Cref{cond:sg}
holds, there are $C,\beta>0$ such that, for every
$(Y_0,\eta_0)\in D(\mathcal{A}_{cl})$ and every $t\ge0$,
\begin{equation}\label{eq:GES}
  \|Y(t,\cdot)\|_\Hs^2+\eta(t)^2\le Ce^{-\beta t}
  \bigl(\|Y_0\|_\Hs^2+\eta_0^2\bigr).
\end{equation}
Since $D(\mathcal{A}_{cl})$ is dense in $\Hs_{cl}$, \eqref{eq:GES} also
holds for the mild solutions with $(Y_0,\eta_0)\in\Hs_{cl}$.
\end{enumerate}
\end{theorem}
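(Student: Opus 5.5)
For (i), I would avoid attacking $\mathcal{A}_{cl}$ directly and instead pass to the coordinates $(Y,w)$. The map $(Y,\eta)\mapsto(Y,w)$, $w=\eta-\Lk[Y]$, is a bounded isomorphism of $\Hs\times\mathbb{R}$, because $\Lk_i\in W^{1,\infty}$ by \Cref{lem:kernel}. In these coordinates $\Hs_{cl}$ becomes $\{(Y,w):Y_2(0)=\epsin\mathfrak{K}w\}$. By \Cref{lem:fwdid} the closed-loop operator is block triangular: the $w$-component is $-\kappa w$, and the $Y$-component is the open-loop layer operator with inputs $\eta=w+\Lk[Y]$ and $u=\mathfrak{K}w$.

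To handle the inhomogeneous condition at $x=0$, I would shift. Choose a fixed smooth profile $\phi$ on $\Om_1^*$ with $\phi_2(0)=\epsin$ and $\lambda\partial_x\phi_2(0)=-\kappa\epsin$, so that both the trace condition and the first-order condition of \eqref{eq:DAcl} are absorbed. Set $\hat Y=Y-\mathfrak{K}w\,\phi$. Then $\hat Y$ satisfies homogeneous inlet conditions, and the remaining couplings are bounded perturbations of a single operator $\mathcal{A}_0$. Here $\mathcal{A}_0$ is the layer with boundary conditions \eqref{eq:bc} at $u=0$ and \eqref{eq:Kint}, plus the trace term $\Cb^{(2)}_{:,2}\Go\Yo$ and the $\eta$ term. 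The shift produces only terms $\mathfrak{K}w(\cdot)$ with smooth coefficients, and the term $\Cb^{(2)}_\eta\Lk[Y]$ is bounded on $\Hs$.

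The term $\Cb^{(2)}_{:,2}\Go\Yo$ involves boundary traces, so it is not bounded. For it I would show $\mathcal{A}_0$ is quasi-dissipative for the inner product induced by $\Vc$, directly from \eqref{eq:Vraw} with $u=0$: the trace term is absorbed by $\Dpar$ via \eqref{eq:tracebound}. I would then check the range condition $\lambda-\mathcal{A}_0$ onto for large $\lambda$ by solving the resolvent ODE system. This system is triangular on $\Om_1^*$ (Jordan block) and a two-point problem on $\Om_2^*$ whose reflections have product $k^2<1$, perturbed by a rank-one trace term. Lumer--Phillips then gives the semigroup, and bounded perturbation theory returns to $\mathcal{A}_{cl}$. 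Classical solutions for data in $D(\mathcal{A}_{cl})$ follow. \textbf{I expect this step to be the main obstacle}: specifically, verifying the range condition together with the $H^2$ regularity of $Y_2$ and the compatibility condition at $x=0$.

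For (ii), $w(t)=w(Y(t),\eta(t))$ is $C^1$ along strong solutions. Applying \Cref{lem:fwdid} gives $\dot w=-\kappa w$, hence \eqref{eq:wexact}. The bound \eqref{eq:etasplit} follows from $\eta=w+\Lk[Y]$ and $\Lk[Y]^2\le J_\Lk\Vc(Y)$. Both sides of these relations are continuous on $\Hs_{cl}$, so they extend to mild solutions by density.

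For (iii), I would apply \Cref{prop:ISSw} with the parameters chosen after \Cref{cond:sg}, so that $\alpha_w\ge\alpha_{sg}/4$, and substitute $u=\mathfrak{K}w$ and $\dot u=-\kappa\mathfrak{K}w$. This gives $\dot\Vc\le-\alpha_w\Vc+b_we^{-2\kappa t}w(0)^2$ with $b_w$ as in \eqref{eq:bw}. Integrating,
\[
\Vc(Y(t))\le e^{-\alpha_w t}\Vc(Y_0)+\frac{b_w\,w(0)^2}{|\alpha_w-2\kappa|}\bigl(e^{-\min(\alpha_w,2\kappa)t}\bigr),
\]
with a factor $t$ in place of the denominator in the resonant case $\alpha_w=2\kappa$. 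I would then bound $\eta^2$ via \eqref{eq:etasplit}, bound $w(0)^2\le2\eta_0^2+2J_\Lk\Vc(Y_0)$, and use \eqref{eq:normequiv} to obtain \eqref{eq:GES} with any $\beta<\min(\alpha_w,2\kappa)$. The extension to mild solutions follows by density of $D(\mathcal{A}_{cl})$ and continuity of the semigroup.
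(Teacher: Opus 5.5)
Your plan is sound but differs from the paper's in items (i) and (iii).

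\emph{Item (i).} The paper applies Lumer--Phillips directly to $\mathcal{A}_{cl}$, with the inner product of $\Uc=\Vc+\rho w^2$. Quasi-dissipativity is \eqref{eq:ISSw}, read as a quadratic-form inequality on $D(\mathcal{A}_{cl})$, combined with $w(\mathcal{A}_{cl}x)=-\kappa w(x)$. For the range, the paper fixes $u$ in advance from $w(f,g)$ by \eqref{eq:uresolvent} and solves the resolvent system by a contraction on $(Y_4(z_s^*),Y_4(z_b))$. It then checks a posteriori that $u=\mathfrak{K}w(x)$, which needs $s\neq c^*$, and that the first-order condition of \eqref{eq:DAcl} holds. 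Density is proved by hand.

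You instead use the triangular structure in $(Y,w)$, lift the boundary input, and need Lumer--Phillips only for a homogeneous open-loop operator $\mathcal{A}_0$. This is more modular in three ways. Generation does not involve $\mathfrak{K}$. The a posteriori check disappears, since $f_2(0)=0$ gives $\partial_x\hat Y_2(0)=0$ automatically in the resolvent. Density is automatic, since an m-dissipative operator on a Hilbert space is densely defined.

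The step you flag as the main obstacle is the paper's computation \eqref{eq:res}--\eqref{eq:sstar} with $u=0$ and the $\eta$-coupling removed. The contraction for large $s$ comes from the transport factor $e_s$ and the $1/s$ terms, not from $k^2<1$. In exchange, the paper's route needs no lifting and uses one inner product for both generation and decay.

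\emph{Item (iii).} Your variation-of-constants estimate with the exact $w(t)=e^{-\kappa t}w(0)$ is equivalent to the paper's composite functional $\Uc$ with $\rho>b_w/(2\kappa)$. Both rest on $\alpha_w>0$ from \Cref{cond:sg}. Both also apply \Cref{prop:ISSw} along strong solutions, which is justified because its proof is a quadratic-form inequality on $D(\mathcal{A}_{cl})$; you should say so explicitly.

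\emph{Corrections to the lifting.} Two are needed.

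First, the sign at the inlet. Since $\lambda\partial_xY_2(0)=-\epsin\dot u=\kappa\epsin\mathfrak{K}w$, the profile must satisfy $\lambda\partial_x\phi_2(0)=+\kappa\epsin$. The source created by the shift has second component $(\kappa\epsin-\lambda\partial_x\phi_2(0))\mathfrak{K}w$ at $x=0$. With your sign it does not vanish, so $\hat Y$ does not satisfy a homogeneous first-order condition, and the perturbation does not map into $\{\hat Y_2(0)=0\}$.

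Second, the bottom condition. The input $u=\mathfrak{K}w$ also enters $Y_4(z_b)=Y_3(z_b)+\Db^{(bot)}u$ in \eqref{eq:bc}. With $\phi$ supported on $\Om_1^*$ only, the domain of the $\hat Y$-block still depends on $w$. So the claim that the operator is $\mathcal{A}_0\oplus(-\kappa)$ plus a bounded operator, with $\mathcal{A}_0$ carrying \eqref{eq:bc} at $u=0$, is false as written. Take $\phi$ on both zones, with $\phi_4(z_b)-\phi_3(z_b)=\Db^{(bot)}$, and let it vanish near $x=z_s^*$ so that \eqref{eq:Kint} and $\Go\Yo$ are unchanged. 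The extra source on $\Om_2^*$ is then $w$ times a fixed $L^2$ function, hence bounded, and your argument goes through.
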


The control $u$ is $C^\infty$ in time, and the law \eqref{eq:law} uses
neither a trace nor a spatial derivative of the state.

\begin{proof}
Items (i) and (iii) are based on the derivative along the closed loop of
\begin{equation}\label{eq:Ucal}
  \Uc(Y,\eta)\triangleq\Vc(Y)+\rho\,w^2,\qquad w=\eta-\Lk[Y],\quad\rho>0:
\end{equation}
for item (i), this derivative must be bounded by a multiple of $\Uc$, and for item (iii), it must be negative.

\paragraph*{Item (i).} By \Cref{lem:fwdid},
$\mathcal{A}_{cl}$ is an operator on the Hilbert space $\Hs_{cl}$, which we
equip with the inner product of $\Uc$, equivalent to the one of
$\Hs\times\mathbb{R}$ by \eqref{eq:normequiv}, $\Lk[Y]^2\le J_\Lk\Vc(Y)$ and
$\eta^2\le2w^2+2J_\Lk\Vc(Y)$. In the following we check the three hypotheses
of the Lumer--Phillips theorem \cite[Thm.~1.4.3]{pazy2012semigroups}.

\emph{Density.} $D(\mathcal{A}_{cl})$ is dense in $\Hs_{cl}$. Given
$(Y,\eta)\in\Hs_{cl}$, every modification $Z$ of $Y$ is accompanied by adding
$\Lk[Z]$ to $\eta$, which is small since $\Lk$ is bounded on $\Hs$, and leaves
$w$, hence the constraint \eqref{eq:Hcl}, unchanged. We first approximate
$Y_1$ in $L^2$ and $Y_2$ in $H^1$ by $H^1$ and $H^2$ functions with $Y_2(0)$
unchanged; the value of $\partial_xY_2(0)$ required by \eqref{eq:DAcl} is
then obtained by adding $\phi_r(x)\triangleq a\,x(1-x/r)^2$ on $[0,r]$ (and
$0$ elsewhere) to $Y_2$, which leaves $Y_2(0)$ unchanged, sets
$\partial_xY_2(0)$ to any prescribed value, and has norm $O(\sqrt r)$ in
$H^1$. We then approximate $Y_3$, $Y_4$ in $L^2$ by $H^1$ functions whose
traces satisfy \eqref{eq:Kint} and \eqref{eq:bc} with the traces of the new
$Y_1$, $Y_2$, which is possible since these conditions only concern values
at the endpoints.

\emph{Dissipativity.} Let $x=(Y,\eta)\in D(\mathcal{A}_{cl})$ and $w=w(x)$.
By \eqref{eq:Ucal}, $2\langle\mathcal{A}_{cl}x,x\rangle_\Uc$ is the
derivative of $\Vc(Y)+\rho w^2$ in the direction $\mathcal{A}_{cl}x$.  The proof of \eqref{eq:ISSw} in \appiss{} is a pointwise computation that
only uses the equations and boundary conditions satisfied by $x\in D(\mathcal{A}_{cl})$,
with $\partial_tY$ replaced by the $Y$-component of $\mathcal{A}_{cl}x$; so
\eqref{eq:ISSw} holds as an inequality between quadratic forms on
$D(\mathcal{A}_{cl})$, with $u=\mathfrak{K}w$ and $\dot u=-\kappa u$ (last
condition of \eqref{eq:DAcl}). since $w(\mathcal{A}_{cl}x)=-\kappa w$ by
\eqref{eq:fwdid}, we have, with $b_w$ given by \eqref{eq:bw},
\begin{equation}\label{eq:quasidissip}
  2\langle\mathcal{A}_{cl}x,x\rangle_\Uc
  \le-\alpha_w\Vc(Y)-\delta\Dpar(Y)+b_w w^2-2\rho\kappa w^2
  \le2\omega\,\Uc(x),
\end{equation}
with $2\omega\triangleq\max\{0,-\alpha_w,b_w/\rho\}$. Hence $\mathcal{A}_{cl}-\omega\mathbf{I}$ is dissipative.

\emph{Range.} Let $s>\omega$ and $(f,g)\in\Hs_{cl}$, with
$f=(f_1,f_2,f_3,f_4)$ and $f^{(2)}=(f_3,f_4)^\top$. If $x=(Y,\eta)\in
D(\mathcal{A}_{cl})$ solves $sx-\mathcal{A}_{cl}x=(f,g)$, then, by
\eqref{eq:fwdid},
\begin{equation}\label{eq:uresolvent}
  (s+\kappa)\,w(x)=w(f,g)=g-\Lk[f],\qquad
  u=\mathfrak{K}\,w(x)=\frac{\mathfrak{K}\bigl(g-\Lk[f]\bigr)}{s+\kappa}.
\end{equation}

With this $u$ and $\chi\triangleq\Go\Yo$, the equation
$sx-\mathcal{A}_{cl}x=(f,g)$ reads, by \eqref{eq:plant},
\begin{subequations}\label{eq:res}
\begin{align}
  &\lambda\partial_xY_2=-(s+\gamma)Y_2+\gamma\epsin u+f_2,
  \qquad Y_2(0)=\epsin u,\label{eq:res1}\\
  &\lambda\partial_xY_1=-sY_1-\partial_xY_2+f_1,
  \qquad Y_1(0)=0,\label{eq:res2}\\
  &\Ab^{(2)}\partial_xY^{(2)}=(\Bb^{(2)}-s\mathbf{I})Y^{(2)}+\varphi,
  \qquad \varphi\triangleq\Cb^{(2)}_\eta\eta+\Cb^{(2)}_{:,2}\chi+\Db^{(2)}u
  +f^{(2)},\label{eq:res3}\\
  &Y_3(z_s^*)=\Kint\Yo,\quad Y_4(z_b)=Y_3(z_b)+\Db^{(bot)}u,
  \quad\chi=\Go\Yo,\quad s\eta-\chi=g.\label{eq:res4}
\end{align}
\end{subequations}
On $\Om_1^*$, \eqref{eq:res1} and \eqref{eq:res2} give
\begin{equation}\label{eq:resOm1}
\begin{aligned}
  Y_2(x)&=e^{-(s+\gamma)x/\lambda}\epsin u
  +\frac{1}{\lambda}\int_0^xe^{-(s+\gamma)(x-\xi)/\lambda}
  \bigl(\gamma\epsin u+f_2(\xi)\bigr)\dd\xi,\\
  Y_1(x)&=\frac{1}{\lambda}\int_0^xe^{-s(x-\xi)/\lambda}
  \bigl(f_1(\xi)-\partial_xY_2(\xi)\bigr)\dd\xi,
\end{aligned}
\end{equation}
so that $Y_2\in H^2$ and $Y_1\in H^1$, since $f_2\in H^1$ and $f_1\in L^2$.
With $Y_3^0\triangleq(\lambda^2Y_1+2\lambda Y_2)(z_s^*)/\lambda_3^2$ and
$\chi_0\triangleq G_{o,1}Y_1(z_s^*)+G_{o,2}Y_2(z_s^*)$, known from
\eqref{eq:resOm1}, \eqref{eq:Kint} and \eqref{eq:kinematics} turn the
conditions at $z_s^*$ in \eqref{eq:res4} into
\begin{equation}\label{eq:res5}
  Y_3(z_s^*)=Y_3^0-k^2Y_4(z_s^*),\qquad
  \chi=\chi_0+G_{o,4}Y_4(z_s^*),\qquad \eta=(g+\chi)/s .
\end{equation}
On $\Om_2^*$, let
\begin{equation*}
\begin{gathered}
  \Lambda_-\triangleq\min_{\overline{\Om_2^*}}\min(\lambda_3,|\lambda_4|),
  \quad
  \Lambda_+\triangleq\max_{\overline{\Om_2^*}}\max(\lambda_3,|\lambda_4|),
  \quad
  \bar b\triangleq\max_{\overline{\Om_2^*}}\max\bigl(|\Bb^{(2)}_{12}|,
  |\Bb^{(2)}_{21}|\bigr),\\
  \bar\ell\triangleq\exp\int_{z_s^*}^{z_b}\Bigl(\frac{|\Bb^{(2)}_{11}|}{\lambda_3}
  +\frac{|\Bb^{(2)}_{22}|}{|\lambda_4|}\Bigr)\dd\xi,\quad
  \bar c\triangleq|G_{o,4}|\max_{\overline{\Om_2^*},\,i}
  \bigl(|\Cb^{(2)}_{i,1}|+|\Cb^{(2)}_{i,2}|\bigr),\\
  \mathfrak{m}\triangleq\frac{\bar\ell\,\Lambda_+}{\Lambda_-},\qquad
  e_s\triangleq\bar\ell\,e^{-s(z_b-z_s^*)/\Lambda_+},
\end{gathered}
\end{equation*}
and
\begin{equation*}
  h_3(x)=\exp\int_{z_s^*}^x\frac{\Bb^{(2)}_{11}-s}{\lambda_3}\dd\xi,\qquad
  h_4(x)=\exp\int_{z_s^*}^x\frac{\Bb^{(2)}_{22}-s}{\lambda_4}\dd\xi .
\end{equation*}
For $z_s^*\le\xi\le x\le z_b$,
\begin{equation*}
  0<\frac{h_3(x)}{h_3(\xi)}\le\bar\ell\,e^{-s(x-\xi)/\Lambda_+},\qquad
  0<\frac{h_4(\xi)}{h_4(x)}\le\bar\ell\,e^{-s(x-\xi)/\Lambda_+},
\end{equation*}
hence, for every $x\in\overline{\Om_2^*}$,
\begin{equation}\label{eq:kernel}
  \int_{z_s^*}^x\frac{h_3(x)}{h_3(\xi)}\frac{\dd\xi}{\lambda_3}\le\frac{\mathfrak{m}}{s},
  \qquad
  \int_x^{z_b}\frac{h_4(x)}{h_4(\xi)}\frac{\dd\xi}{|\lambda_4|}\le\frac{\mathfrak{m}}{s},
  \qquad
  h_3(z_b)\le e_s,\qquad\frac{h_4(z_s^*)}{h_4(z_b)}\le e_s .
\end{equation}
\emph{Step 1: given inflow values.} For $q_3,q_4\in\mathbb{R}$, the solutions
of \eqref{eq:res3} with $Y_3(z_s^*)=q_3$ and $Y_4(z_b)=q_4$ are the fixed
points of
\begin{equation}\label{eq:fp}
\begin{aligned}
  Y_3(x)&=h_3(x)\,q_3+\int_{z_s^*}^x\frac{h_3(x)}{h_3(\xi)}
  \frac{(\Bb^{(2)}_{12}Y_4+\varphi_3)(\xi)}{\lambda_3(\xi)}\dd\xi,\\
  Y_4(x)&=\frac{h_4(x)}{h_4(z_b)}\,q_4-\int_x^{z_b}\frac{h_4(x)}{h_4(\xi)}
  \frac{(\Bb^{(2)}_{21}Y_3+\varphi_4)(\xi)}{\lambda_4(\xi)}\dd\xi .
\end{aligned}
\end{equation}
By \eqref{eq:kernel}, the right-hand side of \eqref{eq:fp} is a contraction
of $C^0(\overline{\Om_2^*})^2$, for the norm
$\|Y^{(2)}\|_\infty\triangleq\max(\|Y_3\|_\infty,\|Y_4\|_\infty)$, with constant
$\bar b\mathfrak{m}/s\le\frac{1}{2}$ as soon as $s\ge2\bar b\mathfrak{m}$; its fixed point is in $H^1$
by \eqref{eq:res3}. If moreover $\varphi$ is continuous, \eqref{eq:fp} and
\eqref{eq:kernel} give
\begin{gather}
  \|Y^{(2)}\|_\infty\le\bar\ell\,(|q_3|+|q_4|)+\frac{\mathfrak{m}}{s}\bigl(\bar b\,\|Y^{(2)}\|_\infty
  +\|\varphi\|_\infty\bigr)
  \le2\bar\ell\,(|q_3|+|q_4|)+\frac{2\mathfrak{m}}{s}\|\varphi\|_\infty,\label{eq:Nbound}\\
  |Y_4(z_s^*)|\le e_s|q_4|+\frac{\mathfrak{m}}{s}\bigl(\bar b\,\|Y^{(2)}\|_\infty
  +\|\varphi\|_\infty\bigr),\label{eq:outbound}\\
  |Y_3(z_b)|\le e_s|q_3|+\frac{\mathfrak{m}}{s}\bigl(\bar b\,\|Y^{(2)}\|_\infty
  +\|\varphi\|_\infty\bigr).\nonumber
\end{gather}
\emph{Step 2: the boundary values.} By \eqref{eq:res5} and the bottom
condition of \eqref{eq:res4}, $x$ solves \eqref{eq:res} if and only if
$(\varpi_1,\varpi_2)=(Y_4(z_s^*),Y_4(z_b))$ is a fixed point of the affine map
\begin{equation*}
  \Psi_s:\ (\varpi_1,\varpi_2)\longmapsto\bigl(Y_4(z_s^*),\,
  Y_3(z_b)+\Db^{(bot)}u\bigr),
\end{equation*}
where $Y^{(2)}$ solves \eqref{eq:fp} with $q_3=Y_3^0-k^2\varpi_1$,
$q_4=\varpi_2$, and $\varphi$ computed with $\chi=\chi_0+G_{o,4}\varpi_1$ and
$\eta=(g+\chi)/s$. The linear part of $\Psi_s$ is obtained with
$Y_3^0=\chi_0=g=u=0$ and $f^{(2)}=0$: then $q_3=-k^2\varpi_1$, $q_4=\varpi_2$
and, for $s\ge1$, $\|\varphi\|_\infty\le\bar c\,|\varpi_1|$ (the condition $s\ge1$ is used for the term
$\Cb^{(2)}_\eta\eta$ with $\eta=\chi/s$). With $k<1$ (since
$v_s^*>0$), $2\bar b\mathfrak{m}/s\le1$ and $|\varpi|\triangleq\max(|\varpi_1|,
|\varpi_2|)$, \eqref{eq:Nbound} gives
$\bar b\,\|Y^{(2)}\|_\infty+\|\varphi\|_\infty\le4\bar b\bar\ell\,|\varpi|+2\bar c\,|\varpi|$,
and \eqref{eq:outbound} gives, for the linear part,
\begin{equation*}
  |Y_4(z_s^*)|,\ |Y_3(z_b)|\ \le\ \Bigl(e_s
  +\frac{4\mathfrak{m}(\bar b\bar\ell+\bar c)}{s}\Bigr)|\varpi| .
\end{equation*}
Hence $\Psi_s$ is a contraction of constant at most $\frac{1}{2}$ on
$\mathbb{R}^2$ as soon as
\begin{equation}\label{eq:sstar}
  s\ge s_*\triangleq\max\Bigl\{1,\ 2\bar b\mathfrak{m},\ 16\mathfrak{m}(\bar b\bar\ell+\bar c),\
  \frac{\Lambda_+\ln(4\bar\ell)}{z_b-z_s^*}\Bigr\},
\end{equation}
since then $e_s\le\frac{1}{4}$ and $4\mathfrak{m}(\bar b\bar\ell+\bar c)/s\le\frac{1}{4}$.
For $s\ge s_*$, \eqref{eq:res}, with the input $u$ of
\eqref{eq:uresolvent}, therefore has a unique solution $x=(Y,\eta)$, with
$Y_1,Y_3,Y_4\in H^1$ and $Y_2\in H^2$. It lies in $D(\mathcal{A}_{cl})$.
First, $u=\mathfrak{K}w(x)$: the computation of \eqref{eq:dotw} and
\Cref{lem:kernel}, applied to \eqref{eq:res} with the input $u$, gives
$(s-c^*)\,w(x)-Gu=w(f,g)$; with \eqref{eq:uresolvent} and
$G\mathfrak{K}=-(c^*+\kappa)$,
\begin{equation*}
  (s-c^*)\bigl(\mathfrak{K}w(x)-u\bigr)
  =\mathfrak{K}w(f,g)+G\mathfrak{K}u-(s-c^*)u
  =(s+\kappa)u-(c^*+\kappa)u-(s-c^*)u=0,
\end{equation*}
so $u=\mathfrak{K}w(x)$ if $s\neq c^*$, and $Y_2(0)=\epsin\mathfrak{K}w(x)$.
Second, since $(f,g)\in\Hs_{cl}$, $f_2(0)=\epsin\mathfrak{K}w(f,g)
=\epsin(s+\kappa)u$, and \eqref{eq:res1} at $x=0$ gives
\begin{equation*}
  \lambda\partial_xY_2(0)=-s\epsin u+f_2(0)=\kappa\epsin u
  =\kappa\epsin\mathfrak{K}\,w(x),
\end{equation*}
which is the last condition of \eqref{eq:DAcl}. So, for
$s>\max(\omega,c^*)$ with $s\ge s_*$, $s-\mathcal{A}_{cl}$ is onto
$\Hs_{cl}$.

\emph{Conclusion.} By the Lumer--Phillips theorem, $\mathcal{A}_{cl}$
generates a $C_0$-semigroup on $\Hs_{cl}$, and for
$(Y_0,\eta_0)\in D(\mathcal{A}_{cl})$ the orbit $x(t)$ is the unique strong
solution. Along it, by \eqref{eq:fwdid},
\begin{equation*}
  \dot w(t)=w\bigl(\mathcal{A}_{cl}x(t)\bigr)=-\kappa\,w(t),\qquad
  u(t)=\mathfrak{K}\,e^{-\kappa t}w(0),
\end{equation*}
so $u$ is $C^\infty$, as \Cref{thm:ISS} requires.

\paragraph*{Item (ii).} By the conclusion of item (i), $\dot w=-\kappa w$,
which gives \eqref{eq:wexact}. For $(Y_0,\eta_0)\in\Hs_{cl}$, the mild
solution $x(t)$ satisfies $\int_0^tx(\sigma)\dd\sigma\in D(\mathcal{A}_{cl})$ and
$x(t)-x(0)=\mathcal{A}_{cl}\int_0^tx(\sigma)\dd\sigma$
\cite[Thm.~1.2.4]{pazy2012semigroups}; applying $w$ and \eqref{eq:fwdid}
gives $w(x(t))-w(x(0))=-\kappa\int_0^tw(x(\sigma))\dd\sigma$, hence
\eqref{eq:wexact}. Since $z_s-z_s^*=\eta=w+\Lk[Y]$ and
$\Lk[Y]^2\le J_\Lk\Vc(Y)$ (\Cref{ssec:issw}), we get \eqref{eq:etasplit}.

\paragraph*{Item (iii).} Along the closed loop, $\dot w=-\kappa w$ by item
(ii), so $\frac{d}{dt}(\rho w^2)=-2\rho\kappa w^2$. By \Cref{prop:ISSw} with
$u=\mathfrak{K}w$ and $\dot u=-\kappa\mathfrak{K}w$, and by \eqref{eq:bw},
\begin{equation}\label{eq:assembly}
  \dot\Uc\le-\alpha_w\Vc-\delta\,\Dpar-\bigl(2\rho\kappa-b_w\bigr)w^2 .
\end{equation}
We take $\nu_2$, $\vartheta$ and $\delta$ such that $\alpha_w>0$, which is
possible under \eqref{eq:smallgain} (for instance as in \eqref{eq:alphawpos}),
and then $\rho>b_w/(2\kappa)$
($b_w$ depends on $\nu_2$ and $\vartheta$, not on $\rho$). Since
$\Dpar\ge0$, \eqref{eq:assembly} gives
\begin{equation*}
  \dot\Uc\le-\beta\,\Uc,\qquad
  \beta\triangleq\min\Bigl(\alpha_w,\,2\kappa-\frac{b_w}{\rho}\Bigr)>0 .
\end{equation*}
By \eqref{eq:normequiv}, $\Lk[Y]^2\le J_\Lk\Vc(Y)$ and
$(a+b)^2\le2a^2+2b^2$,
\begin{equation*}
  \|Y\|_\Hs^2+\eta^2\le C_1\,\Uc(Y,\eta),\qquad
  \Uc(Y,\eta)\le C_2\bigl(\|Y\|_\Hs^2+\eta^2\bigr),
\end{equation*}
with $C_1\triangleq\max\bigl(c_1^{-1}+2J_\Lk,\,2/\rho\bigr)$ and
$C_2\triangleq\max\bigl((1+2\rho J_\Lk)c_2,\,2\rho\bigr)$. This gives
\eqref{eq:GES} with $C=C_1C_2$.
\end{proof}

\section{An illustrative example}
\label{sec:num}

The design of \Cref{sec:fwd} applies to \eqref{eq:model} with any closures
$\sigma_e$ and $\mathcal{R}$ satisfying the assumptions of \Cref{sec:model}.
We apply it to a laboratory-scale continuous settler with a bed of $2$~mm
compact polystyrene beads, chosen because every hypothesis of
\Cref{thm:fwd_GES}, including \Cref{cond:sg}, can be checked on
it.

\subsection{Setup}
\label{ssec:setup}

The settler, of depth $z_b=1$~m, is fed at the top with a suspension of compact (non-expanded) polystyrene beads, $\rho_s=1050$~kg/m$^3$, of diameter $2$~mm, in water
($\rho_f=1000$~kg/m$^3$, $g=9.81$~m/s$^2$), at the solid
fraction $\epsin=0.11$, and drained at the bottom by a pump whose velocity
$v_m$ is the control. We choose beads whose density is close to that of
water: the apparent gravity $K=g(\rho_s-\rho_f)/\rho_s$ is $0.4671$~m/s$^2$
here against $6.108$~m/s$^2$ for a mineral grain ($\rho_s=2650$~kg/m$^3$).
The relaxation rate $\gamma=15.18$~s$^{-1}$ is fixed so that the
free-settling velocity $K/\gamma$ equals the terminal velocity of an isolated
bead, $v_0=3.077\times10^{-2}$~m/s at a particle Reynolds number of $62$.
 Above the critical threshold
$\eps_c=0.55$ the beads are in contact and carry an interparticle effective stress; we take
the constitutive laws
\begin{equation}\label{eq:closures}
  \sigma_e(\epss)= \begin{cases}
    0, & \epss\le\eps_c,\\[2pt]
    \frac{\sigma_0}{\rho_s\eps_c^{n_s}}
  \bigl(\epss^{n_s}-\eps_c^{n_s}\bigr), & \epss>\eps_c,
  \end{cases}
  \qquad
  \mathcal{R}(\epss)=\begin{cases}
    \gamma, & \epss\le\eps_c,\\[2pt]
    \gamma\,\dfrac{\epss(1-\eps_c)}{(1-\epss)\eps_c}, & \epss>\eps_c,
  \end{cases}
\end{equation}
with $n_s=5$ and $\sigma_0=10^{5}$~Pa; $\sigma_e$ is a stress divided by
$\rho_s$, in m$^2$/s$^2$. The physically meaningful quantity is
not $\sigma_0$ but the compressive modulus it produces at the operating point,
$\rho_sc_s^2=\rho_s\sigma_e'(\epss^*)=0.909$~MPa, since only
$\sigma_e'(\epss^*)$ and $\sigma_e''(\epss^*)$ enter the linear design.
The target is a sediment bed of thickness $L\triangleq z_b-z_s^*=20$~cm.

This is not an industrial operating point: the underflow is $27.7$~m/h and
the bed is renewed in $26$~s.

\subsection{Target equilibrium and forwarding design}
\label{ssec:design}

The underflow $v_m^*$ follows from a shooting method: the compression profile is
integrated from \eqref{eq:steadyODE} with the interface trace given by
\eqref{eq:steadyRH}, and $v_m^*$ is adjusted so that $\epss^*(z_b)=f_s^*/v_m^*$.
The result is a uniform feed fraction above the bed, a jump
$\jump{\epss^*}>0$ at $z_s^*$ and a stratified compression profile below
(\Cref{fig:min}). The design then evaluates $\Go$, solves \eqref{eq:cstar}
by bisection and builds the kernel of \Cref{lem:kernel};
\Cref{tab:design} lists the outcome, and \Cref{ass:fwd} holds.

\begin{table}[t]
\centering\small
\begin{tabular}{@{}llrl@{}}
\toprule
Quantity & symbol & value & unit\\
\midrule
Underflow velocity (shooting) & $v_m^*$ & $7.6915\times10^{-3}$ & m/s\\
 Free-settling speed & $\lambda=v_m^*+K/\gamma$ & $3.8462\times10^{-2}$ & m/s\\ 
Interface jump & $\jump{\epss^*}$ & $0.44+1.5\times10^{-7}$ & ---\\
 Compression eigenvalues at $z_s^*$ & $(\lambda_3,\lambda_4)$ & $(29.432,-29.417)$ & m/s\\ 
 Compression eigenvalues at $z_b$ & $(\lambda_3,\lambda_4)$ & $(29.439,-29.423)$ & m/s\\ 
Interface reflection coefficient & $-k^2=-\lambda_4^2/\lambda_3^2$ & $-0.998955$ & ---\\
 Bound on $\mu_2$, \eqref{eq:mu2bar} & $\bar\mu_2$ & $1.3069\times10^{-3}$ & m$^{-1}$\\ 
 Free-settling transit time & $T_1=z_s^*/\lambda$ & $20.800$ & s\\ 
Interface pole & $c^*$ & $-5.1849043728\times10^{-6}$ & s$^{-1}$\\
Non-resonance function & $\Delta_z(c^*)$ & $1.99960$ & ---\\
 Velocity factor & $\gamma_v(c^*)$ & $0.99993$ & ---\\ 
Input gain & $G$ & $1.00007$ & ---\\
\bottomrule
\end{tabular}
\caption{Design quantities at the equilibrium of \Cref{ssec:setup}. The gain
$G$ has no unit because $u$ and $\dot w$ are both velocities.  The
pole $c^*$ is computed by bisection on \eqref{eq:cstar} with tolerance
$10^{-15}$, the kernel being integrated on a grid of $1200$ points of
$\Om_2^*$; the eleven digits given do not change when this grid is
refined.}
\label{tab:design}
\end{table}

The pole $c^*$ is negative: with a constant underflow, the interface
mode is stable, but its time constant $|c^*|^{-1}=1.93\times10^{5}$~s, about
$54$ hours, is far too slow, so the feedback is needed to set the rate. A
quasi-static mass balance explains the value: displacing the
interface by $\eta$ at fixed $v_m^*$, with the trace $\epss^*(z_s^{*+})$ fixed
by \eqref{eq:steadyRH}, lowers the bottom fraction by $\partial_x\epss^*\eta$,
so the outflow falls below the feed flux by $v_m^*\partial_x\epss^*\eta$, which
the bed gains at the rate $-\jump{\epss^*}\dot\eta$: hence
$\dot\eta\approx-v_m^*\partial_x\epss^*(z_s^{*+})\eta/\jump{\epss^*}
=-5.187\times10^{-6}\,\eta$~s$^{-1}$, within a relative $5\times10^{-4}$ of $c^*$. The
bed is indeed quasi-steady on the scale of the interface: with
$\mathrm{Sed}=2.6\times10^{-4}$ the compression waves cross it in $7$~ms.
The compression margin of the
equilibrium, $\epss^*(z_s^{*+})-\eps_c=1.5\times10^{-7}$ (\appsteady{}), is
small, for a structural reason: to first order, \eqref{eq:steadyRH} gives
$\epss^*(z_s^{*+})-\eps_c\approx\theta(1-\theta)\eps_c\lambda^2/L^+$ with
$\theta=\epsin/\eps_c$, so a stiff bed (large $L^+$), which
\Cref{ssec:feasibility} needs, has a thin margin. A perturbation that lowers the
compression-side trace by more than $1.5\times10^{-7}$ in
volume fraction leaves the admissible class. This does not affect the linear
results of \Cref{thm:fwd_GES}, but it limits what they imply for
\eqref{eq:model} (\Cref{sec:conclusion}).

We check $c^*$ with a computation that does not use the kernel. Replacing
$\partial_t$ by $s$ in \eqref{eq:plant}, \eqref{eq:bc} and eliminating the
distributed variables, in closed form on $\Om_1^*$ and by a Runge--Kutta
integration on $\Om_2^*$, gives a homogeneous linear system whose unknowns
are $Y_3(z_s^*)$, $Y_4(z_s^*)$, $\eta$, $\Go\Yo$ and $u$, and whose
determinant $D(s)$ is
the characteristic function of the open-loop generator. With $u=0$, the inflow
on $\Om_1^*$ is homogeneous, so $Y^{(1)}\equiv0$ and the system reduces to a
$3\times3$ system in $(Y_4(z_s^*),\eta,\Go\Yo)$. The zero of $D$ nearest the
origin is $-5.1849043728\times10^{-6}$, equal to $c^*$ in all eleven digits given, within the bisection tolerance
of \Cref{tab:design}. This computation uses the same equilibrium and
linearized coefficients as the design, but neither \eqref{eq:sylvester} nor
\eqref{eq:matching}.

\begin{figure}[t]
\centering
\includegraphics[width=0.55\textwidth]{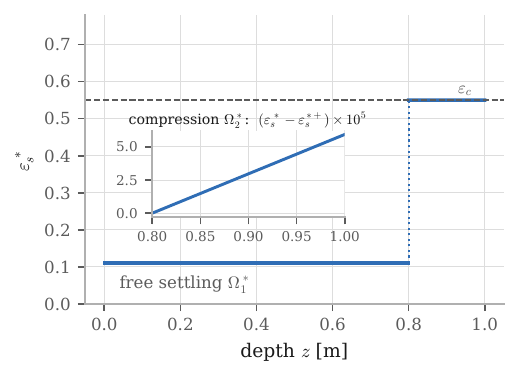}
\caption{Stationary admissible profile $\epss^*$. Inset: deviation
$\epss^*-\epss^*(z_s^{*+})$ on the compression zone $\Om_2^*$, multiplied by
$10^5$.}
\label{fig:min}
\end{figure}

\subsection{The small-gain condition on the example}
\label{ssec:feasibility}

To check \Cref{cond:sg}, we fix $p_3=1$ and take the other parameters of
\eqref{eq:V} and of \Cref{prop:ISSw} from \Cref{tab:weights}. On $\Om_1^*$,
$2\mu_1\lambda-\nu_1=\bar\alpha$ and
$2(\mu_1\lambda+\gamma)p_{2x}-\nu_1^{-1}p_1=\bar\alpha\,p_{2x}$ (the conditions of
\appiss{} with equality). The weights $p_1$, $p_2$ are large so that $J_\Lk$
reduces to its $\Om_2^*$ part and $G_{o,2}^2/d_2=1.3\times10^{-12}$ is
negligible in $\Sigma_b$; this increases the constant $C$ below, but does not affect the rate.
The pair $(\mu_2,r)$, $r=p_4/p_3$, is found numerically: the constants of
\Cref{cond:sg} are evaluated on a grid of pairs that satisfy
\eqref{eq:window}, which is narrow ($r\in(0.9989443,1.0000004)$ for
$\nu_i=10^{-8}$, $\nu_b=10^{-5}$, and $d_4=29.401428-29.370713$), and a pair
for which \eqref{eq:smallgain} holds is retained.

\begin{table}[t]
\centering\small
\begin{tabular}{@{}lll@{}}
\toprule
Parameter & value & main effect\\
\midrule
$p_3$ & $1$ & normalization\\
$r=p_4/p_3$ & $0.999989$ & $\bar\alpha$, $d_4$\\
$\mu_2$ & $0.980\,\bar\mu_2$ & $\bar\alpha$, $d_4$\\
$\nu_1$ & $0.03$~s$^{-1}$ & decay on $\Om_1^*$\\
$\mu_1$ & $(\bar\alpha+\nu_1)/(2\lambda)=1.37$~m$^{-1}$ & decay on $\Om_1^*$\\
$p_1e^{-2\mu_1z_s^*}=p_2e^{-2\mu_1z_s^*}$ & $10^{14}$ & $J_\Lk$, $d_1$, $d_2$, $\sigma_u$\\
$p_{2x}$ & $p_1/\bigl(\nu_1(2\mu_1\lambda+2\gamma-\bar\alpha)\bigr)=9.8\times10^{14}$ & $\sigma_{\dot u}$\\
$\nu_i$ & $10^{-8}$ & $d_1$, $d_2$, $d_4$\\
$\nu_b$ & $10^{-5}$ & $\sigma_u$\\
$\nu_2$ & $\bar\alpha/10=7.53\times10^{-3}$ & $\alpha_w$, $\sigma_u$, $\sigma_{{\rm vol},u}$\\
$\vartheta$ & $\alpha_{sg}/4=1.34\times10^{-2}$ & $\alpha_w$, $\sigma_w$\\
$\delta$ & $1/2$ & $\alpha_w$\\
\bottomrule
\end{tabular}
\caption{Parameters of the Lyapunov functional \eqref{eq:V} and of
\Cref{prop:ISSw} for the illustrative example. The rates $\bar\alpha$, $\alpha_{sg}$,
$\alpha_w$, $\nu_1$, $\nu_2$ and $\vartheta$ are in s$^{-1}$, $\mu_1$ and $\mu_2$
in m$^{-1}$; the weights and the Young parameters $\nu_i$, $\nu_b$, $\delta$
are dimensionless. All quantities are computed in double precision on a grid
of $1200$ points of $\Om_2^*$; interval arithmetic is not used.}
\label{tab:weights}
\end{table}

\noindent With these values,
\begin{equation}\label{eq:numsg}
  \bar\alpha=7.530\times10^{-2},\qquad
  a_\eta=-c^*+\sqrt{J_\Lk I_\eta}=1.9847\times10^{-2},\qquad
  a_o=\Sigma_bI_o=1.7069\times10^{-3},
\end{equation}
so that $a_\eta+a_o=2.1554\times10^{-2}$, $(a_\eta+a_o)/\bar\alpha=0.286$,
and \eqref{eq:smallgain} holds. The other constants are
$-c^*=5.185\times10^{-6}$, $J_\Lk=2.066$, $I_\eta=1.905\times10^{-4}$,
$I_o=2.934\times10^{-9}$, $d_1=d_2=3.846\times10^{12}$,
$d_4=3.072\times10^{-2}$, $\Sigma_b=5.818\times10^{5}$ and, for the terms in
$u$, $\sigma_u=1.1\times10^{17}$ (the bottom term of \eqref{eq:sigu}
contributes $1.0\times10^{3}$ of it), $\sigma_{\dot u}=3.1\times10^{14}$ and
$b_w=9.7\times10^{15}$ at $\kappa=0.3$~s$^{-1}$.

All three items of \Cref{thm:fwd_GES} apply. The proof of item~(iii) gives
the rate $\beta=\min(\alpha_w,2\kappa-b_w/\rho)$, with
$\alpha_w=3.11\times10^{-2}$~s$^{-1}$ for the values of \Cref{tab:weights}.
For $\rho\ge b_w/(2\kappa-\alpha_w)=1.7\times10^{16}$, this gives
$\beta=\alpha_w=3.11\times10^{-2}$~s$^{-1}$. Since
$\alpha_w<\bar\alpha-a_\eta-a_o$ for every choice of $\nu_2$, $\vartheta$
and $\delta$, and $\beta<2\kappa$, every choice of these parameters gives
\begin{equation}\label{eq:betacert}
  \beta<\min\bigl(\bar\alpha-a_\eta-a_o,\,2\kappa\bigr)
  =5.374\times10^{-2}~\text{s}^{-1},
\end{equation}
less than a fifth of the rate $\kappa$ assigned to $w$. The actual decay
rate of the closed loop lies between $\beta$ and $\kappa$, since $w$ decays
exactly at the rate $\kappa$; the characteristic determinant of
\Cref{ssec:design} with $u=\mathfrak{K}w$ would locate it.

The ratio $(a_\eta+a_o)/\bar\alpha$ depends mainly on the stiffness and on
the thickness of the bed. At the operating point $f_s^*\approx\epss^*v_m^*$
in the bed, so
$\Bb^{(2)}\approx-\tfrac{\gamma}{2}\bigl[\begin{smallmatrix}1&-1\\-1&1
\end{smallmatrix}\bigr]$ damps only the mode $Y_3-Y_4$, and $\bar\alpha$
comes from the transport term:
$\bar\alpha\approx2\mu_2|\lambda_4|\approx2v_m^*/L=0.077$~s$^{-1}$, against
the computed $0.075$~s$^{-1}$. This is twice the renewal rate $v_m^*/L$ of the bed; at fixed $v_m^*$ it
depends neither on $c_s$ nor on $\gamma$.
\begin{itemize}[leftmargin=1.5em,itemsep=1pt]
\item \emph{Stiffness.} $a_\eta$ decreases like $c_s^{-1}$ and $a_o$ like
$c_s^{-2}$, while $\bar\alpha$ does not depend on $c_s$. The ratio is $0.027$
at $\rho_sc_s^2=90.9$~MPa, $0.086$ at $9.09$~MPa, $0.286$ here
($0.909$~MPa) and $0.558$ at $0.273$~MPa; with weights of the form of
\Cref{tab:weights}, the condition fails below about $0.10$~MPa.
\item \emph{Thickness.} $\bar\alpha\approx2v_m^*/L$ decreases like $1/L$,
$a_o$ grows like $L$ and $a_\eta$ does not change. The ratio is $0.067$ at
$L=5$~cm, $0.137$ at $10$~cm, $0.286$ here ($20$~cm), $0.447$ at $30$~cm and
$0.802$ at $50$~cm; with the same weights, the condition fails above about
$60$~cm.
\end{itemize}

The constant $C$ of \Cref{thm:fwd_GES}(iii) is large. By the proof,
$C=C_1C_2\ge2\rho J_\Lk\,c_2/c_1$. The weights span nearly fifteen orders of
magnitude, so $c_2/c_1=9.8\times10^{14}$ in \eqref{eq:normequiv}, and
$\rho\ge1.7\times10^{16}$, so $C\ge6.9\times10^{31}$ and \eqref{eq:GES} gives
no information before $t=\ln C/\beta\approx2.4\times10^{3}$~s. Since
$\nu_2<\bar\alpha/3$, changing $\nu_2$ reduces $\sigma_u$, hence $\rho$, by a
factor $3.3$ at most, and $C$ stays above $2\times10^{31}$. The bound
\eqref{eq:betacert} is a statement about the rate only.

\subsection{Simulation of the closed loop}
\label{ssec:nonlinear}

\Cref{thm:fwd_GES} concerns the linearized loop. \Cref{fig:nl} shows
simulations of the \emph{nonlinear} system \eqref{eq:model}--\eqref{eq:BCbot}, solved on the
physical domain $(0,z_b)$ by a finite-volume scheme with Rusanov fluxes and a second-order SSP
Runge--Kutta step (CFL $0.4$) on
$800$ cells ($160$ in the bed), the interface being captured as a shock and
located by the crossing of $(\epsin+\eps_c)/2$. At each
step the state is mapped onto the reference configuration and
$u=\mathfrak{K}\bigl(\eta-\Lk[Y]\bigr)$ is applied through $v_m=v_m^*+u$, the
reference ($z_s^*$, $v_m^*$, $\epss^*$, $\Lk$, $\mathfrak{K}$) being the
equilibrium of the current set point and feed fraction.

\begin{figure}[t]
\centering
\includegraphics[width=0.98\textwidth]{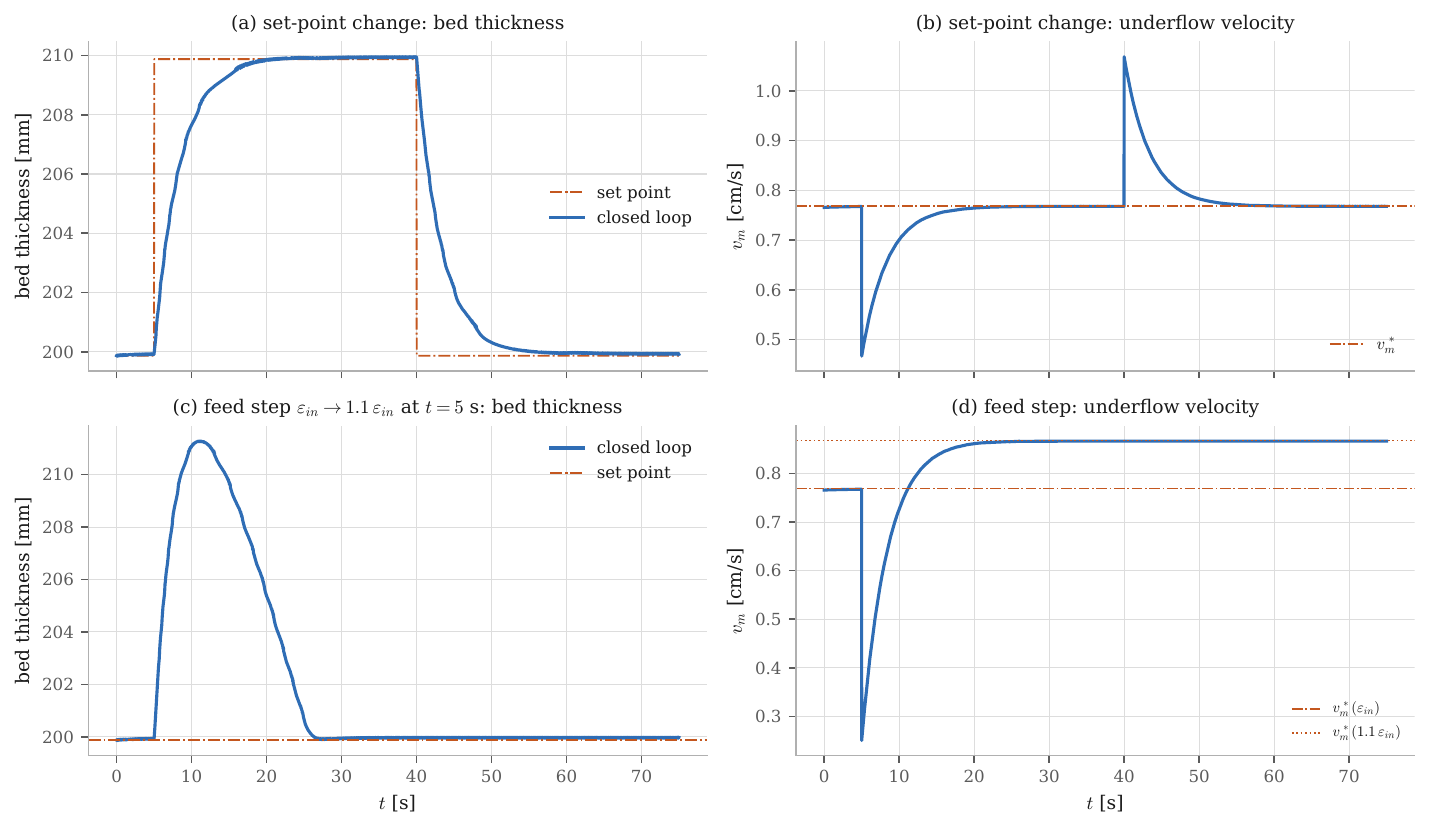}
\caption{Nonlinear closed loop, $\kappa=0.3$~s$^{-1}$. (a)--(b): the set
point of the bed thickness goes from $200$ to $210$~mm at $t=5$~s and back at
$t=40$~s. (c)--(d): the feed fraction goes from $\epsin$ to $1.1\,\epsin$ at
$t=5$~s, and the reference is switched to the equilibrium of the new feed
fraction; the new suspension reaches the bed at $t\approx25.8$~s.
(a), (c): bed thickness; (b), (d): underflow velocity.}
\label{fig:nl}
\end{figure}

In (a)--(b) the set point goes from $200$ to $210$~mm and back. The bed
is within $1$~mm of its set point after $8$~s, as $10\,e^{-\kappa t}$~mm
predicts, and the underflow stays between $0.467$ and $1.07$~cm/s. In
(c)--(d) the feed fraction rises by $10\%$, which \Cref{thm:fwd_GES} does not
cover; the reference is switched to the equilibrium of the new feed
fraction ($v_m^*=0.868$~cm/s). The suspension still in $\Om_1$ is a deficit
for the new reference, so the underflow first drops to $0.25$~cm/s and the
bed thickens to $211.3$~mm; it thins again and is back at its set point when
the new suspension reaches the bed, at $t\approx25.8$~s. The scheme adds a
numerical drift of $16\ \mu$m/s, which leaves a bias of at most $0.1$~mm,
since the loop drives $w$, not $\eta$, to zero: forwarding adds no integral
action on $\eta$. This simulation is not a proof: the scheme does not resolve the phase separation of
\Cref{def:admissible}, whose margin at this equilibrium is $1.5\times10^{-7}$, and the layer just under
the captured front is below $\eps_c$. It suggests that the law
tolerates finite changes of set point and of feed fraction, and the nonlinear terms neglected by the
linearization. The
passage from the linearized statement to \eqref{eq:model} is the main open
question, together with the saturation $v_m\ge0$, which the law does not
enforce.

\section{Conclusion}
\label{sec:conclusion}

Isolating the interface by the Rankine--Hugoniot relations and mapping the
subdomains onto fixed intervals turns the control of a moving shock into a
fixed-boundary problem whose linearization has the form for which forwarding
was designed.

We now state what is certified; this concerns only the linearized
loop. Under the design assumptions alone (\Cref{ass:fwd}), the feedback makes
the variable $w=\eta-\Lk[Y]$ decay exactly at the chosen rate $\kappa$, and
the interface error $\eta$ is bounded by this decaying term plus a term
controlled by the energy of the distributed layer
(\Cref{thm:fwd_GES}(ii)). The interface pole $c^*$ used by the design is
reproduced to the
solver tolerance by a computation that uses none of the kernel equations,
and to a relative $5\times10^{-4}$ by a quasi-static mass balance of the bed. The exponential stability of the whole linearized loop
(\Cref{thm:fwd_GES}(iii)) requires in addition the small-gain condition of
\Cref{cond:sg}, which \Cref{ssec:feasibility} examines: the certificate holds on the illustrative example, but it depends on
the stiffness of the sediment and on the thickness of the bed, and, for
weights of the form used in the example, is lost below a compressive modulus
of about $0.10$~MPa or above a bed of about $60$~cm. It does not apply to
activated sludge, which is several orders of magnitude softer and settles in
deeper units; the example is a stiff laboratory bed because the
sufficient condition can be satisfied for such a bed. With this stiffness, the
equilibrium has a compression margin of $1.5\times10^{-7}$ (\Cref{ssec:design}):
the admissible class is thin exactly where the certificate holds, so the
linear result certifies the rate of a loop whose nonlinear counterpart
leaves the class of \Cref{thm:wp} under perturbations of order $10^{-7}$ of
the compression-side trace. The captured-shock simulation of
\Cref{ssec:nonlinear}, whose layer under the front is outside that class,
nevertheless behaves as the linear theory predicts; whether the kinematic
admissibility of \Cref{def:admissible} is the right nonlinear framework, or
whether a weaker one (a contact layer below the front) is needed, remains an open question.
The conservatism of the certificate, a factor ten between $\beta$ and
$\kappa$ and a transient constant above $10^{31}$, has a clear cause:
the bed is quasi-steady on the scale of the interface (\Cref{ssec:design}),
and a quadratic estimate made at each time accounts for the reflections of the
fast waves, but not for the damping that the layer accumulates along its characteristics.
This work could be extended in three directions: a
less conservative certificate, by a singular-perturbation reduction of the bed,
by frequency-domain techniques \cite{amirat2026insights}
or by a backstepping transformation distributing the damping
\cite{vazquez2026backstepping}; an
observer, since the law needs the profiles and only the interface level is
measured; and a design that takes the saturation $v_m\ge0$ into account.

\bigskip
\noindent\textbf{Code availability.} The code reproducing \Cref{sec:num} is at\\
\href{https://github.com/Vincent69410/sedimentation-forwarding}%
{\texttt{github.com/Vincent69410/sedimentation-forwarding}}.

\medskip
\noindent\textbf{Use of AI assistance.} The authors used Claude (Anthropic)
in the preparation of this manuscript: to carry out the numerical experiments of
\Cref{sec:num} and the cross-checks reported there, to draft and
revise the text, and to perform internal consistency reviews. The authors have
verified the statements, proofs and numerical results reported here and take
full responsibility for the content.

\appendix

\section{Construction of the stationary profile}
\label[appendix]{app:steady}

We construct $(v_m^*,\epss^*)$ from $z_s^*$, as announced in
\Cref{ssec:steady}, in three steps: the interface value, the compression
profile, and the underflow velocity. Throughout, $z_s^*\in(0,z_b)$ and
$\epsin\in(0,\eps_c)$ are given, and $v_m^*>0$ is the unknown, with
$\lambda=v_m^*+K/\gamma$ and $f_s^*=\epsin\lambda$.

\paragraph*{Step 1: the interface value.} Write $\eps^+=\epss^*(z_s^{*+})$. The
stationary jump \eqref{eq:steadyRH} is $\Sigma(\eps^+)=0$ for
\begin{equation}\label{eq:app_Sigma}
  \Sigma(\eps)\triangleq\sigma_e(\eps)
  -(f_s^*)^2\Bigl(\frac{1}{\epsin}-\frac{1}{\eps}\Bigr),
  \qquad \eps\in[\eps_c,1).
\end{equation}
Two properties of $\Sigma$ give the result. First, $\sigma_e(\eps_c)=0$ and
$\epsin<\eps_c$ give
\begin{equation}\label{eq:app_Sigc}
  \Sigma(\eps_c)=-(f_s^*)^2\Bigl(\frac{1}{\epsin}-\frac{1}{\eps_c}\Bigr)<0 .
\end{equation}
Second, with $v_s(\eps)=f_s^*/\eps$ and $c_s(\eps)=\sqrt{\sigma_e'(\eps)}$,
\begin{equation}\label{eq:app_Sigprime}
  \Sigma'(\eps)=\sigma_e'(\eps)-\frac{(f_s^*)^2}{\eps^2}
  =c_s(\eps)^2-v_s(\eps)^2 ,
\end{equation}
so $\Sigma$ increases where the stationary flow is subcritical. Let
$\eps^+$ be the smallest root of $\Sigma$ in $(\eps_c,1)$, which exists as soon
as $\Sigma$ takes a positive value there; by \eqref{eq:app_Sigc},
$\Sigma'(\eps^+)\ge0$, and
\begin{equation}\label{eq:app_transversal}
  \Sigma'(\eps^+)>0
  \iff v_s(\eps^+)<c_s(\eps^+)
  \iff \text{\eqref{eq:subcritical} holds at }z=z_s^{*},
\end{equation}
so the subcriticality required of an admissible profile is the transversality
of that root. A transversal root is simple, so $\eps^+$ is the only root of
$\Sigma$ in a neighbourhood of it.

\paragraph*{Step 2: the compression profile.} On $\Om_2^*$ the flux $f_s^*$ is
constant, so $\partial_\eps F(\eps,f_s^*)=\Sigma'(\eps)$ and
\eqref{eq:steadyODE} is the scalar ODE
\begin{equation}\label{eq:app_odeprofile}
  \Sigma'(\epss^*)\,\frac{d\epss^*}{dz}=\Gamma(\epss^*),
  \qquad
  \Gamma(\eps)\triangleq K\eps-\mathcal{R}(\eps)\bigl(f_s^*-\eps v_m^*\bigr),
\end{equation}
with initial value $\epss^*(z_s^{*+})=\eps^+$. On the set where
$\eps\in(\eps_c,1)$ and $\Sigma'(\eps)>0$, the right-hand side
$\Gamma/\Sigma'$ is $C^1$, since $\sigma_e\in C^2$ and $\mathcal{R}\in C^1$
there; \eqref{eq:app_odeprofile} therefore has a unique maximal solution, and
that solution extends to all of $[z_s^*,z_b]$ provided it stays in a
compact subset of $\{\eps\in(\eps_c,1):\Sigma'(\eps)>0\}$, that is, in
$(\eps_c,1)$ and subcritical. The second condition is
\eqref{eq:subcritical} on $[z_s^*,z_b]$, which is \Cref{def:admissible}(ii)
with $\dot z_s=0$.

\paragraph*{Step 3: the underflow velocity.} Steps 1 and 2 depend on $v_m^*$,
through $\lambda$, $f_s^*$ and $\Gamma$. The bottom condition
\eqref{eq:BCbot}, at stationarity, is $f_s^*=\epss^*(z_b)v_m^*$, that is
\begin{equation}\label{eq:app_shoot}
  R(v_m^*)\triangleq\epss^*(z_b;v_m^*)
  -\epsin\Bigl(1+\frac{K}{\gamma v_m^*}\Bigr)=0 .
\end{equation}
By continuous dependence of \eqref{eq:app_odeprofile} on its parameters and
simplicity of the root $\eps^+$, the map $R$ is continuous on the open set of
$v_m^*$ for which Steps~1 and~2 succeed, and a zero of $R$ at which
\eqref{eq:subcritical} holds on $[z_s^*,z_b]$ is a stationary admissible
profile with interface at $z_s^*$; conversely, every such profile whose
interface trace is the smallest root of $\Sigma$ in $(\eps_c,1)$ is a zero of
$R$, and for \eqref{eq:closures} this root is the only one, since $\Sigma'$
is increasing. Numerically, $v_m^*$ is
computed by bisection on $R$; each evaluation of $R$ solves
\eqref{eq:app_Sigma} by bisection and \eqref{eq:app_odeprofile} by an
adaptive Runge--Kutta scheme.

At the operating point of \Cref{sec:num}, $v_m^*=7.6915\times10^{-3}$~m/s,
$\eps^+=0.55000015$ and
\begin{equation*}
  \Sigma'(\eps^+)=c_s^2-v_s^2=8.6580\times10^{2}-5.9172\times10^{-5}
  =8.6580\times10^{2}\ \mathrm{m^2/s^2},\qquad
  \frac{v_s}{c_s}=2.61\times10^{-4},
\end{equation*}
so the profile is strongly subcritical. At the interface,
$d\epss^*/dz=2.97\times10^{-4}$~m$^{-1}$; over $L=20$~cm, this gives a
relative variation of $1.1\times10^{-4}$ over the bed (inset of \Cref{fig:min}).

\section{Proof of the dissipation inequality}
\label{app:iss}

On $\Om_1^*$, since $u$ does not depend on $x$, \eqref{eq:zone1} gives
\begin{equation}\label{eq:app_zone1}
\begin{gathered}
  \partial_tY_1+\lambda\partial_xY_1=-\partial_xY_2,\qquad
  \partial_tY_2+\lambda\partial_xY_2=-\gamma Y_2+\gamma\epsin u,\\
  \partial_t(\partial_xY_2)+\lambda\partial_x(\partial_xY_2)=-\gamma\,\partial_xY_2 .
\end{gathered}
\end{equation}
Differentiating \eqref{eq:V} and integrating by parts in $x$ gives
$\dot\Vc=\mathcal{I}^{(1)}+\mathcal{I}^{(2)}+\mathcal{B}_0+\mathcal{B}_{z_s^*}
+\mathcal{B}_{z_b}$, where $\mathcal{I}^{(i)}$ are the volume terms on
$\Om_i^*$ and $\mathcal{B}_0$, $\mathcal{B}_{z_s^*}$, $\mathcal{B}_{z_b}$ the
boundary terms at $x=0$, $z_s^*$, $z_b$. We bound them in turn.

\paragraph*{Volume, free-settling zone.} By \eqref{eq:app_zone1},
\begin{multline}\label{eq:app_Ivol1}
  \mathcal{I}^{(1)}=\int_0^{z_s^*}e^{-2\mu_1x}\Bigl[
  -2\mu_1\lambda\bigl(p_1Y_1^2+p_2Y_2^2+p_{2x}(\partial_xY_2)^2\bigr)
  -2p_1Y_1\partial_xY_2\\
  +2p_2\gamma\epsin Y_2u
  -2\gamma\bigl(p_2Y_2^2+p_{2x}(\partial_xY_2)^2\bigr)\Bigr]\dd x .
\end{multline}
Young's inequality, with a parameter $\nu_1>0$ on the Jordan term and with
$\nu_2$ on the source term, gives
\begin{equation*}
  -2p_1Y_1\partial_xY_2\le\nu_1p_1Y_1^2+\nu_1^{-1}p_1(\partial_xY_2)^2,
  \qquad
  2p_2\gamma\epsin Y_2u\le\nu_2p_2Y_2^2+\nu_2^{-1}p_2\gamma^2\epsin^2u^2,
\end{equation*}
hence
\begin{multline}\label{eq:app_I1}
  \mathcal{I}^{(1)}\le-\int_0^{z_s^*}e^{-2\mu_1x}\Bigl[
  p_1(2\mu_1\lambda-\nu_1)Y_1^2+p_2(2\mu_1\lambda+2\gamma-\nu_2)Y_2^2\\
  +\bigl(2(\mu_1\lambda+\gamma)p_{2x}-\nu_1^{-1}p_1\bigr)(\partial_xY_2)^2
  \Bigr]\dd x
  +\nu_2^{-1}\frac{1-e^{-2\mu_1z_s^*}}{2\mu_1}p_2\gamma^2\epsin^2u^2 .
\end{multline}

\paragraph*{Volume, compression zone.} By \eqref{eq:zone2},
\eqref{eq:closedcoupling} and \eqref{eq:Q},
\begin{equation}\label{eq:app_Ivol2}
  \mathcal{I}^{(2)}
  =-\int_{z_s^*}^{z_b}Y^{(2)\top}\Qb\,Y^{(2)}\dd x
  +2\int_{z_s^*}^{z_b}Y^{(2)\top}\Pb\bigl(\Cb^{(2)}_\eta\eta
  +\Cb^{(2)}_{:,2}\Go\Yo+\Db^{(2)}u\bigr)\dd x .
\end{equation}
Young's inequality with parameter $\nu_2$ on each of the three terms of the
second integral gives, for instance,
$2Y^{(2)\top}\Pb\,\Cb^{(2)}_\eta\eta\le\nu_2Y^{(2)\top}\Pb\,Y^{(2)}
+\nu_2^{-1}\bigl(\Cb^{(2)\top}_\eta\Pb\,\Cb^{(2)}_\eta\bigr)\eta^2$. With
\eqref{eq:alphabar}, \eqref{eq:sigu} and \eqref{eq:Ieo},
\begin{equation}\label{eq:app_I2}
  \mathcal{I}^{(2)}\le-(\bar\alpha-3\nu_2)\int_{z_s^*}^{z_b}Y^{(2)\top}\Pb\,
  Y^{(2)}\dd x+\sigma_\eta\eta^2+\sigma_o(\Go\Yo)^2+\sigma_{{\rm vol},u}u^2 .
\end{equation}

\paragraph*{Top boundary.} By \eqref{eq:bc}, $Y_1(t,0)=0$ and
$Y_2(t,0)=\epsin u(t)$, and the second equation of \eqref{eq:app_zone1} at
$x=0$ gives $\epsin\dot u+\lambda\partial_xY_2(t,0)=0$. Hence
\begin{equation}\label{eq:app_B0}
  \mathcal{B}_0=\lambda\bigl[p_1Y_1^2+p_2Y_2^2+p_{2x}(\partial_xY_2)^2
  \bigr](0)
  =\lambda p_2\epsin^2u^2+\frac{p_{2x}\epsin^2}{\lambda}\dot u^2 .
\end{equation}

\paragraph*{Bottom boundary.} The boundary terms of \Cref{ssec:lyap} at
$z_b$, with \eqref{eq:youngb}, give
\begin{equation}\label{eq:app_Bzb}
  \mathcal{B}_{z_b}\le\bigl[-p_3e^{-2\mu_2(z_b-z_s^*)}\lambda_3(z_b)
  +(1+\nu_b)p_4|\lambda_4(z_b)|\bigr]Y_3(z_b)^2
  +(1+\nu_b^{-1})p_4|\lambda_4(z_b)|\bigl(\Db^{(bot)}\bigr)^2u^2,
\end{equation}
and the bracket is negative by the right inequality of \eqref{eq:window}.

\paragraph*{Interface.} With all traces at $z_s^*$, the boundary terms are
$-\lambda e^{-2\mu_1z_s^*}[p_1Y_1^2+p_2Y_2^2+p_{2x}(\partial_xY_2)^2]
+p_3\lambda_3Y_3^2-p_4e^{-2\mu_2(z_b-z_s^*)}|\lambda_4|Y_4^2$.
We drop the term in $(\partial_xY_2)^2$ and bound $Y_3^2$ by \eqref{eq:youngi}.
By \eqref{eq:Dpar}--\eqref{eq:d14}, this gives
\begin{equation}\label{eq:app_Bzs}
  \mathcal{B}_{z_s^*}\le-\Dpar(Y).
\end{equation}

\paragraph*{Choice of the weights.} Fix $\nu_1>0$. Choose $\mu_1$ such that
$2\mu_1\lambda-\nu_1\ge\bar\alpha$ and $2\mu_1\lambda+2\gamma\ge\frac43\bar\alpha$,
so that $2\mu_1\lambda+2\gamma-\nu_2\ge\bar\alpha$ for every
$\nu_2\in(0,\bar\alpha/3)$;
then $p_1,p_2$ such that $d_1>0$ and $d_2>0$; then $p_{2x}$ such that
$2(\mu_1\lambda+\gamma)p_{2x}-\nu_1^{-1}p_1\ge\bar\alpha\,p_{2x}$, which is
possible since $2(\mu_1\lambda+\gamma)>\bar\alpha$. Then \eqref{eq:app_I1}
gives
\begin{equation*}
  \mathcal{I}^{(1)}\le-\bar\alpha\int_0^{z_s^*}e^{-2\mu_1x}
  \bigl[p_1Y_1^2+p_2Y_2^2+p_{2x}(\partial_xY_2)^2\bigr]\dd x
  +\nu_2^{-1}\frac{1-e^{-2\mu_1z_s^*}}{2\mu_1}p_2\gamma^2\epsin^2u^2 .
\end{equation*}
Adding this bound, \eqref{eq:app_I2}, \eqref{eq:app_B0}, \eqref{eq:app_Bzb}
and \eqref{eq:app_Bzs}, and using \eqref{eq:sigu}, gives \eqref{eq:ISS}.
Finally, $d_4>0$ is the left inequality of \eqref{eq:window}. \qed

\section{Proof of the well-posedness theorem}
\label[appendix]{app:wp}

Throughout this appendix, $\|\cdot\|$ is the supremum norm on the set
indicated, $\|\zeta\|_t\triangleq\|\zeta\|_{C^0([0,t])}$ for a function of
time, $\jump{g}=g^+-g^-$ is the jump across the interface, and
$c_s\triangleq\sqrt{\sigma_e'(\epss)}$ is computed with the current state
(in \Cref{sec:fixed}, it was computed with the stationary profile).

The difficulty is that the interface $z_s$ is an unknown. Its velocity
$\dot z_s=\jump{f_s}/\jump{\epss}$ depends on the traces of the solution at
the interface, and these traces depend on $z_s$, which delimits the two zones
$\Om_i(t)$. The proof separates the two dependences: we first solve the
problem for a prescribed interface velocity $\zeta$, and then look for the
velocity $\zeta$ for which the Rankine--Hugoniot condition holds. It is
organized in six steps, one per subsection.
\begin{enumerate}[label=\arabic*.,leftmargin=2em,itemsep=1pt]
\item In \Cref{ssec:app_fixed}, each zone $\Om_i(t)$ is mapped affinely onto
its initial position $\Om_{i,0}$. The interface becomes the fixed point
$x=z_s(0)$, and $\dot z_s$ only appears in the coefficients
(\Cref{lem:app_fixed}). The margin function $\widehat{\Nc}$ computed in these
variables blows up if and only if $\Nc$ does (\Cref{lem:Nequiv}), so the
theorem can be proved with $\widehat{\Nc}$.
\item In \Cref{ssec:app_bc}, we check that the boundary value problem on
each zone is well posed: at each boundary, the number of conditions equals the
number of incoming characteristics. In the Riemann invariants $R_1,R_2$, the
compression zone becomes two transport equations, and the Hugoniot
condition $H=0$ can be solved explicitly for the incoming invariant,
$R_2^+=h(R_1^+,y^-)$, with derivatives bounded in terms of $M$
(\Cref{lem:app_h,lem:app_bc}).
\item In \Cref{ssec:app_frozen}, the interface velocity is replaced by a given
function $\zeta$, which gives the frozen problem. The free-settling zone
receives no data from the interface and is solved first, along its
characteristics: the velocity equation is closed, its first $x$-derivative
satisfies a Riccati equation and its second a linear one, and $\epss$ and
$\partial_x\epss$ follow (\Cref{lem:app_char}). The compression zone is then
solved by \cite[Ch.~2]{li1985boundary}, with $y^-$ as interface data.
\item In \Cref{ssec:app_bounds}, we show that, if $\widehat{\Nc}(0)\le M$,
the frozen solution exists on an interval $[0,\tau_0(M)]$ on which its
margins stay above $\frac1{2M}$ (\Cref{lem:app_frozen}), and that its traces
at the interface depend on $\zeta$ in a Lipschitz way, with a constant
proportional to $t$ (\Cref{lem:app_lip}). All the constants depend only on
$M$ and on the data.
\item In \Cref{ssec:app_local}, the map $\zeta\mapsto\mathcal{T}(\zeta)$,
the Rankine--Hugoniot quotient of the traces of the frozen solution, is shown
to be a contraction for $\tau\le\tau_*(M)$. Its fixed point gives the local
solution, which is unique (\Cref{prop:app_local}).
\item In \Cref{ssec:app_cont}, we use that $\tau_*$ depends only on $M$: the
solution can be restarted from any time $t_0$, on an interval of length
$\tau_*(M)$ where $M$ bounds the margin function at $t_0$, so, if
$\widehat{\Nc}$ stayed bounded, the solution would extend beyond
$T_{\max}$ (\Cref{prop:app_cont}). Finally, the state stays bounded while the
structural margins do, and $\partial_x^2v_s$ and $\partial_x\epss$ stay
bounded while $\partial_xv_s$ does (\Cref{lem:app_ctrl}); the unbounded term
of $\widehat{\Nc}$ is therefore one of those listed in (i)--(iv).
\end{enumerate}

\subsection{Fixed-domain formulation}\label{ssec:app_fixed}

The interface moves, so the two zones change with time. In
\Cref{sssec:app_mapped}, we map each zone onto its position at $t=0$ and
write the problem and the speeds in these variables. Since \Cref{thm:wp} is
stated with the margin function of the physical variables, we compare it, in
\Cref{sssec:app_margin}, with the margin function of the mapped variables, and
show that it is enough to prove the theorem with the latter.

\subsubsection{Mapped equations}\label{sssec:app_mapped}

Let $\Om_{1,0}=(0,z_s(0))$, $\Om_{2,0}=(z_s(0),z_b)$ and, for $z_s\in C^1$,
\begin{equation}\label{eq:app_rho}
  \rho_1(t)\triangleq\frac{z_s(t)}{z_s(0)},\qquad
  \rho_2(t)\triangleq\frac{z_b-z_s(t)}{z_b-z_s(0)},\qquad
  \Phi(t,z)\triangleq\begin{cases}
    z/\rho_1(t), & z\in\Om_1(t),\\
    z_b-(z_b-z)/\rho_2(t), & z\in\Om_2(t).
  \end{cases}
\end{equation}
For each $t$, $\Phi(t,\cdot)$ maps $\Om_i(t)$ onto $\Om_{i,0}$, and $\Phi$
is $C^1$ in $t$. Let $\hat y^{(i)}(t,x)\triangleq y(t,z)$ for
$x=\Phi(t,z)\in\Om_{i,0}$, and $v_s^{(i)}\triangleq f_s^{(i)}/\epss^{(i)}$.

\begin{lemma}[Fixed-domain formulation]\label{lem:app_fixed}
Let $z_s\in C^1([0,T);(0,z_b))$. Then $(y,z_s)$ is an admissible interface
solution on $[0,T)$ if and only if
$\hat y^{(i)}\in C^1([0,T)\times\overline{\Om_{i,0}})$,
$\partial_x^2v_s^{(1)}\in C^0$, the inequalities of \Cref{def:admissible}
hold, and
\begin{equation}\label{eq:app_FBVP}
\left\{\begin{aligned}
 &\partial_t\hat y^{(i)}+\hat A^{(i)}\partial_x\hat y^{(i)}=\hat S^{(i)},
   \quad \hat A^{(i)}\triangleq\rho_i^{-1}A(\hat y^{(i)})-g_i\mathbf{I},
   \quad \hat S^{(i)}\triangleq S_c^{(i)}(\hat y^{(i)},v_m),
   \quad x\in\Om_{i,0},\\
 &\epss^{(1)}(t,0)=\epsin,\qquad
   f_s^{(1)}(t,0)=\epsin\Bigl(v_m+\tfrac{K}{\gamma}\Bigr),\qquad
  f_s^{(2)}(t,z_b)=\epss^{(2)}(t,z_b)\,v_m,\\
 &H=0\ \text{ at }x=z_s(0),\qquad
  \dot z_s=\mathfrak{s}\triangleq\jump{f_s}/\jump{\epss}\ \text{ at }x=z_s(0),
\end{aligned}\right.
\end{equation}
with $g_1\triangleq x\,\dot z_s/z_s$ and
$g_2\triangleq(z_b-x)\,\dot z_s/(z_b-z_s)$, and with
$H=\jump{f_s}^2-\jump{\epss}\jump{F}$ as in \Cref{def:compat}. In the rest of
this appendix, $H$ and $\mathfrak{s}$ are seen as functions of the two traces
$(y^+,y^-)$ at the interface.
\end{lemma}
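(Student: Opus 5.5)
The plan is to prove the equivalence by a direct chain-rule computation in each zone, followed by a translation of the boundary, jump and regularity conditions; no estimate is needed. The tool is the change of variables $x=\Phi(t,z)$ of \eqref{eq:app_rho}. For fixed $t$ it is an affine bijection of $\Om_i(t)$ onto $\Om_{i,0}$ with constant slope $\rho_i(t)^{-1}$. It is $C^1$ jointly in $(t,z)$ because $z_s\in C^1$ and $z_s\in(0,z_b)$, so that $\rho_i>0$.

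\emph{Step 1: the PDE.} For the forward direction, I differentiate $\hat y^{(i)}(t,\Phi(t,z))=y(t,z)$. This gives $\partial_zy=\rho_i^{-1}\partial_x\hat y^{(i)}$ and $\partial_ty=\partial_t\hat y^{(i)}+\partial_t\Phi\,\partial_x\hat y^{(i)}$. On zone 1 we have $\partial_t\Phi=-z\dot\rho_1/\rho_1^2=-x\dot z_s/z_s=-g_1$. On zone 2 we have $\partial_t\Phi=-(z_b-z)\dot\rho_2/\rho_2^2=(z_b-x)\dot z_s/(z_b-z_s)$ up to sign. I will check the sign convention so that it produces the term $-g_2\mathbf{I}$ in $\hat A^{(2)}$, consistently with \eqref{eq:Ac}. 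Substituting into the quasilinear form $\partial_ty+A(y)\partial_zy=S$, which holds classically on each $\Om_i(t)$ because $y$ is $C^1$ there and the flux is $C^1$ away from $\eps_c$ by \Cref{def:admissible}(i), gives the first line of \eqref{eq:app_FBVP}. The source is unchanged since it contains no derivative.

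\emph{Step 2: regularity and boundary conditions.} $C^1$ regularity up to the boundary transfers both ways, because $\Phi$ and its inverse are $C^1$ on the closures. Since $\Phi(t,\cdot)$ is affine with $\partial_z\Phi=\rho_1^{-1}$, we get $\partial_z^2v_s=\rho_1^{-2}\partial_x^2v_s^{(1)}$, so continuity of $\partial_x^2v_s^{(1)}$ is equivalent to continuity of $\partial_z^2v_s$. The points $z=0$ and $z=z_b$ are fixed by $\Phi$, so \eqref{eq:BCtop}--\eqref{eq:BCbot} become the second line verbatim. The one-sided traces at $z_s(t)^{\pm}$ are the traces at $x=z_s(0)^{\pm}$. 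The inequalities of \Cref{def:admissible} are pointwise statements on the state, and they are therefore invariant.

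\emph{Step 3: weak solution versus jump conditions.} This is the main point. A piecewise $C^1$ function that is a classical solution on each side of a $C^1$ curve is a weak solution of \eqref{eq:model} if and only if the Rankine--Hugoniot relations \eqref{eq:RH} hold along the curve. One direction is \Cref{lem:RH}. For the converse, I will test against $\varphi\in C_c^\infty((0,T)\times\Om)$, integrate by parts on each side and use the divergence theorem with normal proportional to $(-\dot z_s,1)$. The remaining interface integral is $\int(\jump{U}\dot z_s-\jump{\mathrm{flux}})\varphi$, and it vanishes exactly under \eqref{eq:RH}; the source is bounded and needs no jump condition. Since $\jump{\epss}>0$ by (i), \eqref{eq:RH} is equivalent to the pair $\dot z_s=\mathfrak{s}$ and $\jump{f_s}\mathfrak{s}=\jump{F}$. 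Multiplying the second relation by $\jump{\epss}$ shows that it is equivalent to $H=0$. The step needing the most care is this equivalence together with the sign bookkeeping of Step 1; everything else is routine.
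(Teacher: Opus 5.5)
Your proposal is correct and follows the paper's proof: the chain rule gives $\partial_z=\rho_i^{-1}\partial_x$ and $\partial_t|_z=\partial_t|_x-g_i\partial_x$, the boundary conditions carry over unchanged, the standard equivalence ``weak solution $\Leftrightarrow$ classical on each side plus \eqref{eq:RH}'' (which the paper cites from Godlewski--Raviart and you sketch via the divergence theorem) handles the interface, and $\jump{\epss}>0$ turns \eqref{eq:RH} into $H=0$, $\dot z_s=\mathfrak{s}$. The only slip is the zone-2 sign, which is not a convention to be chosen: $\partial_t\Phi=+(z_b-z)\dot\rho_2/\rho_2^2$ with $\dot\rho_2=-\dot z_s/(z_b-z_s(0))$, so $\partial_t\Phi=-g_2$ exactly as $\partial_t\Phi=-g_1$ on zone 1.
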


\begin{proof}
By the chain rule,
\begin{equation}\label{eq:app_diffeo}
  \partial_z=\rho_i^{-1}\partial_x,\qquad
  \partial_t|_z=\partial_t|_x-g_i\,\partial_x,
\end{equation}
so \eqref{eq:model} becomes the first line of \eqref{eq:app_FBVP}, and
\eqref{eq:BCtop}--\eqref{eq:BCbot} the second one; $\hat A^{(i)}$ and
$\hat S^{(i)}$ are the $A_c^{(i)}$ and $S_c^{(i)}$ of \eqref{eq:Ac} with
$z_s^*$ replaced by $z_s(0)$. A function that is $C^1$ on each side of the
$C^1$ curve $z_s$ is a weak solution of \eqref{eq:model} if and only if it is a
classical solution on each side and satisfies \eqref{eq:RH} across the curve
\cite[Ch.~1]{godlewski_raviart_2021}; since $\jump{\epss}>0$, \eqref{eq:RH} is
equivalent to the third line. Since $\Phi(t,\cdot)$ is an affine bijection of
each zone, $C^1$ in $t$, the regularity is the same in both variables.
\end{proof}

By \eqref{eq:jac} and $x=z/\rho_1$, $z_b-x=(z_b-z)/\rho_2$, the eigenvalues
of $\hat A^{(i)}$ are
\begin{align}
  \hat\lambda^{(1)}&=\rho_1^{-1}v_s-g_1
   =\rho_1^{-1}\Bigl(v_s-\frac{z}{z_s}\dot z_s\Bigr),
   \label{eq:app_lam1}\\
  \hat\lambda^{(2)}_{1,2}&=\rho_2^{-1}(v_s\mp c_s)-g_2
   =\rho_2^{-1}\bigl(v_{\rm rel}\mp c_s\bigr),
  &&v_{\rm rel}\triangleq v_s-\frac{z_b-z}{z_b-z_s}\dot z_s .
  \label{eq:app_lam2}
\end{align}
On $\Om_{1,0}$, $\sigma_e'=0$, so $\hat\lambda^{(1)}$ is a double eigenvalue
and
\begin{equation*}
  \hat A^{(1)}-\hat\lambda^{(1)}\mathbf{I}=\rho_1^{-1}\bigl(A-v_s\mathbf{I}\bigr)
  =\rho_1^{-1}\begin{pmatrix}-v_s&1\\-v_s^2&v_s\end{pmatrix}\neq0:
\end{equation*}
$\hat A^{(1)}$ has the single eigenvector $(1,v_s)^\top$ (Jordan block).
On $\Om_{2,0}$, since $-\hat\lambda^{(2)}_1=\rho_2^{-1}(c_s-v_{\rm rel})$,
\begin{equation}\label{eq:app_minlam}
  \min\bigl\{-\hat\lambda^{(2)}_1,\hat\lambda^{(2)}_2\bigr\}
  =\rho_2^{-1}\bigl(c_s-|v_{\rm rel}|\bigr).
\end{equation}
At $t=0$, $\rho_i=1$ and \Cref{ass:wp}(ii) reads
$\hat\lambda^{(1)}(0,\cdot)>0$ and
$\hat\lambda^{(2)}_1(0,\cdot)<0<\hat\lambda^{(2)}_2(0,\cdot)$;
$\hat\lambda^{(2)}_1$ and $\hat\lambda^{(2)}_2$ are the mapped $\lambda_4$ and
$\lambda_3$ of \Cref{sec:model}.

\subsubsection{Mapped margin function}\label{sssec:app_margin}

\Cref{thm:wp} is stated with the margin function $\Nc$ of the physical
variables. Since the proof works in the mapped variables, we use the
counterpart of $\Nc$ in these variables,
\begin{equation}\label{eq:app_margin}
\begin{aligned}
  \widehat{\Nc}_0(t)&\triangleq
  \sum_{i=1}^{2}\bigl\|\hat y^{(i)}(t,\cdot)\bigr\|_{C^0(\overline{\Om_{i,0}})}
  +\frac{1}{\jump{\epss}}+\frac{1}{\hat m_{\rm kin}}+\frac{1}{m_{\rm phase}}\\
  &\qquad+\frac{1}{\mathrm{dist}(z_s,\partial\Om)}
  +\frac{1}{\inf_{\Om}\epss}+\frac{1}{1-\sup_{\Om}\epss},\\
  \widehat{\Nc}(t)&\triangleq\widehat{\Nc}_0(t)
  +\sum_{i=1}^{2}\bigl\|\partial_x\hat y^{(i)}(t,\cdot)\bigr\|_{C^0(\overline{\Om_{i,0}})}
  +\bigl\|\partial_x^2v_s^{(1)}(t,\cdot)\bigr\|_{C^0(\overline{\Om_{1,0}})},
\end{aligned}
\end{equation}
with $\hat m_{\rm kin}\triangleq\min\bigl\{\inf_{\Om_{1,0}}\hat\lambda^{(1)},\
\inf_{\Om_{2,0}}\min\{-\hat\lambda^{(2)}_1,\hat\lambda^{(2)}_2\}\bigr\}$, so that
$\hat m_{\rm kin}>0$ encodes the three inequalities
$\hat\lambda^{(1)}>0$, $\hat\lambda^{(2)}_1<0<\hat\lambda^{(2)}_2$.
The other terms are those of \eqref{eq:margin}, unchanged because
$\Phi(t,\cdot)$ is a bijection. Since $\jump{\epss}\le1$,
$\widehat{\Nc}\ge1$. For a base time $t_0$, $\widehat{\Nc}^{[t_0]}$ denotes
\eqref{eq:app_margin} computed with the map \eqref{eq:app_rho} based at
$t_0$, that is, with $z_s(t_0)$ in place of $z_s(0)$.

\begin{lemma}[$\widehat{\Nc}$ and $\Nc$ blow up together]\label{lem:Nequiv}
Let $d(t)\triangleq\mathrm{dist}(z_s(t),\partial\Om)$ and $d_0\triangleq d(0)$.
Along an admissible interface solution, for every $t$,
\begin{gather}
  \widehat{\Nc}(t)\le\Bigl(\frac{z_b}{d_0}\Bigr)^2\Nc(t),\qquad
  \Nc(t)\le(1+z_b)^2\bigl(1+\widehat{\Nc}(t)\bigr)^3,\label{eq:Nequiv}\\
  \bigl\|\partial_x\hat y(t,\cdot)\bigr\|\le\frac{z_b}{d_0}\,
  \bigl\|\partial_zy(t,\cdot)\bigr\|,\qquad
  \hat m_{\rm kin}(t)\ge\frac{d_0}{z_b}\,m_{\rm kin}(t).\label{eq:Nequiv2}
\end{gather}
\end{lemma}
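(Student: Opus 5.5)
The plan is to compare the two margin functions term by term, using only the explicit form of the map \eqref{eq:app_rho} and of the eigenvalues \eqref{eq:app_lam1}--\eqref{eq:app_minlam}. The first step is to bound the scaling factors. Since $z_s(0)\ge d_0$ and $z_b-z_s(0)\ge d_0$, and $z_s(t),z_b-z_s(t)\le z_b$, we get $\rho_i(t)\le z_b/d_0$. Similarly, since $z_s(0),z_b-z_s(0)\le z_b$ and $z_s(t),z_b-z_s(t)\ge d(t)$, we get $\rho_i(t)^{-1}\le z_b/d(t)\le z_b\,\Nc$ (and likewise with $\widehat{\Nc}$, since the term $1/d$ is common to both). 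By \eqref{eq:app_diffeo}, $\partial_x\hat y^{(i)}=\rho_i\,\partial_zy$, which gives the first bound of \eqref{eq:Nequiv2}. For $\partial_x^2v_s^{(1)}=\rho_1^2\partial_z^2v_s$ we obtain the factor $(z_b/d_0)^2$.

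For the kinematic margin, \eqref{eq:app_lam1} and \eqref{eq:app_minlam} show that each mapped speed is $\rho_i^{-1}$ times the corresponding physical margin quantity in \eqref{eq:margins}. Hence $\hat m_{\rm kin}\ge\min_i\rho_i^{-1}m_{\rm kin}\ge(d_0/z_b)\,m_{\rm kin}$, which is the second bound of \eqref{eq:Nequiv2}. Conversely, $m_{\rm kin}\ge\min_i\rho_i\,\hat m_{\rm kin}$ with $\rho_i\ge d(t)/z_b$. The terms $\|y\|_{C^0}$, $1/\jump{\epss}$, $1/m_{\rm phase}$, $1/d$, $1/\inf\epss$ and $1/(1-\sup\epss)$ are identical in both functions, because $\Phi(t,\cdot)$ is a bijection of each zone. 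Summing, and using $z_b/d_0\ge1$ (indeed $\ge2$), gives $\widehat{\Nc}\le(z_b/d_0)^2\Nc$.

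For the reverse inequality the only difficulty is that the constant must not depend on $d_0$. I would use the common term $1/d(t)\le\widehat{\Nc}$ instead, so that $\rho_i^{-1}\le z_b\widehat{\Nc}$. Then $\|\partial_zy\|\le z_b\widehat{\Nc}\cdot\widehat{\Nc}$, $\|\partial_z^2v_s\|\le z_b^2\widehat{\Nc}^3$, and $1/m_{\rm kin}\le\rho_i^{-1}/\hat m_{\rm kin}\le z_b\widehat{\Nc}^2$. Collecting the terms, each is bounded by $(1+z_b)^2(1+\widehat{\Nc})^3$ divided among the summands. This step needs the most care: one must check that the number of summands is absorbed by the expansion of $(1+\widehat{\Nc})^3$ and by $\widehat{\Nc}\ge1$. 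If the bookkeeping is tight, I would group the summands as degree-$1$, degree-$2$ and degree-$3$ monomials in $\widehat{\Nc}$ with coefficients $1$, $z_b$ and $z_b^2$, and compare them with the binomial expansion.
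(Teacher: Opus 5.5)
Your proposal is correct and follows the paper's proof essentially step for step. The shared steps are the two-sided bound $d/z_b\le\rho_i\le z_b/d_0$, the scalings $\partial_x\hat y=\rho_i\partial_z y$, $\partial_x^2 v_s^{(1)}=\rho_1^2\partial_z^2 v_s$ and $\hat m_{\rm kin}=\min_i\rho_i^{-1}m_{{\rm kin},i}$, and, for the reverse inequality, the common term $1/d\le\widehat{\Nc}$, which gives $\Nc\le\widehat{\Nc}+2z_b\widehat{\Nc}^{\,2}+z_b^2\widehat{\Nc}^{\,3}$. One bookkeeping detail: the degree-two coefficient is $2z_b$, not $z_b$, because both the gradient and the kinematic terms contribute. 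It is still absorbed by the term $6z_b\widehat{\Nc}^{\,2}$ in the expansion of $(1+z_b)^2(1+\widehat{\Nc})^3$.
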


\begin{proof}
From $d\le z_s\le z_b-d$ and $d_0\le z_s(0)\le z_b-d_0$,
\begin{equation}\label{eq:app_rhobound}
  \frac{d}{z_b}\le\rho_i(t)\le\frac{z_b}{d_0},\qquad i=1,2 .
\end{equation}
Let $m_{{\rm kin},1}$ and $m_{{\rm kin},2}$ be the two terms of $m_{\rm kin}$
in \eqref{eq:margins}, so that $m_{\rm kin}=\min_im_{{\rm kin},i}$. By
\eqref{eq:app_diffeo}, \eqref{eq:app_lam1} and \eqref{eq:app_minlam},
\begin{equation}\label{eq:app_scal}
  \partial_x\hat y^{(i)}=\rho_i\,\partial_zy,\qquad
  \partial_x^2v_s^{(1)}=\rho_1^2\,\partial_z^2v_s,\qquad
  \hat m_{\rm kin}=\min_{i=1,2}\rho_i^{-1}m_{{\rm kin},i}.
\end{equation}
With \eqref{eq:app_rhobound}, \eqref{eq:app_scal} gives \eqref{eq:Nequiv2} and
\begin{equation}\label{eq:app_back}
  \|\partial_zy\|\le\frac{z_b}{d}\,\|\partial_x\hat y\|,\qquad
  \|\partial_z^2v_s\|\le\frac{z_b^2}{d^2}\,\|\partial_x^2v_s^{(1)}\|,\qquad
  \frac1{m_{\rm kin}}\le\frac{z_b}{d}\,\frac1{\hat m_{\rm kin}} .
\end{equation}
The terms of $\widehat{\Nc}$ and $\Nc$ coincide except the gradients, the
second derivative and the kinematic margin, which satisfy, by
\eqref{eq:Nequiv2}, \eqref{eq:app_scal} and \eqref{eq:app_rhobound},
$\|\partial_x\hat y\|\le\frac{z_b}{d_0}\|\partial_zy\|$,
$\|\partial_x^2v_s^{(1)}\|\le\frac{z_b^2}{d_0^2}\|\partial_z^2v_s\|$ and
$\hat m_{\rm kin}^{-1}\le\frac{z_b}{d_0}m_{\rm kin}^{-1}$. Since
$d_0\le z_b/2$, $z_b/d_0\le z_b^2/d_0^2$, which gives the first inequality of
\eqref{eq:Nequiv}. Conversely, $1/d$ is a term of $\widehat{\Nc}$, so
$z_b/d\le z_b\widehat{\Nc}$, and \eqref{eq:app_back} gives
\begin{equation*}
  \Nc\le\widehat{\Nc}+z_b\widehat{\Nc}\cdot\widehat{\Nc}
  +z_b\widehat{\Nc}\cdot\widehat{\Nc}+z_b^2\widehat{\Nc}^{\,2}\cdot\widehat{\Nc}
  =\widehat{\Nc}+2z_b\widehat{\Nc}^{\,2}+z_b^2\widehat{\Nc}^{\,3}
  \le(1+z_b)^2\bigl(1+\widehat{\Nc}\bigr)^3 .
\end{equation*}
\end{proof}

By \eqref{eq:Nequiv}, $\limsup_{t\to T}\Nc(t)=\infty$ if and only if
$\limsup_{t\to T}\widehat{\Nc}(t)=\infty$, and by \eqref{eq:Nequiv2},
\begin{equation}\label{eq:app_reduce}
  \bigl\|\partial_x\hat y(t_n)\bigr\|\to\infty\ \Rightarrow\
  \bigl\|\partial_zy(t_n)\bigr\|\to\infty,\qquad
  \hat m_{\rm kin}(t_n)\to0\ \Rightarrow\ m_{\rm kin}(t_n)\to0 .
\end{equation}
It is therefore enough to prove \Cref{thm:wp} with $\widehat{\Nc}$,
$\hat m_{\rm kin}$ and $\partial_x\hat y$ in place of $\Nc$, $m_{\rm kin}$ and
$\partial_zy$.

\subsection{Boundary conditions}\label{ssec:app_bc}

In \Cref{sssec:app_h}, we solve the interface condition $H=0$ explicitly
for the incoming Riemann invariant. In \Cref{sssec:app_bccount}, we check
that the boundary value problem on each zone is well posed, and write the
conditions at $x=z_b$ and $x=z_s(0)^+$ as explicit expressions of the
incoming Riemann invariant in terms of known or outgoing quantities.

On $\Om_{2,0}$ we use the Riemann invariants
\begin{equation}\label{eq:app_RI}
  R_1=v_s-\Psi(\epss),\qquad R_2=v_s+\Psi(\epss),\qquad
  \Psi(\eps)\triangleq\int_{\eps_c}^{\eps}\frac{\sqrt{\sigma_e'(\eta)}}{\eta}\dd\eta ,
\end{equation}
and write $R_i^+\triangleq R_i(t,z_s(0)^+)$ and
$y^-\triangleq y^{(1)}(t,z_s(0)^-)$.

\subsubsection{The interface condition}\label{sssec:app_h}

The interface condition $H=0$ is a single equation for the traces. We
solve it explicitly for the incoming invariant $R_2^+$, as a function of
$a\triangleq(R_1^+,y^-)=(R_1^+,\epss^-,f_s^-)\in\mathbb{R}^3$, and bound the
derivatives of this function in terms of $M$.

For a vector $\xi$, a scalar function $g$ and a matrix $P$, we use the norms
\begin{equation*}
  |\xi|_\infty\triangleq\max_j|\xi_j|,\qquad
  |\xi|_1\triangleq\sum_j|\xi_j|,\qquad
  |\nabla g|_\infty\triangleq\max_j|\partial_jg|,\qquad
  \|P\|\triangleq\max_i\sum_j|P_{ij}|,
\end{equation*}
so that $|P\xi|_\infty\le\|P\|\,|\xi|_\infty$. For
$0<\epss^-<\eps_c<\epss^+<1$, let
\begin{equation}\label{eq:app_JG}
  \mathcal{J}(\epss^+,\epss^-)\triangleq
  \sqrt{\sigma_e(\epss^+)\Bigl(\frac1{\epss^-}-\frac1{\epss^+}\Bigr)},\qquad
  \mathcal{G}_{\epss^-}(\epss^+)\triangleq\Psi(\epss^+)+\mathcal{J}(\epss^+,\epss^-).
\end{equation}
By \eqref{eq:sigmae}, $\sigma_e$ is increasing on $[\eps_c,1)$ with
$\sigma_e(\eps_c)=0$, and $\Psi'=c_s/\epss>0$, so $\mathcal{J}$ is nondecreasing in
$\epss^+$, $\mathcal{G}_{\epss^-}$ is increasing with
$\mathcal{G}_{\epss^-}'\ge\Psi'>0$, and $\mathcal{G}_{\epss^-}(\eps_c)=0$. Let
$\mathcal{G}_{\epss^-}(1^-)\in(0,+\infty]$ be its limit at $1$, $v_s^-\triangleq
f_s^-/\epss^-$,
\begin{equation}\label{eq:app_Dh}
  D_h\triangleq\Bigl\{a=(R_1^+,\epss^-,f_s^-):\ 0<\epss^-<\eps_c,\
  0<v_s^--R_1^+<\mathcal{G}_{\epss^-}(1^-)\Bigr\},
\end{equation}
Since $\sigma_e$ is increasing,
$\mathcal{G}_{\epss^-}(1^-)=\Psi(1^-)+\sqrt{\sigma_e(1^-)(1/\epss^--1)}$ is a
continuous function of $\epss^-\in(0,\eps_c)$ with values in $(0,+\infty]$,
so $D_h$ is open. For $a\in D_h$, let
\begin{equation}\label{eq:app_hexpl}
  \epss^+(a)\triangleq\mathcal{G}_{\epss^-}^{-1}\bigl(v_s^--R_1^+\bigr),\qquad
  h(a)\triangleq R_1^++2\Psi\bigl(\epss^+(a)\bigr),\qquad
  y(a)\triangleq\bigl(\mathfrak{r}^{-1}(R_1^+,h(a)),\,y^-\bigr),
\end{equation}
where $\mathfrak{r}:(\epss,f_s)\mapsto(R_1,R_2)$, so that $y(a)$ are the
traces obtained from $a$. For $M\ge1$, let $A_M$ be the set of the $a\in D_h$
whose traces $y(a)=(y^+,y^-)$ satisfy
\begin{equation}\label{eq:app_AM}
  |f_s^\pm|\le2M,\qquad
  \tfrac1{2M}\le\epss^-\le\eps_c-\tfrac1{2M},\qquad
  \eps_c+\tfrac1{2M}\le\epss^+\le1-\tfrac1{2M},
\end{equation}
and let $\Theta_h(M)\triangleq40M^5$.

\begin{lemma}[Interface function]\label{lem:app_h}
\begin{enumerate}[label=(\roman*),leftmargin=2em,itemsep=1pt]
\item The function $h$ is $C^1$ on the open set $D_h$. Let $y^\pm$ be traces
with $0<\epss^-<\eps_c<\epss^+<1$, and $a=(R_1^+,y^-)$. Then
\begin{equation*}
  H(y^+,y^-)=0\ \text{and}\ v_s^->\mathfrak{s}
  \quad\Longleftrightarrow\quad
  a\in D_h\ \text{and}\ R_2^+=h(a),
\end{equation*}
and, in this case, with $\mathcal{J}=\mathcal{J}(\epss^+,\epss^-)$,
\begin{equation}\label{eq:app_hug}
  v_s^+=v_s^--\mathcal{J},\qquad
  v_s^--\mathfrak{s}=\frac{\epss^+\mathcal{J}}{\jump{\epss}},\qquad
  v_s^++c_s^+-\mathfrak{s}=\frac{\epss^-\mathcal{J}}{\jump{\epss}}+c_s^+>0 .
\end{equation}
\item Let $M\ge1$. For $a\in A_M$, $|\nabla h(a)|_\infty\le\Theta_h(M)$, and,
for $a,a'\in A_M$, $|h(a)-h(a')|\le\Theta_h(M)\,|a-a'|_1$.
\end{enumerate}
\end{lemma}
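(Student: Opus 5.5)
Part (i) is algebra on the Hugoniot locus, and part (ii) is differentiation of the explicit formula \eqref{eq:app_hexpl} with bounds on $A_M$. I will first rewrite $H=0$ in primitive variables. With $f_s^\pm=\epss^\pm v_s^\pm$, the first Rankine--Hugoniot relation gives $\mathfrak{s}=\jump{f_s}/\jump{\epss}$. A direct expansion then gives
\begin{equation*}
  \jump{f_s}^2-\jump{\epss}\jump{f_s^2/\epss}=-\epss^+\epss^-\bigl(v_s^+-v_s^-\bigr)^2 .
\end{equation*}
Since $\sigma_e(\epss^-)=0$, it follows that
\begin{equation*}
  H=-\epss^+\epss^-(v_s^+-v_s^-)^2-\jump{\epss}\,\sigma_e(\epss^+),
  \qquad
  H=0\iff (v_s^+-v_s^-)^2=\mathcal{J}(\epss^+,\epss^-)^2 .
\end{equation*}
(I will double-check the sign conventions so that the square comes out as $\sigma_e(\epss^+)(1/\epss^--1/\epss^+)$.) This leaves two branches, $v_s^+=v_s^-\pm\mathcal{J}$. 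Substituting into $\mathfrak{s}$ gives $v_s^--\mathfrak{s}=\epss^+(v_s^--v_s^+)/\jump{\epss}$. So the condition $v_s^->\mathfrak{s}$ (together with $\mathcal{J}>0$, which holds because $\epss^+>\eps_c$) selects exactly the branch $v_s^+=v_s^--\mathcal{J}$. This yields the first two identities of \eqref{eq:app_hug}. The third follows from $v_s^+-\mathfrak{s}=(v_s^--\mathfrak{s})-\mathcal{J}=\epss^-\mathcal{J}/\jump{\epss}$.

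Next I translate the selected branch into Riemann invariants. Using $v_s^+=R_1^++\Psi(\epss^+)$, the relation $v_s^+=v_s^--\mathcal{J}$ becomes $\mathcal{G}_{\epss^-}(\epss^+)=v_s^--R_1^+$. Because $\mathcal{G}_{\epss^-}$ is a strictly increasing bijection from $(\eps_c,1)$ onto $(0,\mathcal{G}_{\epss^-}(1^-))$, this equation has a solution $\epss^+\in(\eps_c,1)$ if and only if $a\in D_h$, and the solution is $\epss^+(a)$. Then $R_2^+=R_1^++2\Psi(\epss^+)=h(a)$. This proves both directions of (i). For the $C^1$ claim, $\mathcal{G}(\epss^+,\epss^-)$ is $C^1$ jointly on $(\eps_c,1)\times(0,\eps_c)$. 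The one delicate point is $\mathcal{J}$, a square root that vanishes at $\eps_c$; but on the open interval $\sigma_e>0$, so $\mathcal{J}$ is $C^1$ there. Since $\partial_{\epss^+}\mathcal{G}\ge\Psi'=c_s/\epss>0$, the implicit function theorem gives $\epss^+(a)\in C^1(D_h)$, and hence $h\in C^1(D_h)$.

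For (ii) I differentiate implicitly:
\begin{equation*}
  \partial_a\epss^+=\frac{\partial_a(v_s^--R_1^+)-\partial_{\epss^-}\mathcal{J}\,\partial_a\epss^-}{\Psi'(\epss^+)+\partial_{\epss^+}\mathcal{J}} ,
  \qquad
  \nabla h=e_1+2\Psi'(\epss^+)\nabla\epss^+ .
\end{equation*}
Since $\partial_{\epss^+}\mathcal{J}\ge0$, the denominator is at least $\Psi'$, so the factor $2\Psi'$ cancels against it and
\begin{equation*}
  |\nabla h|\le1+2\bigl(|\nabla v_s^-|+1+|\partial_{\epss^-}\mathcal{J}|\bigr).
\end{equation*}
On $A_M$ I have $|\partial v_s^-/\partial f_s^-|=1/\epss^-\le2M$ and $|\partial v_s^-/\partial\epss^-|=|f_s^-|/(\epss^-)^2\le8M^3$. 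For $\mathcal{J}$,
\begin{equation*}
  \partial_{\epss^-}\mathcal{J}=-\frac{\sigma_e(\epss^+)}{2(\epss^-)^2\mathcal{J}} ,
\end{equation*}
and I will bound this by writing $\sigma_e/\mathcal{J}=\sqrt{\sigma_e}/\sqrt{1/\epss^--1/\epss^+}$. The numerator needs a bound on $\sigma_e(\epss^+)$. I will get it from the Hugoniot relation, which gives $\mathcal{J}=|v_s^+-v_s^-|\le8M^2$ on $A_M$, together with $1/\epss^--1/\epss^+\ge\jump{\epss}\ge1/M$. The denominator uses the same lower bound $\jump{\epss}\ge1/M$.

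\textbf{The main obstacle} is this bookkeeping: combining the pieces into the specific constant $40M^5$, the dominant terms being of order $M^3\cdot M^2$. The Lipschitz estimate then follows from the mean value theorem. The difficulty there is that $A_M$ need not be convex. I would handle this either by working on a convex neighbourhood where the $2M$ bounds degrade only by a factor of two (which is why $A_M$ uses $1/(2M)$ rather than $1/M$), or by noting that the bound is only applied along segments that stay in $A_{2M}$, and absorbing the resulting factor into the constant.
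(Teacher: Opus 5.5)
Part (i) is correct, and your route is slightly more direct than the paper's. The paper works in the shock frame with the mass flux $j$ and needs a separate converse step; you write $H$ in primitive variables and use $v_s^--\mathfrak{s}=\epss^+(v_s^--v_s^+)/\jump{\epss}$, which gives both directions at once.

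The sign you flagged is wrong. In fact $\jump{f_s}^2-\jump{\epss}\jump{f_s^2/\epss}=+\epss^+\epss^-(v_s^+-v_s^-)^2$, so $H=\epss^+\epss^-(v_s^+-v_s^-)^2-\jump{\epss}\sigma_e(\epss^+)$; with your sign $H$ would be strictly negative.

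In the gradient bound, you estimate $\sigma_e(\epss^+)$ and $1/\epss^--1/\epss^+\ge\jump{\epss}$ separately. This discards a factor $1/\epss^-$ and gives $|\partial_{\epss^-}\mathcal J|\le16M^5$, hence about $48M^5$ for $|\nabla h|_\infty$, which exceeds $\Theta_h$. The identity $\partial_{\epss^-}\mathcal J=-\epss^+\mathcal J/(2\epss^-\jump{\epss})$ gives $8M^4$ directly, and then $|\nabla h|_\infty\le32M^4$.

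\textbf{The genuine gap is the Lipschitz bound on $A_M$.} Neither of your two fixes is available. A segment joining two points of $A_M$ need not stay in $A_{2M}$, nor even in $D_h$, where $h$ is defined. The reason is that $a\mapsto v_s^--R_1^+$ is not affine. Along the segment, $R_1^+$ is averaged with weights $1-\theta,\theta$, while $v_s^-=f_s^-/\epss^-$ is averaged with weights proportional to $(1-\theta)\epss^{-,1}$ and $\theta\epss^{-,2}$. So $v_s^--R_1^+$ can exceed both endpoint values by an amount of order $|v_s^{-,1}-v_s^{-,2}|$, that is, of order $M$.

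Here is a concrete example. Take $\epss^{-,1}=\frac1{2M}$, $f_s^{-,1}=0$, $\epss^{-,2}=\eps_c-\frac1{2M}$, $v_s^{-,2}=2M$, with $R_1^{+,k}$ chosen so that $\epss^+(a^k)=1-\frac1{2M}$.
\begin{itemize}
\item If $\sigma_e$ is bounded on $[\eps_c,1)$, as for \eqref{eq:closures}, both points lie in $A_M$ for $M$ large.
\item At the midpoint, $v_s^--R_1^+\ge M-1/\eps_c$.
\item $\mathcal G_{\eps_c/2}(1^-)$ is bounded independently of $M$.
\end{itemize}
So the midpoint is outside $D_h$, and the mean value theorem cannot even be written. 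The lemma is stated for arbitrary pairs in $A_M$. Also, the $\frac1{2M}$ in $A_M$ comes from the margins of $\mathcal S_{2M}$, not from any convexity consideration.

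\textbf{The missing idea} is to avoid integrating $\nabla h$ along segments and to use the monotonicity of $\mathcal G_{\epss^-}$ instead. Write $h(a^1)-h(a^2)=(R_1^{+,1}-R_1^{+,2})+2\bigl(\Psi(\epss^{+,1})-\Psi(\epss^{+,2})\bigr)$. Since $\mathcal J(\cdot,\epss^{-,1})$ is nondecreasing,
\[
  \bigl|\Psi(\epss^{+,1})-\Psi(\epss^{+,2})\bigr|
  \le\bigl|\mathcal G_{\epss^{-,1}}(\epss^{+,1})-\mathcal G_{\epss^{-,1}}(\epss^{+,2})\bigr|
  =\bigl|\mathsf g^1-\mathsf g^2-\mathcal J(\epss^{+,2},\epss^{-,1})+\mathcal J(\epss^{+,2},\epss^{-,2})\bigr|,
\]
with $\mathsf g^k=v_s^{-,k}-R_1^{+,k}$. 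The two differences are then handled separately:
\begin{itemize}
\item $\mathsf g^1-\mathsf g^2$ needs only the Lipschitz bound of $f_s^-/\epss^-$ on the convex box $\{\epss^-\ge\frac1{2M},\ |f_s^-|\le2M\}$.
\item The $\mathcal J$-difference is a one-variable mean value estimate in $\epss^-$ between $\epss^{-,1}$ and $\epss^{-,2}$, at fixed $\epss^{+,2}$. There $\epss^-\ge\frac1{2M}$ and $\epss^{+,2}-\epss^-\ge\frac1M$, and $\mathcal J(\epss^{+,2},\epss^-)$ is controlled from its value at $\epss^{-,2}$ through $\mathcal J^2\epss^+\epss^-/(\epss^+-\epss^-)=\sigma_e(\epss^+)$.
\end{itemize}
Together these give $3|\Delta R_1^+|+4M|\Delta f_s^-|+(16M^3+16\sqrt2M^5)|\Delta\epss^-|\le40M^5|a^1-a^2|_1$.
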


\begin{proof}
For (i), we write the Rankine--Hugoniot relations in the frame of the shock;
they reduce to an explicit relation between $v_s^+$ and $v_s^-$, which, in
the Riemann invariants, is the scalar equation
$\mathcal{G}_{\epss^-}(\epss^+)=v_s^--R_1^+$. For (ii), we differentiate this
equation.

Since $\jump{\epss}>0$, $H=0$ is equivalent to $\jump{f_s}=\mathfrak{s}\jump{\epss}$
and $\jump{F}=\mathfrak{s}\jump{f_s}$. Let $w^\pm\triangleq v_s^\pm-\mathfrak{s}$.
Since $f_s=\epss v_s$, $F=\epss v_s^2+\sigma_e(\epss)$ by \eqref{eq:FSP} and
$\sigma_e(\epss^-)=0$, these two relations read
\begin{equation*}
  \epss^+w^+=\epss^-w^-\triangleq j,\qquad
  \epss^+(w^+)^2+\sigma_e(\epss^+)=\epss^-(w^-)^2 .
\end{equation*}
Substituting $w^\pm=j/\epss^\pm$ in the second relation gives
$j^2(\frac1{\epss^-}-\frac1{\epss^+})=\sigma_e(\epss^+)$, that is,
$|j|=\epss^+\epss^-\mathcal{J}/\jump{\epss}$, and $v_s^->\mathfrak{s}$ means $j>0$.
Then $v_s^+-v_s^-=w^+-w^-=-j\jump{\epss}/(\epss^+\epss^-)=-\mathcal{J}$,
$v_s^--\mathfrak{s}=j/\epss^-$ and $v_s^+-\mathfrak{s}=j/\epss^+$, which give
\eqref{eq:app_hug}. Conversely, if $v_s^+=v_s^--\mathcal{J}$, let
$j\triangleq\epss^+\epss^-\mathcal{J}/\jump{\epss}>0$ and
$s'\triangleq v_s^--j/\epss^-$. Then $v_s^\pm-s'=j/\epss^\pm$, so the two
relations hold with $s'$ in place of $\mathfrak{s}$, and the first one gives
$s'=\jump{f_s}/\jump{\epss}=\mathfrak{s}$. Hence
$H=0$ and $v_s^->\mathfrak{s}$ if and only if $v_s^+=v_s^--\mathcal{J}$. Since
$v_s^+=R_1^++\Psi(\epss^+)$ by \eqref{eq:app_RI}, this is
$\mathcal{G}_{\epss^-}(\epss^+)=v_s^--R_1^+$. As $\mathcal{G}_{\epss^-}$ is an
increasing bijection of $(\eps_c,1)$ onto $(0,\mathcal{G}_{\epss^-}(1^-))$, this
holds if and only if $a\in D_h$ and $\epss^+=\epss^+(a)$, and then
$R_2^+=R_1^++2\Psi(\epss^+)=h(a)$. Finally, $\sigma_e\in C^2([\eps_c,1))$ is
positive on $(\eps_c,1)$, so $\mathcal{J}$ and $\Psi$ are $C^2$ there, and, since
$\mathcal{G}_{\epss^-}'>0$, the implicit function theorem shows that
$a\mapsto\epss^+(a)$, hence $h$, is $C^1$ on $D_h$.

For (ii), let $a\in A_M$ and $\nu\triangleq\Psi'/\mathcal{G}_{\epss^-}'\in(0,1]$ at
$\epss^+(a)$. Differentiating $\mathcal{G}_{\epss^-}(\epss^+)=v_s^--R_1^+$ and
$h=R_1^++2\Psi(\epss^+)$, in the variables $(R_1^+,\epss^-,v_s^-)$,
\begin{equation*}
  \partial_{R_1^+}h=1-2\nu,\qquad
  \partial_{v_s^-}h=2\nu,\qquad
  \partial_{\epss^-}h=-2\nu\,\partial_{\epss^-}\mathcal{J},\qquad
  \partial_{\epss^-}\mathcal{J}=-\frac{\epss^+\mathcal{J}}{2\epss^-\jump{\epss}} .
\end{equation*}
By \eqref{eq:app_AM}, $\epss^-\ge\frac1{2M}$, $\jump{\epss}\ge\frac1M$ and
$|v_s^\pm|=|f_s^\pm|/\epss^\pm\le4M^2$, so $\mathcal{J}=v_s^--v_s^+\le8M^2$ and
$|\partial_{\epss^-}\mathcal{J}|\le8M^4$. Since $v_s^-=f_s^-/\epss^-$, with
$|\partial_{f_s^-}v_s^-|\le2M$ and $|\partial_{\epss^-}v_s^-|\le8M^3$, we get,
in the variables $a=(R_1^+,\epss^-,f_s^-)$,
\begin{equation*}
  |\partial_{R_1^+}h|\le1,\qquad
  |\partial_{f_s^-}h|\le4M,\qquad
  |\partial_{\epss^-}h|\le16M^4+16M^3,
\end{equation*}
so $|\nabla h|_\infty\le32M^4\le\Theta_h$. The set $A_M$ need not be convex,
so we prove the Lipschitz bound directly. Let $a^k\in A_M$, $k=1,2$, with
$\epss^{+,k}\triangleq\epss^+(a^k)$ and $\mathsf g^k\triangleq v_s^{-,k}-R_1^{+,k}$. Since
$\mathcal{J}(\cdot,\epss^{-,1})$ is nondecreasing,
\begin{equation*}
  \bigl|\Psi(\epss^{+,1})-\Psi(\epss^{+,2})\bigr|
  \le\bigl|\mathcal{G}_{\epss^{-,1}}(\epss^{+,1})-\mathcal{G}_{\epss^{-,1}}(\epss^{+,2})\bigr|
  =\bigl|\mathsf g^1-\mathsf g^2-\mathcal{J}(\epss^{+,2},\epss^{-,1})+\mathcal{J}(\epss^{+,2},\epss^{-,2})\bigr| .
\end{equation*}
For $\epss^-$ between $\epss^{-,1}$ and $\epss^{-,2}$, the identity
$\mathcal{J}(\epss^+,\epss^-)^2\,\epss^+\epss^-/(\epss^+-\epss^-)=\sigma_e(\epss^+)$
and \eqref{eq:app_AM} give
\begin{equation*}
  \mathcal{J}(\epss^{+,2},\epss^-)^2
  =\mathcal{J}(\epss^{+,2},\epss^{-,2})^2\,
  \frac{\epss^{-,2}}{\epss^-}\,\frac{\epss^{+,2}-\epss^-}{\epss^{+,2}-\epss^{-,2}}
  \le(8M^2)^2\cdot2M\cdot M,
\end{equation*}
so, since $\epss^-\ge\frac1{2M}$ and $\epss^{+,2}-\epss^-\ge\frac1M$,
$|\partial_{\epss^-}\mathcal{J}(\epss^{+,2},\epss^-)|
=\frac{\epss^{+,2}\mathcal{J}}{2\epss^-(\epss^{+,2}-\epss^-)}\le8\sqrt2M^3\cdot M^2$, and
$|\mathcal{J}(\epss^{+,2},\epss^{-,1})-\mathcal{J}(\epss^{+,2},\epss^{-,2})|\le8\sqrt2M^5|\epss^{-,1}-\epss^{-,2}|$.
Since $\{\frac1{2M}\le\epss^-,\ |f_s^-|\le2M\}$ is convex,
$|\mathsf g^1-\mathsf g^2|\le|R_1^{+,1}-R_1^{+,2}|+8M^3|\epss^{-,1}-\epss^{-,2}|+2M|f_s^{-,1}-f_s^{-,2}|$.
Hence
\begin{equation*}
\begin{aligned}
  |h(a^1)-h(a^2)|
  &\le3|R_1^{+,1}-R_1^{+,2}|+4M|f_s^{-,1}-f_s^{-,2}|
  +\bigl(16M^3+16\sqrt2M^5\bigr)|\epss^{-,1}-\epss^{-,2}|\\
  &\le40M^5|a^1-a^2|_1 .
\end{aligned}
\end{equation*}
\end{proof}

The last identity of \eqref{eq:app_hug} shows that, on the branch selected
by $v_s^->\mathfrak{s}$, the interface is automatically transverse to the
fast compression wave: $v_s^++c_s^+-\mathfrak{s}>c_s^+$.

\subsubsection{Number and form of the boundary conditions}\label{sssec:app_bccount}

Let $B\triangleq f_s-\epss v_m$.

\begin{lemma}[Boundary conditions]\label{lem:app_bc}
Along an admissible solution, the speeds at the four boundaries and the
conditions imposed there are
\begin{equation}\label{eq:app_count}
\begin{array}{llll}
  x=0: & \hat\lambda^{(1)}>0, & \text{2 incoming}, &
     \text{conditions \eqref{eq:BCtop}};\\
  x=z_s(0)^-: & \hat\lambda^{(1)}>0, & \text{0 incoming}, &
     \text{no condition};\\
  x=z_s(0)^+: & \hat\lambda^{(2)}_1<0<\hat\lambda^{(2)}_2, & \text{1 incoming}, &
     H=0;\\
  x=z_b: & \hat\lambda^{(2)}_1<0<\hat\lambda^{(2)}_2, & \text{1 incoming}, &
     B=0,
\end{array}
\end{equation}
and each condition is transverse to the incoming eigenvectors. On
$\Om_{2,0}$, the equations read
\begin{equation}\label{eq:app_Rsys}
  \partial_tR_i+\hat\lambda^{(2)}_i\partial_xR_i=S_{\rm R}\triangleq
  K-\mathcal{R}(\epss)(v_s-v_m),\qquad i=1,2 .
\end{equation}
Along an admissible solution, the conditions $B=0$ and $H=0$ are
equivalent to
\begin{equation}\label{eq:app_bdata}
  R_1(t,z_b)=2v_m(t)-R_2(t,z_b),\qquad
    R_2\bigl(t,z_s(0)^+\bigr)=h\bigl(R_1(t,z_s(0)^+),\,y^{(1)}(t,z_s(0)^-)\bigr),
\end{equation}
with $h$ the function \eqref{eq:app_hexpl} of \Cref{lem:app_h}.
\end{lemma}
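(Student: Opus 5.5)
The proof checks the four boundaries one at a time and uses the explicit structure already available. The speeds come from \eqref{eq:app_lam1}--\eqref{eq:app_minlam}. The sign conditions come from \Cref{def:admissible}(ii), encoded as $\hat m_{\rm kin}>0$.

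\emph{Speeds and counts.} At $x=0$ we have $g_1=0$, so $\hat\lambda^{(1)}=\rho_1^{-1}v_s$. Admissibility (ii) at $z=0$ gives $v_s>0$, so both characteristics of the Jordan block enter, and two conditions are needed; these are \eqref{eq:BCtop}. At $x=z_s(0)^-$, the inequality $\hat\lambda^{(1)}>0$ (again (ii)) means both characteristics leave $\Om_{1,0}$, so no condition is imposed. On $\Om_{2,0}$, \eqref{eq:app_minlam} with $\hat m_{\rm kin}>0$ gives $\hat\lambda^{(2)}_1<0<\hat\lambda^{(2)}_2$ at every point, including both endpoints. At $x=z_s(0)^+$ the $R_2$ family ($\hat\lambda_2^{(2)}>0$) enters and $R_1$ leaves. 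At $x=z_b$ the $R_1$ family enters. Each endpoint therefore carries exactly one condition, namely $H=0$ and $B=0$ respectively.

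\emph{Diagonal form \eqref{eq:app_Rsys}.} On $\Om_{2,0}$, write the smooth equations in primitive variables as in \Cref{thm:breakdown}. Add and subtract $\Psi'(\epss)=c_s/\epss$ times the mass equation to the velocity equation. This gives $\partial_tR_{1,2}+(v_s\mp c_s)\partial_zR_{1,2}=K-\mathcal{R}(v_s-v_m)$, since the mass equation has no source. Apply \eqref{eq:app_diffeo}, $\partial_t|_z=\partial_t|_x-g_2\partial_x$ and $\partial_z=\rho_2^{-1}\partial_x$. The speeds become $\rho_2^{-1}(v_s\mp c_s)-g_2=\hat\lambda^{(2)}_{1,2}$, and the source is unchanged.

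\emph{Form of the conditions and transversality.} At $z_b$, note $\epss>\eps_c$ there, so the invariant map $\mathfrak r$ is a diffeomorphism. The condition $B=0$ reads $v_s=v_m$, and $v_s=(R_1+R_2)/2$ turns it into $R_1=2v_m-R_2$. Its derivative with respect to the incoming invariant $R_1$, at fixed $R_2$, is $1\neq0$, which is transversality. At the interface, admissibility gives $0<\epss^-<\eps_c<\epss^+<1$ and $v_s^->\dot z_s=\mathfrak s$ (condition (ii) at $z=z_s$). \Cref{lem:app_h}(i) then shows that $H=0$ is equivalent to $a\in D_h$ together with $R_2^+=h(a)$. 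The right side of this relation involves only the outgoing $R_1^+$ and the trace $y^-$, which is outgoing from $\Om_{1,0}$. The relation is solved for the incoming invariant with coefficient $1$, so it is transverse. For the top boundary, note that $\epss$ and $f_s$ are prescribed; the Jordan block has only one eigenvector $(1,v_s)^\top$, so transversality means the map $(\epss,f_s)\mapsto$ data is invertible. This is trivially true for Dirichlet data on both components.

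The only subtle point is the converse direction at the interface. We must make sure we stay on the branch $j>0$, so that the relation is not the other Hugoniot branch or a non-Lax solution. This is handled by invoking $v_s^->\mathfrak s$ from admissibility, together with the equivalence proved in \Cref{lem:app_h}(i).
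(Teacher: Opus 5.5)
Your proposal follows the paper's proof closely, and those parts are correct: the signs and counts from \Cref{def:admissible}(ii) through $\hat m_{\rm kin}$, the Riemann-invariant form obtained by adding $\pm c_s/\epss$ times the mass equation, and the reduction of $H=0$ to $R_2^+=h(R_1^+,y^-)$ by \Cref{lem:app_h}(i) with $v_s^->\dot z_s=\mathfrak{s}$. The step that falls short is transversality at the interface. You check it for the solved relation $R_2^+-h(R_1^+,y^-)$, whose coefficient in $R_2^+$ is $1$. The condition in the statement, however, is $H=0$, and \Cref{lem:app_h}(i) only says that the two relations have the same zero set on the branch $v_s^->\mathfrak{s}$. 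Equality of zero sets does not transfer non-degeneracy: $(R_2^+-h)^3$ has the same zero set and a vanishing differential on it. So your argument gives transversality of the constraint set, not $\nabla_{y^+}H\cdot\mathbf{r}_2\neq0$ with $\mathbf{r}_2=(1,v_s+c_s)^\top$.

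The missing ingredient is the direct computation the paper does. One has $\nabla_{y^+}H=\bigl(-\jump{F}-\jump{\epss}(c_s^2-v_s^2),\,2\jump{f_s}-2\jump{\epss}v_s\bigr)$. On $H=0$, where $\jump{f_s}=\mathfrak{s}\jump{\epss}$ and $\jump{F}=\mathfrak{s}^2\jump{\epss}$, this gives $\nabla_{y^+}H\cdot\mathbf{r}_2=-\jump{\epss}\,(v_s^++c_s^+-\mathfrak{s})^2$. This is non-zero, indeed at least $\jump{\epss}\rho_2^2\hat m_{\rm kin}^2$ in absolute value, because $v_s^++c_s^+-\mathfrak{s}=\rho_2\hat\lambda^{(2)}_2$ at $x=z_s(0)^+$.

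This is not cosmetic. It shows that $H$ degenerates exactly when the interface becomes characteristic for the incoming family. The paper also uses it later, with \eqref{eq:app_hug}, to factor $H=\varphi\,(R_2^+-h)$ with $\varphi\neq0$. That factorization is what identifies the compatibility condition \eqref{eq:compat_int}, stated for $H$, with the first-order compatibility of the solved condition.

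At the bottom no such issue arises. There $B=\tfrac{\epss}{2}(R_1+R_2-2v_m)$ is an explicit non-vanishing multiple of your solved form, equivalently $\nabla_yB\cdot\mathbf{r}_1=-c_s$. To phrase either check in terms of the incoming eigenvectors, you should also record $\partial y/\partial R_1=-\tfrac{\epss}{2c_s}\mathbf{r}_1$ and $\partial y/\partial R_2=\tfrac{\epss}{2c_s}\mathbf{r}_2$.
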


\begin{proof}
We first check the signs of the speeds and the transversality of the
conditions, then write the equations on $\Om_{2,0}$ in Riemann invariants,
and finally solve the conditions at $x=z_b$ and $x=z_s(0)^+$ for the incoming
invariant.

The signs of the speeds in \eqref{eq:app_count} are the inequalities of
\Cref{def:admissible}. Transversality is the uniform Kreiss--Lopatinskii
condition of \cite[Ch.~2 and 5]{li1985boundary} and
\cite[Ch.~4]{benzoni2007multi}. At $x=0$, \eqref{eq:BCtop} prescribes
$y^{(1)}(t,0)$, so the Jacobian of the condition with respect to $y^{(1)}$ is
$\mathbf{I}_2$, transverse to the whole incoming space $\mathbb{R}^2$,
although $\hat A^{(1)}$ is not diagonalizable. On $\Om_{2,0}$, by
\eqref{eq:jac}, the eigenvectors of $\hat\lambda^{(2)}_1$ and
$\hat\lambda^{(2)}_2$ are
\begin{equation}\label{eq:app_eigvec}
  \mathbf{r}_1=(1,\,v_s-c_s)^\top,\qquad \mathbf{r}_2=(1,\,v_s+c_s)^\top .
\end{equation}
At $x=z_b$, $\nabla_yB=(-v_m,1)$ and, on $B=0$ where $v_s=v_m$,
\begin{equation}\label{eq:app_KLbot}
  \nabla_yB\cdot\mathbf{r}_1=v_s-c_s-v_m=-c_s\neq0 .
\end{equation}
At $x=z_s(0)^+$, by \eqref{eq:FSP},
$\nabla_{y^+}H=\bigl(-\jump{F}-\jump{\epss}(c_s^2-v_s^2),\
2\jump{f_s}-2\jump{\epss}v_s\bigr)$, and on $H=0$,
$\jump{f_s}=\mathfrak{s}\jump{\epss}$ and $\jump{F}=\mathfrak{s}^2\jump{\epss}$, so
\begin{equation}\label{eq:app_KLint}
  \nabla_{y^+}H\cdot\mathbf{r}_2
  =\jump{\epss}\bigl(-\mathfrak{s}^2-c_s^2+v_s^2+2(\mathfrak{s}-v_s)(v_s+c_s)\bigr)
  =-\jump{\epss}\bigl(v_s^{+}+c_s^{+}-\mathfrak{s}\bigr)^2 .
\end{equation}
Along an admissible solution $\mathfrak{s}=\dot z_s$, and at $x=z_s(0)^+$,
where $z=z_s$ and $v_{\rm rel}=v_s^+-\dot z_s$, \eqref{eq:app_lam2} gives
\begin{equation}\label{eq:app_KLpos}
  v_s^++c_s^+-\mathfrak{s}=\rho_2\,\hat\lambda^{(2)}_2\big|_{x=z_s(0)^+}
  \ge\rho_2\,\hat m_{\rm kin}>0,
  \qquad
  \bigl|\nabla_{y^+}H\cdot\mathbf{r}_2\bigr|\ge\jump{\epss}\,\rho_2^2\,\hat m_{\rm kin}^2 .
\end{equation}

On $\Om_{2,0}$, in the primitive variables, \eqref{eq:model} reads
$\partial_t\epss+v_s\partial_z\epss+\epss\partial_zv_s=0$ and
$\partial_tv_s+v_s\partial_zv_s+\frac{c_s^2}{\epss}\partial_z\epss=S_{\rm R}$.
Since $\Psi'=c_s/\eps$,
\begin{equation*}
  \partial_tR_2=\partial_tv_s+\frac{c_s}{\epss}\partial_t\epss
  =S_{\rm R}-(v_s+c_s)\Bigl(\partial_zv_s+\frac{c_s}{\epss}\partial_z\epss\Bigr)
  =S_{\rm R}-(v_s+c_s)\partial_zR_2,
\end{equation*}
and in the same way $\partial_tR_1+(v_s-c_s)\partial_zR_1=S_{\rm R}$; with
\eqref{eq:app_diffeo} and \eqref{eq:app_lam2}, this is \eqref{eq:app_Rsys}.
Since $\Psi'>0$ and $\sigma_e\in C^2$, $\mathfrak{r}:(\epss,f_s)\mapsto(R_1,R_2)$
is a $C^2$-diffeomorphism of $(\eps_c,1)\times\mathbb{R}$ onto its image, with
inverse $v_s=(R_1+R_2)/2$, $\epss=\Psi^{-1}((R_2-R_1)/2)$, and
\begin{equation}\label{eq:app_dyR}
  \frac{\partial y}{\partial R_2}=\frac{\epss}{2c_s}\,\mathbf{r}_2,\qquad
  \frac{\partial y}{\partial R_1}=-\frac{\epss}{2c_s}\,\mathbf{r}_1 .
\end{equation}
At $x=z_b$, $B=0$ reads $v_s=v_m$, that is, $R_1=2v_m-R_2$. At
$x=z_s(0)^+$, along an admissible solution, $\epss^-<\eps_c<\epss^+$ and,
by \eqref{eq:app_lam1}, $v_s^--\dot z_s=\rho_1\hat\lambda^{(1)}(t,z_s(0)^-)>0$
with $\dot z_s=\mathfrak{s}$; by \Cref{lem:app_h}(i), $H=0$ is then
equivalent to $R_2^+=h(R_1^+,y^-)$.
\end{proof}

\subsection{The frozen problem}\label{ssec:app_frozen}

We replace the unknown interface velocity by a given function $\zeta$. The
interface is then a known curve, and the problem on each zone can be solved
by characteristics; this subsection sets the notation and derives the
equations along the characteristics.

For $\tau,\delta,M'>0$ let
\begin{equation}\label{eq:app_K}
  \mathcal{Z}_{\tau,\delta,M'}=\Bigl\{\zeta\in C^0([0,\tau]):\
  \zeta(0)=\dot z_{s,0},\
  \|\zeta-\dot z_{s,0}\|_\tau\le\delta,\
  \mathrm{Lip}(\zeta)\le M'\Bigr\},
\end{equation}
a closed subset of $C^0([0,\tau])$. For $\zeta\in\mathcal{Z}_{\tau,\delta,M'}$,
let $z_\zeta(t)\triangleq z_s(0)+\int_0^t\zeta$. The frozen problem
is the first two lines of \eqref{eq:app_FBVP} on $\Om_{1,0}$ and
\eqref{eq:app_Rsys}--\eqref{eq:app_bdata} on $\Om_{2,0}$, with
$(z_s,\dot z_s)$ replaced by $(z_\zeta,\zeta)$. In the interface condition
$R_2^+=h(R_1^+,y^-)$ of \eqref{eq:app_bdata}, $h$ is the explicit function
of \Cref{lem:app_h}, defined on the open set $D_h$. At $t=0$,
\Cref{ass:wp}(iii) gives $H=0$ and \Cref{ass:wp}(ii) gives
$v_s^--\dot z_{s,0}=\hat\lambda^{(1)}(0,z_s(0)^-)>0$, so, by
\Cref{lem:app_h}(i), $(R_1^+,y^-)(0)\in D_h$ and
$R_2^+(0)=h\bigl((R_1^+,y^-)(0)\bigr)$. Near these traces, $H$ and
$R_2^+-h(R_1^+,y^-)$ vanish on the same set and have nonzero derivatives
with respect to $R_2^+$ (by \eqref{eq:app_KLint} and \eqref{eq:app_hug}), so
$H=\varphi\cdot(R_2^+-h(R_1^+,y^-))$ with $\varphi$ continuous and nonzero
(Hadamard's lemma), and \eqref{eq:compat_int} is also the first-order
compatibility condition of $R_2^+=h(R_1^+,y^-)$. The frozen problem is considered as long as
$(R_1^+,y^-)(t)\in D_h$. We denote its solution by $(y^{(1)}_\zeta,y^{(2)}_\zeta)$, and let
\begin{equation}\label{eq:app_T}
  \mathcal{T}(\zeta)\triangleq\mathfrak{s}\bigl(y^{(1)}_\zeta,y^{(2)}_\zeta\bigr)
  =\frac{f_s^{(2)}-f_s^{(1)}}{\epss^{(2)}-\epss^{(1)}}\Big|_{x=z_s(0)} .
\end{equation}
If $\zeta=\mathcal{T}(\zeta)$, then $\dot z_\zeta=\mathfrak{s}$, and
$(y^{(1)}_\zeta,y^{(2)}_\zeta,z_\zeta)$ solves \eqref{eq:app_FBVP}, since
$R_2^+=h(R_1^+,y^-)$ gives $H=0$ at the interface (\Cref{lem:app_h}(i)) and
$B=0$ is equivalent to the condition at $x=z_b$ (\Cref{lem:app_bc}). Recall that $d_0=\min\{z_s(0),z_b-z_s(0)\}$. If
$\tau(|\dot z_{s,0}|+\delta)\le d_0/2$, then $|z_\zeta-z_s(0)|\le d_0/2$, and
\begin{equation}\label{eq:app_thetaomega}
\begin{gathered}
  \theta_\zeta\triangleq\frac{z_s(0)}{z_\zeta},\quad
  \theta^{(2)}_\zeta\triangleq\frac{z_b-z_s(0)}{z_b-z_\zeta},\qquad
  \theta_\zeta,\ \theta^{(2)}_\zeta\in\bigl[\tfrac23,2\bigr],\\
  \omega_\zeta\triangleq\frac{\zeta}{z_\zeta},\quad
  \omega^{(2)}_\zeta\triangleq\frac{\zeta}{z_b-z_\zeta},\qquad
  |\omega_\zeta|,|\omega^{(2)}_\zeta|\le\frac{2(|\dot z_{s,0}|+\delta)}{d_0}.
\end{gathered}
\end{equation}
In these variables $g_1=x\,\omega_\zeta$, $g_2=(z_b-x)\,\omega^{(2)}_\zeta$,
$\rho_1^{-1}=\theta_\zeta$ and $\rho_2^{-1}=\theta^{(2)}_\zeta$. Let
$\Gamma_{in}\triangleq(\{0\}\times[0,z_s(0)])\cup([0,\tau]\times\{0\})$ be the
inflow boundary of $\Om_{1,0}$, which we parametrize by
$\alpha\in[-\tau,z_s(0)]$: $\alpha\ge0$ stands for the point $(0,\alpha)$ and
$\alpha<0$ for $(-\alpha,0)$. For a characteristic $X$, denote by a bar the values along $X$. As long as the speeds have the signs of \eqref{eq:app_count}, a
characteristic followed backward in time reaches $\Gamma_{in}$ on
$\Om_{1,0}$ and, on $\Om_{2,0}$, either the line $t=0$ or the boundary at
which its family is incoming ($x=z_b$ for $R_1$, $x=z_s(0)^+$ for $R_2$). We
call this point the starting point of $X$ and denote its time by $t_\alpha$, where
$\alpha$ labels the starting point (on $\Om_{1,0}$, $\alpha$ is the parameter of $\Gamma_{in}$ defined above). At the
starting point, the solution is given by the initial data or by the boundary
condition.

\begin{lemma}[Characteristic equations]\label{lem:app_char}
Let $(y^{(1)}_\zeta,y^{(2)}_\zeta)$ be a solution of the frozen problem with
$y^{(i)}_\zeta\in C^1$ and $\partial_xv_s^{(1)}\in C^1$. On $\Om_{1,0}$,
\begin{equation}\label{eq:app_prim}
  \partial_tv_s+\hat\lambda^{(1)}\partial_xv_s=K-\gamma(v_s-v_m),\qquad
  \partial_t\epss+\hat\lambda^{(1)}\partial_x\epss
  =-\theta_\zeta\epss\,\partial_xv_s,\qquad
  \hat\lambda^{(1)}=\theta_\zeta v_s-x\,\omega_\zeta ,
\end{equation}
and, along $\partial_tX=\hat\lambda^{(1)}(t,X)$, with
$p\triangleq\partial_xv_s$, $r\triangleq\partial_x^2v_s$ and
$q\triangleq\partial_x\epss$,
\begin{align}
  \frac{\dd}{\dd t}\bar v_s&=K-\gamma(\bar v_s-v_m),&
  \frac{\dd}{\dd t}\bar p&=-\theta_\zeta\bar p^{\,2}+(\omega_\zeta-\gamma)\bar p,&
  \frac{\dd}{\dd t}\bar r&=-3\theta_\zeta\bar p\,\bar r+(2\omega_\zeta-\gamma)\bar r,
  \label{eq:app_riccati}\\
  \frac{\dd}{\dd t}\bar\epss&=-\theta_\zeta\bar\epss\,\bar p,&
  \frac{\dd}{\dd t}\bar q&=-2\theta_\zeta\bar p\,\bar q+\omega_\zeta\bar q
  -\theta_\zeta\bar\epss\,\bar r .&&
  \label{eq:app_epsq}
\end{align}
At $x=0$, with $p_0(t)\triangleq p(t,0)$ and
$\hat\lambda^{(1)}(t,0)=\theta_\zeta(v_m+K/\gamma)$,
\begin{gather}
  v_s=v_m+\frac{K}{\gamma},\qquad
  p_0=-\frac{\dot v_m}{\hat\lambda^{(1)}(t,0)},\qquad
  \epss=\epsin,\qquad
  q=-\frac{\dot\epsin+\theta_\zeta\epsin p_0}{\hat\lambda^{(1)}(t,0)},
  \label{eq:app_in0}\\
  r=-\frac{\dot p_0+(\theta_\zeta p_0-\omega_\zeta+\gamma)p_0}
  {\hat\lambda^{(1)}(t,0)},\qquad
  \dot p_0=-\frac{\ddot v_m+p_0\,\partial_t\hat\lambda^{(1)}(t,0)}
  {\hat\lambda^{(1)}(t,0)} .
  \label{eq:app_r0}
\end{gather}
On $\Om_{2,0}$, along $\partial_tX=\hat\lambda^{(2)}_i(t,X)$, the gradients
$P_i\triangleq\partial_xR_i$ satisfy
\begin{equation}\label{eq:app_dxR}
  \frac{\dd}{\dd t}\bar P_i
  =-\Bigl(\partial_x\hat\lambda^{(2)}_i+\sum_{j=1,2}\partial_{R_j}\hat\lambda^{(2)}_i\,\bar P_j\Bigr)\bar P_i
  +\sum_{j=1,2}\partial_{R_j}S_{\rm R}\,\bar P_j,\qquad
  \partial_tR_i=S_{\rm R}-\hat\lambda^{(2)}_iP_i .
\end{equation}
\end{lemma}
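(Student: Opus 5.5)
The plan is to derive every identity by direct differentiation, using only the chain rule \eqref{eq:app_diffeo} and the structure of the frozen problem, and to treat the three blocks (free-settling interior, inlet traces, compression zone) separately. On $\Om_{1,0}$, with $\sigma_e=0$ and $\mathcal{R}=\gamma$, the primitive equations of \Cref{thm:breakdown} are mapped by $\partial_z=\theta_\zeta\partial_x$ and $\partial_t|_z=\partial_t|_x-x\omega_\zeta\partial_x$. This gives \eqref{eq:app_prim} with $\hat\lambda^{(1)}=\theta_\zeta v_s-x\omega_\zeta$, consistent with \eqref{eq:app_lam1}. The $v_s$ equation is closed, so along $\partial_tX=\hat\lambda^{(1)}$ the material derivative gives the first identity of \eqref{eq:app_riccati} at once.

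For $p=\partial_xv_s$, I will differentiate the $v_s$ equation in $x$. This is legitimate since $\partial_xv_s\in C^1$. Using $\partial_x\hat\lambda^{(1)}=\theta_\zeta p-\omega_\zeta$, where $\theta_\zeta,\omega_\zeta$ depend on $t$ only, I get $\partial_tp+\hat\lambda^{(1)}\partial_xp=-(\theta_\zeta p-\omega_\zeta)p-\gamma p$, which is the Riccati equation. Differentiating once more, and assuming $r=\partial_x^2v_s\in C^0$ with the equation understood along characteristics (where it holds classically because $p\in C^1$), gives $\partial_tr+\hat\lambda^{(1)}\partial_xr=-2(\theta_\zeta p-\omega_\zeta)r-\theta_\zeta pr-\gamma r$. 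Collecting terms yields $-3\theta_\zeta pr+(2\omega_\zeta-\gamma)r$. The $\epss$ equation gives $\dd\bar\epss/\dd t$ directly. Differentiating it in $x$ gives $\partial_tq+\hat\lambda^{(1)}\partial_xq=-(\theta_\zeta p-\omega_\zeta)q-\theta_\zeta pq-\theta_\zeta\epss r$, which is \eqref{eq:app_epsq}.

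At $x=0$ the boundary data hold for all $t$, so they can be differentiated in time. Since $v_s(t,0)=v_m+K/\gamma$, the velocity equation at $x=0$ has a vanishing source and reduces to $\dot v_m+\hat\lambda^{(1)}(t,0)p_0=0$, with $\hat\lambda^{(1)}(t,0)=\theta_\zeta(v_m+K/\gamma)$ because $g_1=0$ there. In the same way, $\epss(t,0)=\epsin$ gives $\dot\epsin+\hat\lambda^{(1)}q+\theta_\zeta\epsin p_0=0$. For $r$, I evaluate the differentiated velocity equation at $x=0$, namely $\partial_tp+\hat\lambda^{(1)}r=-(\theta_\zeta p_0-\omega_\zeta+\gamma)p_0$, and replace $\partial_tp(t,0)$ by $\dot p_0$, since $x=0$ is fixed. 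The expression for $\dot p_0$ comes from differentiating $\hat\lambda^{(1)}(t,0)p_0=-\dot v_m$ in time, which requires $v_m\in C^2$. The only delicate point here is regularity: $\dot p_0$ exists because $\ddot v_m$ and $\dot\theta_\zeta$ exist, and $\zeta$ is Lipschitz, so this holds a.e. and continuously where $\zeta\in C^1$.

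On $\Om_{2,0}$ I start from the Riemann form \eqref{eq:app_Rsys}, whose second identity $\partial_tR_i=S_{\rm R}-\hat\lambda^{(2)}_iP_i$ is just a rewriting. I differentiate in $x$, regarding $\hat\lambda^{(2)}_i$ and $S_{\rm R}$ as functions of $(x,R_1,R_2)$, so that $\partial_x[\hat\lambda^{(2)}_i]=\partial_x\hat\lambda^{(2)}_i+\sum_j\partial_{R_j}\hat\lambda^{(2)}_iP_j$ and $\partial_xS_{\rm R}=\sum_j\partial_{R_j}S_{\rm R}P_j$. This gives \eqref{eq:app_dxR}. I expect the main obstacle to be justifying this differentiation when $R_i$ is only $C^1$. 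I would handle it in the standard way \cite{li1985boundary}: derive the identity first for smooth approximations, or equivalently by differentiating the integral form along characteristics with respect to the foot point, and then pass to the limit. The same device applies to $r$ on $\Om_{1,0}$.
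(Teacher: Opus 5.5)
Your proposal is correct and follows the paper's proof. Like the paper, you map the primitive equations, differentiate them once and twice in $x$ using $\partial_x\hat\lambda^{(1)}=\theta_\zeta p-\omega_\zeta$, evaluate at $x=0$, and differentiate the Riemann form on $\Om_{2,0}$ in $x$. Your remark that the differentiated equations must be read along characteristics (via the foot-point derivative), since $r$ and $P_i$ are only continuous, is a point of rigour the paper leaves implicit.

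One slip: $\theta_\zeta=z_s(0)/z_\zeta$ is $C^1$ for merely continuous $\zeta$, with $\dot\theta_\zeta=-z_s(0)\zeta/z_\zeta^2$, and $\dot\omega_\zeta$ does not enter at $x=0$. Hence $\dot p_0$ exists and is continuous for every $t$, and your ``a.e.'' hedge is unnecessary.
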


\begin{proof}
On $\Om_{1,0}$, $\sigma_e=0$ and $\mathcal{R}=\gamma$, so the first line of
\eqref{eq:app_FBVP} in the variables $(v_s,\epss)$ is \eqref{eq:app_prim}.
Since $\partial_x\hat\lambda^{(1)}=\theta_\zeta p-\omega_\zeta$,
differentiating \eqref{eq:app_prim} once and twice in $x$ gives
\eqref{eq:app_riccati}--\eqref{eq:app_epsq}. At $x=0$, $g_1=0$, and
\eqref{eq:BCtop} gives $v_s$ and $\epss$; the first equation of
\eqref{eq:app_prim} at $x=0$, where its right-hand side vanishes, gives $p_0$,
and the second one gives $q$. Differentiating
$\hat\lambda^{(1)}(t,0)\,p_0=-\dot v_m$ in time gives $\dot p_0$, and the
equation for $p$ at $x=0$ gives $r$. On $\Om_{2,0}$, differentiating
\eqref{eq:app_Rsys} in $x$ gives \eqref{eq:app_dxR}.
\end{proof}

The equation for the gradient $q$ of $\epss$ contains $r=\partial_x^2v_s$, which is why
\Cref{def:admissible} and \Cref{ass:wp} ask for $\partial_x^2v_s\in C^0$. For
$\zeta\equiv0$ ($\theta_\zeta=1$, $\omega_\zeta=0$),
\eqref{eq:app_riccati}--\eqref{eq:app_epsq} integrate to
$\bar p(t)=\gamma\bar p(0)e^{-\gamma t}/(\gamma+\bar p(0)(1-e^{-\gamma t}))$ and to
\eqref{eq:derivloss}. By \Cref{def:compat}, the values of
$(v_s,p,r,\epss,q)$ given by $y_0$ on $\{0\}\times[0,z_s(0)]$ and by
\eqref{eq:app_in0}--\eqref{eq:app_r0} on $[0,\tau]\times\{0\}$ coincide at
the corner $(0,0)$.

\subsection{Existence and bounds for the frozen problem}\label{ssec:app_bounds}

We show that the frozen problem has a solution on a time interval
$[0,\tau_0(M)]$ whose length depends only on a bound $M$ of the margin
function at $t=0$ (\Cref{lem:app_frozen}), and that its traces depend on
$\zeta$ in a Lipschitz way (\Cref{lem:app_lip}).

The proof of \Cref{lem:app_frozen} uses a continuity argument: we assume
that some bounds hold up to the first time $t^\dagger$ at which one of them
fails, we show that the solution then satisfies stronger bounds, and we
conclude that $t^\dagger$ cannot come before the end of the time interval.
Here, at $t=0$, every margin is at least $\frac1M$ and the gradients are at most
$M$. As long as the margins stay above $\frac1{2M}$ and the gradients below
$2\widetilde M$, the coefficients of the equations and the right-hand sides of
the ordinary differential equations satisfied by the gradients along the
characteristics are bounded. The gradients then stay close to their values at
the starting points, and the solution changes at a bounded rate. On a short enough
time interval, the solution therefore loses less than half of its margins, and
its gradients stay below $\widetilde M$. To make this
quantitative, we first fix the set in which the solution is to stay, and then
define three groups of constants: bounds on the coefficients, bounds on the
gradients at the starting points, and a bound on the rate of change of the solution, which gives
$\tau_0(M)$.

The subsection is organized accordingly. \Cref{sssec:app_set} defines the
set in which the frozen solution is to stay, \Cref{sssec:app_const} the three
groups of constants, \Cref{sssec:app_exist} contains the continuity argument
(\Cref{lem:app_frozen}), and \Cref{sssec:app_lipsec} proves the Lipschitz
dependence on $\zeta$ (\Cref{lem:app_lip}).

\subsubsection{The set in which the frozen solution stays}\label{sssec:app_set}

Fix $M\ge\widehat{\Nc}(0)$ and let $\delta\triangleq1/(8M)$. By
\eqref{eq:app_margin}, $d_0\ge\frac1M$ and
$|\dot z_{s,0}|\le|\jump{f_{s,0}}|/\jump{\eps_{s,0}}\le M^2$. Let
$\mathcal{S}_M$ be the set of triples $(\hat y,z_\zeta,\zeta)$, with
$\hat y=(\hat y^{(1)},\hat y^{(2)})$ continuous on $\overline{\Om_{1,0}}$ and
$\overline{\Om_{2,0}}$, $z_\zeta\in(0,z_b)$ and $\zeta\in\mathbb{R}$, such
that, with $\rho_i$, $\hat m_{\rm kin}$ and $\mathrm{dist}$ computed with
$(z_\zeta,\zeta)$,
\begin{equation}\label{eq:app_SM}
\begin{gathered}
    \|\hat y^{(1)}\|_{C^0}+\|\hat y^{(2)}\|_{C^0}\le M,\quad
  \jump{\epss}\ge\tfrac1M,\quad
  \tfrac1M\le\epss\le1-\tfrac1M,\\
  \min\{\hat m_{\rm kin},m_{\rm phase},\mathrm{dist}\}\ge\tfrac1M,\quad
  \tfrac1{Mz_b}\le\rho_i\le Mz_b,\quad|\zeta|\le8M^2+1 ,
\end{gathered}
\end{equation}
and $\mathcal{S}_M^{G}$ the subset where, in addition,
$\|\partial_x\hat y^{(1)}\|+\|\partial_x\hat y^{(2)}\|+\|\partial_x^2v_s^{(1)}\|\le G$.
Then $(\hat y(0),z_s(0),\dot z_{s,0})\in\mathcal{S}_M^{M}$, and the
interface traces of every element of $\mathcal{S}_{2M}$ satisfy
\eqref{eq:app_AM}: if they are of the form $y(a)$ with $a\in D_h$, then
$a\in A_M$. The frozen
solutions will be shown to stay in $\mathcal{S}_{2M}^{2\widetilde M}$, where
the margins are half as large as at $t=0$. All the constants below depend
only on $M$ and on $K$, $\gamma$, $z_b$, the closures, $\|v_m\|_{C^2}$ and
$\|\epsin\|_{C^1}$.

\subsubsection{Constants}\label{sssec:app_const}

We start with the coefficients of the equations, which we bound together
with their derivatives. On $\mathcal{S}_{2M}$,
$\epss\in[\frac1{2M},1-\frac1{2M}]$, $|f_s|\le2M$, $|v_s|\le4M^2$ and
$\rho_i^{-1}\le2Mz_b$. On $[\eps_c,1-\frac1{2M}]$, $\sigma_e''$ and
$\mathcal{R}'$ are continuous and $\sigma_e'\ge\min_{[\eps_c,1-1/(2M)]}\sigma_e'>0$
by \eqref{eq:sigmae}. Hence \eqref{eq:jac} and \eqref{eq:FSP} make
\begin{equation}\label{eq:app_a1}
\begin{aligned}
  a_1(M)\triangleq\sup\Bigl(&\sum_{i=1,2}
  \bigl(\|\hat A^{(i)}\|+\|\hat S^{(i)}\|+\|\partial_{(x,y)}\hat A^{(i)}\|
  +\|\partial_{(x,y)}\hat S^{(i)}\|\bigr)
  +\sum_{\hat\lambda}\|\partial_{(x,y)}\hat\lambda\|\\
  &+\sum_{i=1,2}\|\partial_{(x,R,\theta,\omega)}\hat\lambda^{(2)}_i\|
  +|S_{\rm R}|+\|\partial_RS_{\rm R}\|
  +\|\partial\mathfrak{r}\|+\|\partial\mathfrak{r}^{-1}\|\Bigr)
\end{aligned}
\end{equation}
finite, the supremum being over $\mathcal{S}_{2M}$ and $x\in[0,z_b]$, and the
second sum over the three speeds. Operator norms are taken for the max norm
on vectors. The terms in $R$ are taken over the parallelogram
\begin{equation*}
  \Pi_M\triangleq\Bigl\{R:\ \eps_c+\tfrac1{2M}\le\Psi^{-1}\bigl(\tfrac{R_2-R_1}2\bigr)\le1-\tfrac1{2M},\
  |R_1+R_2|\le8M^2\Bigr\},
\end{equation*}
which is convex and contains the values of $R$ on $\Om_{2,0}$ for
$\mathcal{S}_{2M}$, and $(\theta,\omega)$ ranges over
$[\frac1{2Mz_b},2Mz_b]\times[-2M(32M^2+1),2M(32M^2+1)]$; so $a_1$ is also a
Lipschitz constant of $\hat\lambda^{(2)}_i$ in $(x,R,\theta,\omega)$, and of
$S_{\rm R}$ and $\mathfrak{r}^{-1}$ in $R$. Let
\begin{equation*}
  \mathsf b(M)\triangleq\bigl(1+2M(1+4M^2)\bigr)\bigl(1+a_1(M)\bigr).
\end{equation*}
The gradients are estimated along the characteristics in the variables
$(p,q)$ on $\Om_{1,0}$ and $P=\partial_xR$ on $\Om_{2,0}$, whereas
$\widehat{\Nc}$ involves $\partial_x\hat y$. On $\mathcal{S}_{2M}$, $\mathsf
b(M)$ bounds the norms of the four linear maps that pass from one to the
other,
\begin{equation}\label{eq:app_conv}
\begin{gathered}
  (p,q)\mapsto\partial_x\hat y^{(1)}=(q,\,\epss p+v_sq),\qquad
  \partial_x\hat y^{(1)}\mapsto(p,q)=\Bigl(\frac{\partial_xf_s-v_s\partial_x\epss}{\epss},\,\partial_x\epss\Bigr),\\
  P\mapsto\partial_x\hat y^{(2)}=\partial\mathfrak{r}^{-1}P,\qquad
  \partial_x\hat y^{(2)}\mapsto P=\partial\mathfrak{r}\,\partial_x\hat y^{(2)} .
\end{gathered}
\end{equation}

The second group bounds the gradients at the starting points. These values are
given by the initial data on the line $t=0$, by
\eqref{eq:app_in0}--\eqref{eq:app_r0} at $x=0$, and, at $x=z_b$ and
$x=z_s(0)^+$, by differentiating the boundary conditions
\eqref{eq:app_bdata} in time. In the last case they involve the time
derivatives of the outgoing traces, which are bounded by $a_2$ below. The
constant $G_0$ bounds all the values at the starting points (this is checked in the proof of
\Cref{lem:app_frozen}), and $\widetilde M$ bounds the resulting gradients of
$\hat y$, with a safety factor. Let $C_r(M)$ be the supremum, over $\mathcal{S}_{2M}$ and
$|\zeta-\dot z_{s,0}|\le\delta$, of the right-hand side of \eqref{eq:app_r0};
there, $\partial_t\hat\lambda^{(1)}(t,0)=\dot\theta_\zeta(v_m+K/\gamma)+\theta_\zeta\dot v_m$
with $\dot\theta_\zeta=-z_s(0)\zeta/z_\zeta^2$, so this right-hand side
involves $\zeta$, $z_\zeta$ and $v_m,\dot v_m,\ddot v_m$, but not
$\dot\zeta$. Let
\begin{gather*}
  M_0(M)\triangleq\mathsf b(M)\,M,\qquad
  a_2(M)\triangleq(1+a_1)\,a_1\bigl(1+2\mathsf b(M)(M_0+1)\bigr),\\
  G_0(M)\triangleq\max\Bigl\{M_0,\ 2M\|v_m\|_{C^1},\ 2M\bigl(\|\epsin\|_{C^1}+4M\|v_m\|_{C^1}\bigr),\
  C_r,\\ 2M\bigl(a_1+2\|v_m\|_{C^1}+a_2\bigr),\ 2M\bigl(a_1+3\Theta_h\,a_2\bigr)\Bigr\},\\
  \widetilde M(M)\triangleq6\,\mathsf b(M)\bigl(G_0(M)+1\bigr)\ \ge12M .
\end{gather*}

The last group bounds the rate at which the solution changes. The speeds depend on $\zeta$, whose only time regularity is
$\mathrm{Lip}(\zeta)\le M'$, and the constants must not depend on $M'$ (this
is used for uniqueness in \Cref{prop:app_local}). We therefore separate the
part of the speeds that depends on $\zeta-\dot z_{s,0}$, which is small since
$|\zeta-\dot z_{s,0}|\le\delta$, from a part $\hat\lambda^\flat$ that is $C^1$
in time, with a time derivative bounded independently of $M'$:
\begin{equation*}
  \hat\lambda^{(1)}=\hat\lambda^{(1)\flat}-\frac{x(\zeta-\dot z_{s,0})}{z_\zeta},\qquad
  \hat\lambda^{(2)}_{1,2}=\hat\lambda^{(2)\flat}_{1,2}-\frac{(z_b-x)(\zeta-\dot z_{s,0})}{z_b-z_\zeta},
\end{equation*}
with $\hat\lambda^{(1)\flat}\triangleq\theta_\zeta v_s-x\dot z_{s,0}/z_\zeta$ and
$\hat\lambda^{(2)\flat}_{1,2}\triangleq\theta^{(2)}_\zeta(v_s\mp c_s)-(z_b-x)\dot z_{s,0}/(z_b-z_\zeta)$.
Let $\mathsf c(M)$ be the supremum, over $\mathcal{S}^{2\widetilde M}_{2M}$
and $x\in[0,z_b]$, of
\begin{equation*}
  |\partial_t\hat y|,\ |\partial_tR|,\ |\zeta|,\ |\dot\rho_i|,\
  |\partial_t\hat\lambda^{\flat}|,
\end{equation*}
and of the right-hand sides of \eqref{eq:app_riccati}--\eqref{eq:app_epsq}
and \eqref{eq:app_dxR}, with $\partial_t\hat y=-\hat A\partial_x\hat y+\hat S$;
it is finite by \eqref{eq:app_a1} and \eqref{eq:app_conv}. Hence, in a time $t$, $\hat y$, $R$, $z_\zeta$, $\rho_i$ and
$\hat\lambda^\flat$ change by at most $\mathsf c\,t$ at fixed $x$, and
$p,q,r,P_i$ by at most $\mathsf c\,t$ along a characteristic. The time
$\tau_1$ below makes this change at most $\frac1{4\widetilde M}$, a small
fraction of the margins. In $\tau_0$, we add the
condition $\tau\le1$, used in the Gr\"onwall estimates, and two conditions:
the first keeps the interface away from the boundaries, and the second makes
the backward characteristics through the outgoing traces reach $t=0$, see \eqref{eq:app_tr} below. Let
\begin{equation}\label{eq:app_tau0}
  \tau_1\triangleq\frac{1}{4\widetilde M\,\mathsf c},\qquad
  \tau_0(M)\triangleq\min\Bigl\{1,\ \frac{1}{2M(M^2+1)},\ \frac{1}{2Ma_1},\ \tau_1\Bigr\}.
\end{equation}
Since $d_0\ge\frac1M$ and $|\dot z_{s,0}|\le M^2$, $\tau\le\tau_0(M)$ gives
$\tau(|\dot z_{s,0}|+\delta)\le d_0/2$, hence \eqref{eq:app_thetaomega}, and,
as long as the frozen solution stays in $\mathcal{S}_{2M}$, where
$\sup|\hat\lambda|\le a_1(M)$,
\begin{equation}\label{eq:app_tr}
  \tau\sup|\hat\lambda|\le\frac{d_0}2<\min\{z_s(0),z_b-z_s(0)\}:
\end{equation}
the backward characteristics of the outgoing waves from
$(t,z_s(0)^\pm)$ and $(t,z_b)$, $t\le\tau$, reach $t=0$ without meeting a
boundary.

\subsubsection{Existence and bounds}\label{sssec:app_exist}

The next lemma states that the frozen problem is well posed on
$[0,\tau_0(M)]$ and that its solution stays in
$\mathcal{S}^{2\widetilde M}_{2M}$.

\begin{lemma}[Frozen problem]\label{lem:app_frozen}
Let $M\ge\widehat{\Nc}(0)$, $\delta=\frac1{8M}$, $\tau\le\tau_0(M)$, $M'>0$
and $\zeta\in\mathcal{Z}_{\tau,\delta,M'}$. Then:
\begin{enumerate}[label=(\roman*),leftmargin=2em,itemsep=1pt]
\item the frozen problem has a unique solution on $[0,\tau]$, with
$y^{(i)}_\zeta\in C^1$ and $\partial_xv_s^{(1)}\in C^1$; its arguments
$(R_1^+,y^-)(t)$ stay in $A_M$, and, for $t\le\tau$,
\begin{equation}\label{eq:app_boot}
\begin{gathered}
  \|\hat y\|\le M+\tfrac12,\qquad
  \text{each margin of \eqref{eq:app_SM}}\ge\tfrac1M-\tfrac1{4M}-\tfrac1{24M}
  >\tfrac1{2M},\\
  \|\partial_x\hat y^{(1)}\|+\|\partial_x\hat y^{(2)}\|+\|\partial_x^2v_s^{(1)}\|
  <\widetilde M,\qquad
  \rho_i\in\bigl[\tfrac34,\tfrac54\bigr],
\end{gathered}
\end{equation}
so that, with $|\zeta|\le M^2+\delta$, $(\hat y_\zeta(t),z_\zeta(t),\zeta(t))\in\mathcal{S}^{2\widetilde M}_{2M}$;
\item $|\partial_ty_\zeta(t,z_s(0)^\pm)|\le a_1(M)\bigl(1+2\widetilde M(M)\bigr)$ for $t\le\tau$.
\end{enumerate}
\end{lemma}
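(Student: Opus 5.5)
The proof is a continuity (bootstrap) argument on the maximal time of existence of a local $C^1$ frozen solution. I would proceed in three steps.

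\emph{Step 1: local solution.} For the frozen problem, $\zeta$ is given, so the interface is a known curve and the problem splits. On $\Om_{1,0}$ no data enter from $x=z_s(0)^-$ (\Cref{lem:app_bc}). I would therefore solve first the closed equation for $v_s$ in \eqref{eq:app_prim} along the characteristics $\partial_tX=\hat\lambda^{(1)}$. Then $p,r$ follow from the Riccati and linear equations \eqref{eq:app_riccati}, and $\epss,q$ from \eqref{eq:app_epsq}. The starting values come from $y_0$ on $t=0$ and from \eqref{eq:app_in0}--\eqref{eq:app_r0} on $x=0$, and they match at the corner by \Cref{def:compat}, which gives the $C^1$ regularity and $\partial_xv_s\in C^1$.

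On $\Om_{2,0}$, the trace $y^-$ is then a known $C^1$ function. The diagonal system \eqref{eq:app_Rsys} with the boundary data \eqref{eq:app_bdata} is a standard semilinear $2\times2$ problem with nonlinear boundary conditions. The conditions are transverse by \eqref{eq:app_KLbot}--\eqref{eq:app_KLpos}, and compatible to order one by the Hadamard remark following \Cref{lem:app_char}. I would invoke \cite[Ch.~2]{li1985boundary} for local existence and uniqueness of a $C^1$ solution, as long as $(R_1^+,y^-)\in D_h$. Uniqueness of the whole frozen solution follows from uniqueness of each piece.

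\emph{Step 2: bootstrap.} Let $t^\dagger\le\tau$ be the first time at which the solution leaves $\mathcal{S}_{2M}^{2\widetilde M}$ or ceases to exist. On $[0,t^\dagger)$ all coefficients are bounded by $a_1(M)$, and by the definition of $\mathsf c(M)$ the functions $\hat y$, $R$, $z_\zeta$, $\rho_i$, $\hat\lambda^\flat$ change by at most $\mathsf c\,t\le\frac1{4\widetilde M}\le\frac1{48M}$ (using $\widetilde M\ge12M$). Hence $\|\hat y\|\le M+\frac12$. The part of each speed depending on $\zeta-\dot z_{s,0}$ is at most $2\delta M z_b$-type small, which with $\delta=\frac1{8M}$ produces the $\frac1{4M}$ loss, so each margin is at least $\frac1M-\frac1{4M}-\frac1{24M}$. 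The bound $\rho_i\in[\frac34,\frac54]$ comes from $\tau(|\dot z_{s,0}|+\delta)\le d_0/2$ together with $\tau\le\frac1{2M(M^2+1)}$.

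For the gradients, I follow each characteristic back to its starting point. There the values of $(p,q,r,P)$ are bounded by $G_0(M)$:
\begin{itemize}
\item on $t=0$ by $M_0=\mathsf b M$;
\item on $x=0$ by \eqref{eq:app_in0}--\eqref{eq:app_r0} and $C_r$;
\item on $x=z_b$ and $x=z_s(0)^+$ by differentiating \eqref{eq:app_bdata} in time, using $\Theta_h$ from \Cref{lem:app_h}(ii) (valid because the traces lie in $A_M$) and $a_2$ for the time derivatives of the outgoing traces.
\end{itemize}
By \eqref{eq:app_tr}, those outgoing traces are reached from $t=0$, so $a_2$ indeed bounds them. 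Along the characteristic the change is at most $\mathsf c\,t\le 1$. Converting through \eqref{eq:app_conv} gives gradients of $\hat y$ at most $2\mathsf b(G_0+1)<\widetilde M$. All the estimates are strict improvements, so by continuity $t^\dagger=\tau$, and the $C^1$ norm cannot blow up, which rules out early breakdown of the local solution.

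\emph{Step 3: item (ii).} This follows directly from $\partial_t\hat y=-\hat A\partial_x\hat y+\hat S$ at the traces, bounded by $a_1(1+2\widetilde M)$.

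The main obstacle is the bound on gradients entering at $x=z_s(0)^+$. Their time derivatives involve $\partial_t$ of the outgoing traces, which in turn involve the gradients themselves, so the argument looks circular. I would break the circularity using the bootstrap hypothesis (gradients $\le2\widetilde M$) only inside $a_2$ through $M_0$: the outgoing traces come from $t=0$ data, so their derivatives are controlled by $M_0+1$, not by $\widetilde M$.
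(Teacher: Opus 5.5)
Your proposal is correct and follows the paper's proof essentially step for step: it constructs the solution by characteristics on $\Om_{1,0}$ and by \cite{li1985boundary} on $\Om_{2,0}$, bounds the gradients at the starting points by $G_0$, with the outgoing traces reached from $t=0$ by \eqref{eq:app_tr} (which is exactly how the paper removes the circularity you point out), splits the speeds into $\hat\lambda^\flat$ plus a small $\zeta$-dependent term, and closes with a continuity argument based on $\mathsf c\,t\le\frac1{4\widetilde M}$. Two small slips. First, the $\zeta$-dependent part of the speeds is bounded by exactly $2\delta=\frac1{4M}$, because $x\le z_s(0)\le 2z_\zeta$ and $z_b-x\le 2(z_b-z_\zeta)$, and not by a quantity of type $2\delta Mz_b$. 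Second, the two conditions you cite for $\rho_i$ only give $|\rho_i-1|\le\frac12$; the bound $\rho_i\in[\frac34,\frac54]$ comes from $|\dot\rho_i|\le\mathsf c$ and $\tau\le\tau_1$, which you had already listed.
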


\begin{proof}
For (i), we construct the
solution by characteristics and show that it exists as long as its $C^1$ norm
stays bounded; we then bound the gradients at the starting points; and we conclude
by a continuity argument, which shows that the solution cannot leave
$\mathcal{S}^{2\widetilde M}_{2M}$ before $t=\tau$. Part (ii) follows from
(i).

We first construct the solution. On $\Om_{1,0}$, let $X(t;\alpha)$ be the characteristic that starts from the
point of $\Gamma_{in}$ with parameter $\alpha$.
The unknowns $(X,\bar v_s,\bar p,\bar r,\bar\epss,\bar q)$ solve the closed
system $\partial_tX=\theta_\zeta\bar v_s-\omega_\zeta X$,
\eqref{eq:app_riccati}--\eqref{eq:app_epsq}, with data on $\Gamma_{in}$,
continuous by \Cref{def:compat}; this solution exists as long as $\bar p$
stays bounded, and is defined for $\alpha\in[-t,z_s(0)]$ even after $X$ has
left $\overline{\Om_{1,0}}$. Moreover
\begin{equation}\label{eq:app_dalphaX}
  \frac{\dd}{\dd t}\partial_\alpha X=(\theta_\zeta\bar p-\omega_\zeta)\,\partial_\alpha X,\qquad
  \partial_\alpha X(t_\alpha;\alpha)\in\bigl\{1,\ \hat\lambda^{(1)}(t_\alpha,0)\bigr\},
\end{equation}
so $\partial_\alpha X>0$ while $\hat\lambda^{(1)}>0$: the characteristics
do not cross, move to the right, leave only through $x=z_s(0)$, and cover
$[0,t]\times\overline{\Om_{1,0}}$. Composing with the inverse of
$\alpha\mapsto X(t;\alpha)$ gives $y^{(1)}_\zeta=(\bar\epss,\bar\epss\bar v_s)$,
$\partial_xv_s=\bar p$, $\partial_x^2v_s=\bar r$, $\partial_x\epss=\bar q$, so
$y^{(1)}_\zeta\in C^1$ and $\partial_xv_s^{(1)}\in C^1$. On $\Om_{2,0}$, the
coefficients of \eqref{eq:app_Rsys} are $C^1$ in $(x,R)$ and continuous in
$t$, and the boundary data \eqref{eq:app_bdata}, in which $y^-$ is given by
$\Om_{1,0}$, are $C^1$ in $t$ and $C^1$-compatible at the corners
(\Cref{def:compat}). By \cite[Ch.~2]{li1985boundary}, there is a unique
$C^1$ solution on a time interval depending only on the $C^1$ norms of the
data and of the coefficients and on a lower bound of $|\hat\lambda^{(2)}_i|$
at the boundaries, which can be continued as long as its $C^1$ norm remains
bounded, $|\hat\lambda^{(2)}_i|\ge\frac1{2M}$ at the boundaries and
$(R_1^+,y^-)$ stays in a compact subset of $D_h$. The coefficients contain
$\zeta$, so they are Lipschitz in $t$ with a constant depending on $M'$;
but the construction of \cite{li1985boundary} is by characteristics and uses
\eqref{eq:app_dxR}, which differentiates the coefficients in $x$ and $R$
only, so the existence time and the bounds do not depend on $M'$.

Next, we bound the gradients at the starting points. Let $t^\dagger$ be the first
time at which $(\hat y_\zeta,z_\zeta,\zeta)$ leaves
$\mathcal{S}^{2\widetilde M}_{2M}$; the bounds
below hold for $t<t^\dagger$. On the initial
line, $\widehat{\Nc}(0)\le M$ bounds $\partial_x\hat y$ and
$\partial_x^2v_s^{(1)}$ by $M$, so, by \eqref{eq:app_conv},
$|p|,|q|,|r|,|P_1|,|P_2|\le M_0$. At $x=0$, by
\eqref{eq:app_in0}--\eqref{eq:app_r0}, with
$\hat\lambda^{(1)}(t,0)\ge\hat m_{\rm kin}\ge\frac1{2M}$, $\theta_\zeta\le2$
and $\epsin\le1$,
\begin{equation*}
  |p|\le2M\|v_m\|_{C^1},\qquad
  |q|\le2M\bigl(\|\epsin\|_{C^1}+4M\|v_m\|_{C^1}\bigr),\qquad
  |r|\le C_r .
\end{equation*}
By \eqref{eq:app_tr}, the outgoing traces at $x=z_s(0)^\pm$ and $x=z_b$
come along characteristics from the initial line: both components of
$y^{(1)}$ at $x=z_s(0)^-$, $R_1$ at $x=z_s(0)^+$ and $R_2$ at $x=z_b$. As
long as the values of $p,q,P$ on these characteristics stay below $M_0+1$,
\eqref{eq:app_conv} and $\partial_t\hat y^{(1)}=-\hat A^{(1)}\partial_x\hat y^{(1)}+\hat S^{(1)}$
give $|\partial_ty^-|\le a_1(1+2\mathsf b(M_0+1))\le a_2$, and the transport
form $\partial_tR_i=S_{\rm R}-\hat\lambda^{(2)}_iP_i$ of \eqref{eq:app_dxR},
which involves only the outgoing $P_i$, gives
$|\partial_tR_1(t,z_s(0)^+)|,|\partial_tR_2(t,z_b)|\le a_1(2+M_0)\le a_2$.
For the incoming components, \eqref{eq:app_dxR} gives
$P_i=(S_{\rm R}-\partial_tR_i)/\hat\lambda^{(2)}_i$ with
$|\hat\lambda^{(2)}_i|\ge\frac1{2M}$, and \eqref{eq:app_bdata} gives
$\partial_tR_1(t,z_b)=2\dot v_m-\partial_tR_2(t,z_b)$ and
$\partial_tR_2^+=\nabla h\cdot\frac{\dd}{\dd t}(R_1^+,y^-)$, so
\begin{equation*}
  |P_1(t,z_b)|\le2M\bigl(a_1+2\|v_m\|_{C^1}+a_2\bigr),\qquad
  |P_2(t,z_s(0)^+)|\le2M\bigl(a_1+3\Theta_h\,a_2\bigr).
\end{equation*}
All these values at the starting points are bounded by $G_0$.

We now show by a continuity argument that the solution exists on
$[0,\tau]$. Let $T_e\le\tau$ be the existence time of the frozen solution. Before
$t^\dagger$, the bounds used to define the constants above hold; we show that the solution then
satisfies the stronger bounds \eqref{eq:app_boot}, which lie strictly inside
$\mathcal{S}^{2\widetilde M}_{2M}$, so that $t^\dagger\ge T_e$.

The speeds contain a term in $\zeta-\dot z_{s,0}$, whose time derivative is not bounded independently of $M'$, but which is small: since $|\zeta-\dot z_{s,0}|\le\delta$,
$x\le z_s(0)\le2z_\zeta$ on $\Om_{1,0}$ and
$z_b-x\le z_b-z_s(0)\le2(z_b-z_\zeta)$ on $\Om_{2,0}$,
\begin{equation*}
  \Bigl|\frac{x(\zeta-\dot z_{s,0})}{z_\zeta}\Bigr|,\
  \Bigl|\frac{(z_b-x)(\zeta-\dot z_{s,0})}{z_b-z_\zeta}\Bigr|
  \le2\delta=\frac1{4M}.
\end{equation*}
All the other quantities change at a rate at most $\mathsf c$, so, for
$t<\min\{t^\dagger,T_e\}$, at fixed $x$ and along characteristics,
\begin{gather*}
  |\hat y(t,x)-\hat y(0,x)|,\ |\rho_i(t)-1|,\ |z_\zeta(t)-z_s(0)|\le\mathsf c\,t\le\tfrac1{4\widetilde M},\\
  |\hat\lambda(t,x)-\hat\lambda(0,x)|\le\tfrac1{4\widetilde M}+\tfrac1{4M},\qquad
  |\bar g(t)-\bar g(t_\alpha)|\le\mathsf c\,t\le\tfrac14\quad\text{for }g\in\{p,q,r,P_1,P_2\},
\end{gather*}
We compare with $t=0$, where the state is in $\mathcal{S}^{M}_{M}$, and with
the starting points, where the gradients are bounded by $G_0$. Since
$\frac1{4\widetilde M}\le\frac1{48M}$, the state moves by at most $\frac12$,
each margin loses at most $\frac1{4M}+\frac1{24M}$ (the margin $\jump{\epss}$
loses $\frac2{4\widetilde M}$ because it involves two traces), and, by
\eqref{eq:app_conv}, the gradients are bounded by $\mathsf b(|p|+|q|)$,
$\mathsf b(|P_1|+|P_2|)$ and $|r|$, with $|p|,|q|,|r|,|P_i|\le G_0+\frac14$.
This gives \eqref{eq:app_boot} for $t<\min\{t^\dagger,T_e\}$, with
$\|\partial_x\hat y^{(1)}\|+\|\partial_x\hat y^{(2)}\|+\|\partial_x^2v_s^{(1)}\|
\le5\mathsf b(G_0+\frac14)<\widetilde M$. Since $Mz_b\ge2$
($z_b\ge2\,\mathrm{dist}\ge\frac2M$), $[\frac34,\frac54]\subset(\frac1{2Mz_b},2Mz_b)$,
so \eqref{eq:app_boot} is strictly inside $\mathcal{S}^{2\widetilde M}_{2M}$;
hence $t^\dagger\ge T_e$. Moreover, $(R_1^+,y^-)\in A_M$, so, by \eqref{eq:app_AM},
$|f_s^-|\le2M$, $\epss^-\in[\frac1{2M},\eps_c-\frac1{2M}]$ and
$v_s^--R_1^+=\mathcal{G}_{\epss^-}(\epss^+)\in[\mathcal{G}_{\epss^-}(\eps_c+\frac1{2M}),\mathcal{G}_{\epss^-}(1-\frac1{2M})]$;
these conditions define a compact set, contained in $D_h$ since
$\mathcal{G}_{\epss^-}(1-\frac1{2M})<\mathcal{G}_{\epss^-}(1^-)$, so
$(R_1^+,y^-)$ stays in a compact subset of $D_h$.
Then \eqref{eq:app_boot} bounds $\bar p$ and the $C^1$ norm on $[0,T_e)$, so,
by the construction above, the solution continues beyond $T_e$ unless
$T_e=\tau$: the frozen solution exists on $[0,\tau]$ and satisfies
\eqref{eq:app_boot}. Uniqueness follows from the uniqueness of the
characteristics on $\Om_{1,0}$ and from \cite[Ch.~2]{li1985boundary} on
$\Om_{2,0}$. The right-hand sides of \eqref{eq:app_riccati}--\eqref{eq:app_epsq}
do not involve $X$, so $|\bar g(t)-\bar g(t_\alpha)|\le\mathsf c\,t$, hence
$|\bar p|,|\bar q|,|\bar r|\le G_0+\frac14$ and $\frac1{2M}\le\bar\epss\le1$,
also hold on the extensions of the characteristics of $\Om_{1,0}$ beyond
$x=z_s(0)$, for $\alpha\in[-t,z_s(0)]$.

Finally, (ii) follows from \eqref{eq:app_boot}: $|\partial_t\hat y|\le\|\hat A\|\,|\partial_x\hat y|+\|\hat S\|\le a_1(1+2\widetilde M)$.
\end{proof}

\subsubsection{Lipschitz dependence on the interface velocity}\label{sssec:app_lipsec}

The contraction argument of \Cref{ssec:app_local} requires that the traces
of the frozen solution at the interface depend on $\zeta$ in a Lipschitz way,
with a constant that tends to $0$ with $t$. This is the content of the next
lemma.

\begin{lemma}[Lipschitz dependence on $\zeta$]\label{lem:app_lip}
Under the assumptions of \Cref{lem:app_frozen}, let
$\zeta_1,\zeta_2\in\mathcal{Z}_{\tau,\delta,M'}$,
$\tilde\zeta\triangleq\zeta_1-\zeta_2$, and
$\Delta y^{(i)}(t)\triangleq\|y^{(i)}_{\zeta_1}-y^{(i)}_{\zeta_2}\|_{C^0([0,t]\times\overline{\Om_{i,0}})}$.
For $t\le\tau$,
\begin{equation}\label{eq:app_lip}
  \Delta y^{(1)}(t)\le L_1\int_0^t\|\tilde\zeta\|_s\dd s,\qquad
  \Delta y^{(2)}(t)\le L_2\Bigl(\int_0^t\|\tilde\zeta\|_s\dd s+\Delta y^{(1)}(t)\Bigr),
\end{equation}
where the constants $L_1(M)$ and $L_2(M)$ depend only on $M$ and on the
data; their explicit values, \eqref{eq:app_lip1} and \eqref{eq:app_lip2}, are
obtained in the proof.
\end{lemma}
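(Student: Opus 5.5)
The argument follows the structure of the frozen problem. The free-settling zone is treated first: it receives no data from the interface, so $\zeta$ enters only through the coefficients $\theta_\zeta$, $\omega_\zeta$ and the inflow data \eqref{eq:app_in0}--\eqref{eq:app_r0}. The compression zone is treated second, with $y^-$ as interface data. Throughout, we use that both frozen solutions stay in $\mathcal{S}^{2\widetilde M}_{2M}$ by \Cref{lem:app_frozen}. Hence all coefficients are $a_1(M)$-Lipschitz in their arguments, and the gradients are bounded by $2\widetilde M$.

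\emph{Step 1: dependence of the coefficients on $\zeta$.} Since $z_{\zeta_1}-z_{\zeta_2}=\int_0^t\tilde\zeta$, we have $|z_{\zeta_1}(t)-z_{\zeta_2}(t)|\le t\|\tilde\zeta\|_t$. With \eqref{eq:app_thetaomega}, this gives
\[
|\theta_{\zeta_1}-\theta_{\zeta_2}|+|\theta^{(2)}_{\zeta_1}-\theta^{(2)}_{\zeta_2}|\le C(M)\,t\|\tilde\zeta\|_t,\qquad
|\omega_{\zeta_1}-\omega_{\zeta_2}|\le C(M)\|\tilde\zeta\|_t .
\]
The $\omega$ difference is not small in $t$. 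This is why the estimate is stated with $\int_0^t\|\tilde\zeta\|_s\dd s$: the $\omega$ difference enters only through a time integral along characteristics.

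\emph{Step 2: the zone $\Om_{1,0}$.} We compare the two characteristic systems $(X,\bar v_s,\bar\epss)$, $\partial_tX=\theta_\zeta\bar v_s-\omega_\zeta X$, started from the same point $\alpha$ of $\Gamma_{in}$. At the inflow boundary the data \eqref{eq:app_in0} for $v_s$ and $\epss$ do not depend on $\zeta$, since $g_1=0$ at $x=0$. The differences in $X$, $\bar v_s$ and $\bar\epss$ satisfy a linear differential inequality with bounded coefficients, forced by the coefficient differences of Step~1. It is important that $\bar p$ be bounded independently of $\zeta$, by $G_0+\tfrac14$. Gr\"onwall's lemma, using $t\le\tau_0\le1$, gives
\[
|\Delta X|+|\Delta\bar v_s|+|\Delta\bar\epss|\le C\int_0^t\|\tilde\zeta\|_s\dd s .
\]
To pass to a fixed point $(t,x)$, we compare $y_{\zeta_1}(t,x)$ with $y_{\zeta_2}(t,x)$. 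For this we use the characteristic of $\zeta_2$ through $(t,x)$, whose label is $\alpha_2$. The point $X_1(t;\alpha_2)$ differs from $x$ by $|\Delta X|$, and $y_{\zeta_1}$ is $2\widetilde M$-Lipschitz in $x$. This yields the first estimate of \eqref{eq:app_lip}, with $L_1=C(1+2\widetilde M)$. Characteristics whose labels $\alpha$ lie near the corner on the two sides are handled by the extensions noted at the end of \Cref{lem:app_frozen}.

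\emph{Step 3: the zone $\Om_{2,0}$.} We work in the Riemann invariants, using \eqref{eq:app_Rsys}. The differences $\Delta R_i$ are propagated along the $\zeta_2$-characteristics, as in Step~2. The speed differences are bounded by $a_1(|\Delta R|+|\Delta\theta|+|\Delta\omega|)$, and the source difference by $a_1|\Delta R|$. The boundary data are handled as follows. At $z_b$, the data $2v_m-R_2$ involve only the outgoing invariant. At $z_s(0)^+$, \Cref{lem:app_h}(ii) gives
\[
|\Delta R_2^+|\le\Theta_h\bigl(|\Delta R_1^+|+|\Delta y^-|\bigr),
\]
where $|\Delta y^-|\le\Delta y^{(1)}(t)$. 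Let $E(t)=\|\Delta R\|_{C^0([0,t])}$. Following a characteristic back to its starting point, possibly with one reflection at each boundary, gives
\[
E(t)\le C\Bigl(\int_0^tE+\int_0^t\|\tilde\zeta\|_s\dd s+\Delta y^{(1)}(t)\Bigr).
\]
By \eqref{eq:app_tr}, within time $\tau$ a backward characteristic undergoes at most one reflection, so the reflection factor $\Theta_h$ appears only a bounded number of times. Gr\"onwall's lemma and the map $\mathfrak{r}^{-1}$ then give the second estimate of \eqref{eq:app_lip}.

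\emph{Main obstacle.} The main obstacle is the step from characteristic-wise differences to differences at a fixed point. The two families of characteristics differ, and the speeds depend on $\zeta$ only in a Lipschitz way with constant $M'$. The constants must not depend on $M'$. We expect to resolve this, as above, by comparing along a single family and absorbing the shift with the uniform gradient bound $2\widetilde M$. This bound avoids any time derivative of $\zeta$, which is exactly why \Cref{lem:app_frozen} was proved independently of $M'$.
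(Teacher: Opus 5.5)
Your Step~2 does not close as stated. Along a characteristic of $\Om_{1,0}$, $\frac{\dd}{\dd t}\bar\epss=-\theta_\zeta\bar\epss\,\bar p$ by \eqref{eq:app_epsq}. So the equation for $\Delta\bar\epss$ contains the forcing $-\theta_{\zeta_1}\bar\epss_{,1}\,\Delta\bar p$, where $\Delta\bar p$ is the difference of $p=\partial_xv_s$ along the two characteristics with the same label. The bound $|\bar p|\le G_0+\frac14$ controls the coefficients but not this term. Hence $(\Delta X,\Delta\bar v_s,\Delta\bar\epss)$ do not satisfy an inequality driven only by the coefficient differences of Step~1, and you must add $\bar p$ to the compared system. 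By \eqref{eq:app_riccati},
\[
  \frac{\dd}{\dd t}\Delta\bar p=\bigl(\omega_{\zeta_1}-\gamma-\theta_{\zeta_1}(\bar p_1+\bar p_2)\bigr)\Delta\bar p
  -(\theta_{\zeta_1}-\theta_{\zeta_2})\bar p_2^{\,2}+(\omega_{\zeta_1}-\omega_{\zeta_2})\bar p_2 .
\]
Unlike the data of $v_s$ and $\epss$, the inflow datum of $p$ depends on $\zeta$: by \eqref{eq:app_in0}, $p_0=-\dot v_m\,z_\zeta/\bigl(z_s(0)(v_m+K/\gamma)\bigr)$. Hence $|\Delta\bar p(t_\alpha)|\le C\,|z_{\zeta_1}-z_{\zeta_2}|(t_\alpha)\le C\int_0^t\|\tilde\zeta\|_s\dd s$ on the lateral part of $\Gamma_{in}$, and it vanishes on the initial line. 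Gr\"onwall's lemma then bounds $\Delta\bar p$, hence $\Delta\bar\epss$, by $C\int_0^t\|\tilde\zeta\|_s\dd s$; this is the paper's term (I). The omission is structural, not cosmetic. It is the loss of one derivative caused by the Jordan block (\Cref{rem:derivloss}): a $C^0$ Lipschitz estimate on $\epss$ needs one on $\partial_xv_s$.

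With this repair, the rest of your plan works, and it partly differs from the paper.
\begin{itemize}
\item On $\Om_{1,0}$, your shift from $X_1(t;\alpha_2)$ to $x$ by the Lipschitz bound of $y_{\zeta_1}(t,\cdot)$ is equivalent to the paper's term (II). The paper instead bounds $|\alpha_1-\alpha_2|$ with the lower bound \eqref{eq:app_jacflow} on $\partial_\alpha X$ and then uses $\partial_\alpha\bar W$. The extensions of \Cref{lem:app_frozen} are needed because $X_1(t;\alpha_2)$ may exceed $z_s(0)$. At the corner label $\alpha=0$, only the compatibility of \Cref{def:compat} is used.
\item On $\Om_{2,0}$, you evaluate $R^{\zeta_1}$ along the $\zeta_2$-characteristic and absorb the speed difference with the uniform bound on $\partial_xR^{\zeta_1}$ given by \eqref{eq:app_conv}. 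This is simpler than the paper's argument. The paper follows both characteristics through $(t,x)$, bounds $|X_1-X_2|$ by the backward Gr\"onwall estimate \eqref{eq:app_Xdiff}, and compares the data at two different starting points case by case. Since both frozen solutions live on the same fixed domain, your starting point is common to both.
\end{itemize}
The single reflection allowed by \eqref{eq:app_tr}, the relation $\Delta R_1(t,z_b)=-\Delta R_2(t,z_b)$, and \Cref{lem:app_h}(ii) then close the Gr\"onwall estimate as you describe. \Cref{lem:app_h}(ii) applies because both trace arguments stay in $A_M$.
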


\begin{proof}
We compare the two frozen solutions at the same point $(t,x)$ by following
their characteristics back to their starting points. The difference has three
sources: the coefficients $\theta_\zeta$ and $\omega_\zeta$ differ; the
characteristics through $(t,x)$ differ, and so do their starting points; and, on
$\Om_{2,0}$, the data at the interface differ, because they are computed
through $h$ from the traces of $\Om_{1,0}$. We first bound the difference of
the coefficients, then treat $\Om_{1,0}$, which does not depend on
$\Om_{2,0}$, and finally $\Om_{2,0}$.

Index by $k=1,2$ the quantities computed with $\zeta_k$. By
\Cref{lem:app_frozen}, along both frozen solutions,
$|\bar v_s|\le4M^2$, $\epss\le1$, $\hat m_{\rm kin}\ge\frac1{2M}$ and, by
\eqref{eq:app_conv},
\begin{equation*}
  |p|,|q|,|r|,|P_1|,|P_2|\le G_1(M)\triangleq2\mathsf b(M)\widetilde M(M).
\end{equation*}

The coefficients depend on $\zeta$ through $\zeta$ itself and through
$z_\zeta$. By \eqref{eq:app_thetaomega},
$|\omega_\zeta|,|\omega^{(2)}_\zeta|\le\omega_M\triangleq2M(M^2+1)$. For
$t\le\tau\le1$, $|z_{\zeta_1}-z_{\zeta_2}|(t)\le\int_0^t|\tilde\zeta|\le t\|\tilde\zeta\|_t$
and $z_{\zeta_k}\ge\frac{z_s(0)}2$, so
\begin{gather*}
  |\theta_1-\theta_2|=\frac{z_s(0)\,|z_{\zeta_1}-z_{\zeta_2}|}{z_{\zeta_1}z_{\zeta_2}}
  \le\frac{4t\|\tilde\zeta\|_t}{z_s(0)},\\
  |\omega_1-\omega_2|\le\frac{|\tilde\zeta|}{z_{\zeta_1}}
  +\frac{|\zeta_2|\,|z_{\zeta_1}-z_{\zeta_2}|}{z_{\zeta_1}z_{\zeta_2}}
  \le\frac{2+2\omega_Mt}{z_s(0)}\|\tilde\zeta\|_t,
\end{gather*}
and the same on $\Om_{2,0}$ with $z_b-z_s(0)$ in place of $z_s(0)$. Since
$d_0\ge\frac1M$, on both zones,
\begin{equation}\label{eq:app_coeff}
  |\theta_1-\theta_2|(t)+|\omega_1-\omega_2|(t)\le C_\theta\|\tilde\zeta\|_t,\qquad
  C_\theta\triangleq M(6+2\omega_M) .
\end{equation}

On $\Om_{1,0}$, let $\alpha_k$ be the starting point on $\Gamma_{in}$ of the
characteristic of $\zeta_k$ through $(t,x)$, and
$\bar W\triangleq(\bar v_s,\bar p,\bar\epss)$. Then
\begin{equation}\label{eq:app_split}
  W_{\zeta_1}(t,x)-W_{\zeta_2}(t,x)
  =\underbrace{\bar W_{\zeta_1}(t;\alpha_1)-\bar W_{\zeta_2}(t;\alpha_1)}_{\rm(I)}
  +\underbrace{\bar W_{\zeta_2}(t;\alpha_1)-\bar W_{\zeta_2}(t;\alpha_2)}_{\rm(II)} ,
\end{equation}
where (I) is the effect of $\zeta$ along a characteristic with a fixed
starting point, and (II) the effect of the displacement of the starting point. We
bound the two terms one after the other.

For (I), we note first that the equation for $\bar v_s$ and its initial value do not
involve $\zeta$, so $\bar v_{s,1}=\bar v_{s,2}$ for the same starting point. Let
$\Delta p\triangleq\bar p_1-\bar p_2$ and
$\Delta\epss\triangleq\bar\epss_{,1}-\bar\epss_{,2}$. At the starting point,
$\Delta\epss=0$, $\Delta p=0$ on the initial line and, at $x=0$, by
\eqref{eq:app_in0} with $\theta_\zeta^{-1}=z_\zeta/z_s(0)$ and $v_m\ge0$,
\begin{equation*}
  |\Delta p(t_\alpha)|=\frac{|\dot v_m|\,|z_{\zeta_1}-z_{\zeta_2}|(t_\alpha)}{z_s(0)(v_m+K/\gamma)}
  \le\frac{\gamma M\|v_m\|_{C^1}}{K}\int_0^t\|\tilde\zeta\|_s\dd s .
\end{equation*}
By \eqref{eq:app_riccati}--\eqref{eq:app_epsq},
\begin{equation*}
\begin{aligned}
  \frac{\dd}{\dd t}\Delta p&=\bigl(\omega_1-\gamma-\theta_1(\bar p_1+\bar p_2)\bigr)\Delta p
  -(\theta_1-\theta_2)\bar p_2^{\,2}+(\omega_1-\omega_2)\bar p_2,\\
  \frac{\dd}{\dd t}\Delta\epss&=-\theta_1\bar p_2\,\Delta\epss-\theta_1\bar\epss_{,1}\,\Delta p
  -(\theta_1-\theta_2)\bar\epss_{,2}\bar p_2 .
\end{aligned}
\end{equation*}
The coefficients of $\Delta p$ and $\Delta\epss$ are bounded by
$\kappa(M)\triangleq\omega_M+\gamma+4G_1$, so, by \eqref{eq:app_coeff} and
Gr\"onwall's lemma, for $t\le1$,
\begin{gather*}
  |\Delta p(t)|\le C_p\int_0^t\|\tilde\zeta\|_s\dd s,\qquad
  C_p\triangleq e^{\kappa}\Bigl(\frac{\gamma M\|v_m\|_{C^1}}{K}+C_\theta(G_1^2+G_1)\Bigr),\\
  |\Delta\epss(t)|\le e^{\kappa}\bigl(2C_p+C_\theta G_1\bigr)\int_0^t\|\tilde\zeta\|_s\dd s,
\end{gather*}
and, since $\Delta\bar v_s=0$,
\begin{equation}\label{eq:app_I}
  |{\rm(I)}|\le C_{\rm I}\int_0^t\|\tilde\zeta\|_s\dd s,\qquad
  C_{\rm I}\triangleq C_p+e^{\kappa}\bigl(2C_p+C_\theta G_1\bigr).
\end{equation}

For (II), we bound the distance between the starting points $\alpha_1$ and
$\alpha_2$ by the distance at time $t$ between the two characteristics
starting from $\alpha_1$. The functions $\alpha\mapsto X(t;\alpha)$ and
$\alpha\mapsto\bar W(t;\alpha)$ are continuous (by the compatibility at the
corner, \Cref{def:compat}) and $C^1$ on each side of $\alpha=0$, so the mean
value inequalities below hold with the one-sided bounds. Since $\bar v_s$ is the same for both, these characteristics
satisfy
$\frac{\dd}{\dd t}(X_1-X_2)=-\omega_2(X_1-X_2)+(\theta_1-\theta_2)\bar v_s-(\omega_1-\omega_2)X_1$,
hence, since $0\le X_1\le z_s(0)<z_b$,
$|X_1-X_2|(t;\alpha_1)\le e^{\omega_M}(4M^2+z_b)C_\theta\int_0^t\|\tilde\zeta\|_s\dd s$.
By \eqref{eq:app_dalphaX}, with $\hat\lambda^{(1)}(t_\alpha,0)\in[\frac1{2M},a_1]$
and $|\theta_\zeta\bar p-\omega_\zeta|\le\Lambda_1\triangleq2G_1+\omega_M$,
\begin{equation}\label{eq:app_jacflow}
  \frac1{2M}\,e^{-\Lambda_1t}\le\partial_\alpha X(t;\alpha)\le\mu_X\,e^{\Lambda_1t},\qquad
  \mu_X\triangleq\max\{1,a_1\}.
\end{equation}
Since $X_2(t;\alpha_2)=x=X_1(t;\alpha_1)$,
\begin{equation}\label{eq:app_footprint}
\begin{gathered}
  |\alpha_1-\alpha_2|\le2Me^{\Lambda_1}\,|X_2(t;\alpha_1)-X_2(t;\alpha_2)|
  \le C_\alpha\int_0^t\|\tilde\zeta\|_s\dd s,\\
  C_\alpha\triangleq2Me^{\Lambda_1+\omega_M}(4M^2+z_b)C_\theta,
\end{gathered}
\end{equation}
and, since $\partial_\alpha\bar W=(\bar p,\bar r,\bar q)\,\partial_\alpha X$,
\begin{equation}\label{eq:app_II}
  |{\rm(II)}|\le\sup|\partial_\alpha\bar W_{\zeta_2}|\,|\alpha_1-\alpha_2|
  \le3G_1\mu_Xe^{\Lambda_1}C_\alpha\int_0^t\|\tilde\zeta\|_s\dd s .
\end{equation}
Since $f_s=\epss v_s$, $\epss\le1$ and $|v_s|\le4M^2$, \eqref{eq:app_I} and
\eqref{eq:app_II} give the first inequality of \eqref{eq:app_lip}, with
\begin{equation}\label{eq:app_lip1}
  L_1\triangleq(1+4M^2)\bigl(C_{\rm I}+3G_1\mu_Xe^{\Lambda_1}C_\alpha\bigr).
\end{equation}

On $\Om_{2,0}$, the speeds depend on $R$, hence on $\zeta$, so the
displacement of the characteristics and the change of the values along them
are coupled and cannot be bounded separately as (I) and (II) were; moreover,
the interface data depend on $\Om_{1,0}$ through $h$. We derive instead an integral inequality for the distance
between the two solutions and conclude with Gr\"onwall's lemma. Let
$D(t)\triangleq\|R^{\zeta_1}-R^{\zeta_2}\|_{C^0([0,t]\times\overline{\Om_{2,0}})}$.
By \eqref{eq:app_a1}, $a_1$ is a Lipschitz constant of
$(x,R,\theta,\omega)\mapsto\hat\lambda^{(2)}_i$, $R\mapsto S_{\rm R}$ and
$R\mapsto\mathfrak{r}^{-1}(R)$, and $|S_{\rm R}|\le a_1$; moreover
$|P^{\zeta_2}|\le G_1$, $|\partial_tR^{\zeta_2}|\le\mathsf c$ at
$x\in\{z_s(0)^+,z_b\}$, $|\hat\lambda^{(2)}_i|\ge\frac1{2M}$, and
$|h(a)-h(a')|\le\Theta_h|a-a'|_1$ for $a,a'\in A_M$ (\Cref{lem:app_h}(ii)). Fix $(t,x)$ and $i$, and let $X_k$ be
the $i$-characteristic of $\zeta_k$ through $(t,x)$, with starting point
$(t_k,b_k)$. Since
$|R^{\zeta_1}(s,X_1)-R^{\zeta_2}(s,X_2)|\le D(s)+G_1|X_1-X_2|(s)$,
\begin{equation*}
  \Bigl|\frac{\dd}{\dd s}(X_1-X_2)\Bigr|
  \le a_1\bigl((1+G_1)|X_1-X_2|+D(s)+C_\theta\|\tilde\zeta\|_s\bigr),
\end{equation*}
and Gr\"onwall's lemma, applied backward in time from $X_1(t)=X_2(t)$, gives for $t\le1$
\begin{equation}\label{eq:app_Xdiff}
  |X_1-X_2|(s)\le C_X\int_0^t\bigl(D(\sigma)+\|\tilde\zeta\|_\sigma\bigr)\dd\sigma,\qquad
  C_X\triangleq a_1(1+C_\theta)e^{a_1(1+G_1)} .
\end{equation}
We now compare the data at the two starting points. Each characteristic moves
monotonically with speed at least $\frac1{2M}$, so the starting points are close
in time and space. With $\delta_{\rm in}\triangleq|R^{\zeta_1}-R^{\zeta_2}|(t_1,b_1)$,
the difference of the two boundary data at the same starting point, the data at
the two starting points differ by at most
\begin{equation*}
\begin{cases}
  G_1|X_1-X_2|(0), & t_1=t_2=0,\\
  \delta_{\rm in}+\mathsf c|t_1-t_2|,\quad |t_1-t_2|\le2M|X_1-X_2|, & b_1=b_2\ \text{lateral},\\
  \delta_{\rm in}+\mathsf ct_1+G_1|X_2(0)-b_1|,\quad
  \tfrac{t_1}{2M},\,|X_2(0)-b_1|\le|X_1-X_2|(t_1), & b_1\ \text{lateral},\ t_2=0,
\end{cases}
\end{equation*}
the last case using the continuity of the data at the corner
(\Cref{def:compat}); the case $t_1=0$, $b_2$ lateral, is symmetric.
Integrating \eqref{eq:app_Rsys} along $X_1$ and $X_2$, with
$|S_{\rm R}(R^{\zeta_1}(s,X_1))-S_{\rm R}(R^{\zeta_2}(s,X_2))|\le a_1(D(s)+G_1|X_1-X_2|(s))$,
\begin{equation}\label{eq:app_z2a}
\begin{gathered}
  \bigl|R_i^{\zeta_1}(t,x)-R_i^{\zeta_2}(t,x)\bigr|
  \le\delta_{\rm in}+C\int_0^t\bigl(D(s)+\|\tilde\zeta\|_s\bigr)\dd s,\\
  C\triangleq a_1+C_X\bigl(G_1+a_1G_1+2M(\mathsf c+a_1)\bigr),
\end{gathered}
\end{equation}
with $\delta_{\rm in}=0$ when both starting points are on the initial line. It
remains to bound $\delta_{\rm in}$ at the two lateral boundaries. By
\eqref{eq:app_bdata},
\begin{equation*}
  \delta_{\rm in}=|R_2^{\zeta_1}-R_2^{\zeta_2}|(t_1,z_b)\ \text{at}\ z_b,\qquad
  \delta_{\rm in}\le\Theta_h\bigl(|R_1^{\zeta_1}-R_1^{\zeta_2}|(t_1,z_s(0)^+)+2\Delta y^{(1)}(t_1)\bigr)\
  \text{at}\ z_s(0)^+,
\end{equation*}
and, by \eqref{eq:app_tr}, the outgoing traces on the right-hand sides
satisfy \eqref{eq:app_z2a} with $\delta_{\rm in}=0$. Hence
\begin{equation*}
  D(t)\le(2+\Theta_h)\,C\int_0^t\bigl(D(s)+\|\tilde\zeta\|_s\bigr)\dd s
  +2\Theta_h\,\Delta y^{(1)}(t),
\end{equation*}
and Gr\"onwall's lemma (with $\Delta y^{(1)}$ non-decreasing and $t\le1$), with
$\Delta y^{(2)}\le a_1D$, gives the second inequality of \eqref{eq:app_lip},
with
\begin{equation}\label{eq:app_lip2}
  L_2\triangleq a_1\max\{(2+\Theta_h)C,\,2\Theta_h\}e^{(2+\Theta_h)C}.
\end{equation}
\end{proof}

\subsection{Local existence and uniqueness}\label{ssec:app_local}

The interface velocity is recovered as a fixed point: the Rankine--Hugoniot
quotient of the traces of the frozen solution defines a map
$\zeta\mapsto\mathcal{T}(\zeta)$, which is a contraction on a short time
interval. The constants are the following: $\Theta_y\,t$ is the Lipschitz constant of the traces as functions of $\zeta$ (\Cref{lem:app_lip}), $\Theta_{RH}$ that of the
Rankine--Hugoniot quotient, $M'$ bounds the Lipschitz constant in time of
$\mathcal{T}(\zeta)$, and $\tau_*$ is small enough for $\mathcal{T}$ to map
$\mathcal{Z}_{\tau,\delta,M'}$ into itself and to be a contraction. Let
\begin{equation}\label{eq:app_taustar}
\begin{gathered}
  \Theta_y(M)\triangleq\max\bigl\{L_1,\,L_2(1+L_1)\bigr\},\qquad
  \Theta_{RH}(M)\triangleq4M(1+4M^2),\\
  M'(M)\triangleq\Theta_{RH}\,a_1(1+2\widetilde M),\qquad
  \tau_*(M)\triangleq\min\Bigl\{\tau_0,\ \frac{1}{8MM'},\ \frac{1}{2\Theta_{RH}\Theta_y}\Bigr\}.
\end{gathered}
\end{equation}

\begin{proposition}[Local existence and uniqueness]\label{prop:app_local}
Let $y_0$, $v_m$ and $\epsin$ satisfy the assumptions of \Cref{thm:wp}, and
$M\ge\widehat{\Nc}(0)$. There is an admissible interface solution on
$[0,\tau_*(M)]$, with $z_s\in C^2$, which satisfies \eqref{eq:app_boot}. Two
admissible solutions with the same data coincide on a neighbourhood of
$t=0$.
\end{proposition}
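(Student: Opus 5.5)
The plan is a contraction argument for $\mathcal{T}$ on $\mathcal{Z}_{\tau,\delta,M'}$ with $\tau=\tau_*(M)$, $\delta=\frac1{8M}$ and $M'=M'(M)$ from \eqref{eq:app_taustar}; uniqueness comes from the same contraction estimate applied to admissible solutions. By \Cref{lem:app_frozen}, every $\zeta\in\mathcal{Z}_{\tau,\delta,M'}$ has a frozen solution on $[0,\tau]$ that stays in $\mathcal{S}^{2\widetilde M}_{2M}$ and whose interface arguments lie in $A_M$. Hence $\mathcal{T}(\zeta)$ in \eqref{eq:app_T} is well defined and continuous on $[0,\tau]$.

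\emph{Step 1: $\mathcal{T}$ maps $\mathcal{Z}_{\tau,\delta,M'}$ into itself.}
\begin{itemize}
\item On the set of traces with $|f_s^\pm|\le2M$ and $\jump{\epss}\ge\frac1M$, the quotient $\mathfrak{s}=\jump{f_s}/\jump{\epss}$ has gradient bounded by $\Theta_{RH}=4M(1+4M^2)$.
\item By \Cref{lem:app_frozen}(ii), $|\partial_t y_\zeta(t,z_s(0)^\pm)|\le a_1(1+2\widetilde M)$. So $\mathrm{Lip}(\mathcal{T}(\zeta))\le\Theta_{RH}a_1(1+2\widetilde M)=M'$.
\item At $t=0$ the traces are those of $y_0$, so $\mathcal{T}(\zeta)(0)=\dot z_{s,0}$.
\item Then $\|\mathcal{T}(\zeta)-\dot z_{s,0}\|_\tau\le M'\tau\le\frac1{8M}=\delta$, by $\tau\le 1/(8MM')$.
\end{itemize}

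\emph{Step 2: contraction.} By \Cref{lem:app_lip}, for $t\le\tau$ the trace differences satisfy
\[
\Delta y^{(1)}(t)+\Delta y^{(2)}(t)\le \Theta_y\int_0^t\|\tilde\zeta\|_s\,\dd s\le\Theta_y\,t\,\|\tilde\zeta\|_t,
\]
after inserting the first inequality of \eqref{eq:app_lip} into the second; this gives the constant $\Theta_y=\max\{L_1,L_2(1+L_1)\}$. Multiplying by $\Theta_{RH}$ gives
\[
\|\mathcal{T}(\zeta_1)-\mathcal{T}(\zeta_2)\|_\tau\le\Theta_{RH}\Theta_y\tau\|\tilde\zeta\|_\tau\le\tfrac12\|\tilde\zeta\|_\tau.
\]
The set $\mathcal{Z}_{\tau,\delta,M'}$ is closed in $C^0([0,\tau])$, because uniform limits preserve the Lipschitz bound. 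Banach's theorem therefore gives a fixed point $\zeta$.

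As noted after \eqref{eq:app_T}, $(y_\zeta,z_\zeta)$ then solves \eqref{eq:app_FBVP}. The estimates \eqref{eq:app_boot} give the strict inequalities of \Cref{def:admissible}, and \Cref{lem:app_fixed} turns this into an admissible interface solution. The property $z_s\in C^2$ follows because $\dot z_s=\zeta=\mathfrak{s}(y^+,y^-)$ is a $C^1$ function of traces that are $C^1$ in $t$.

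\emph{Step 3: uniqueness.} Let $(y,z_s)$ be an admissible solution with the same data. Near $t=0$, by continuity, it lies in $\mathcal{S}_{2M}^{2\widetilde M}$ and has $|\dot z_s-\dot z_{s,0}|\le\delta$, so $\dot z_s\in\mathcal{Z}_{\tau',\delta,M'}$ for small $\tau'$. The Lipschitz bound comes from the $C^1$ traces, hence with the same $M'$ through the argument of Step 1. Its restriction is a frozen solution for $\zeta=\dot z_s$, and by the uniqueness in \Cref{lem:app_frozen} it coincides with $y_{\dot z_s}$. Thus $\dot z_s$ is a fixed point of $\mathcal{T}$, and the contraction forces it to coincide with the fixed point constructed above.

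\emph{Main obstacle.} The delicate point is that all constants must be independent of $M'$; otherwise the choice of $\tau_*$ in terms of $M'$ would be circular. This independence is exactly what \Cref{lem:app_frozen} and \Cref{lem:app_lip} were arranged to give, so I would check carefully that only $M$ enters $\Theta_y$ and $a_1(1+2\widetilde M)$. A second point to check is that admissible solutions satisfy $z_s\in C^1$ and $C^1$ traces, which gives the time-Lipschitz bound needed in Step 3.
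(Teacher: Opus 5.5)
Your proposal is correct and follows the paper's proof: you show that $\mathcal{T}=\mathfrak{s}\circ\mathcal{T}_1$ maps $\mathcal{Z}_{\tau,\delta,M'}$ into itself and contracts there, using \Cref{lem:app_frozen}(ii), \Cref{lem:app_lip} and the Lipschitz bound $\Theta_{RH}$ on $\mathfrak{s}$, and you obtain $z_s\in C^2$ from the $C^1$ traces. The only difference is in uniqueness: you place the competitor in $\mathcal{S}^{2\widetilde M}_{2M}$ by continuity to get the same $M'$, whereas the paper allows any $M''\ge M'$ and uses $\mathcal{T}(\mathcal{Z}_{\tau,\delta,M''})\subseteq\mathcal{Z}_{\tau,\delta,M'}$ with a contraction factor independent of $M''$; both routes work.

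Two constants need fixing. In Step~2, \Cref{lem:app_lip} bounds $\max\{\Delta y^{(1)},\Delta y^{(2)}\}$ by $\Theta_y\,t\,\|\tilde\zeta\|_t$, which is what the max norm on the four traces requires; it does not bound the sum. In Step~1, \eqref{eq:app_boot} only gives $\jump{\epss}>\frac1{2M}$ for the frozen traces, not $\jump{\epss}\ge\frac1M$, so $\Theta_{RH}$ should be derived, as in the paper, from $j_0=\frac1{2M}$ and $|\jump{f_s}|\le 2M$.
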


\begin{proof}
We write $\mathcal{T}$ as the composition of the map $\zeta\mapsto$ traces,
which is Lipschitz by \Cref{lem:app_lip}, and of the quotient $\mathfrak{s}$,
which is Lipschitz while $\jump{\epss}$ stays away from $0$. Let
$\tau\le\tau_*(M)$, $\delta=\frac1{8M}$ and $M'=M'(M)$, and
\begin{equation*}
  \mathcal{T}_1(\zeta)\triangleq\bigl(y^{(1)}_\zeta(\cdot,z_s(0)^{-}),\,y^{(2)}_\zeta(\cdot,z_s(0)^{+})\bigr),
\end{equation*}
with the max norm $\|Y\|$ on $Y=(y^-,y^+)\in\mathbb{R}^4$, so that
$\mathcal{T}=\mathfrak{s}\circ\mathcal{T}_1$. By \Cref{lem:app_lip} and
$\int_0^t\|\tilde\zeta\|_s\dd s\le t\|\tilde\zeta\|_t$,
\begin{equation}\label{eq:app_T1}
  \|\mathcal{T}_1(\zeta_1)-\mathcal{T}_1(\zeta_2)\|_t
  \le\Theta_y\,t\,\|\tilde\zeta\|_t .
\end{equation}
For $Y,Y'$ with $\jump{\epss},\jump{\epss}'\ge j_0>0$,
\begin{equation}\label{eq:app_LRH}
  \mathfrak{s}(Y)-\mathfrak{s}(Y')
  =\frac{\jump{f_s}-\jump{f_s}'}{\jump{\epss}'}
  +\mathfrak{s}(Y)\,\frac{\jump{\epss}'-\jump{\epss}}{\jump{\epss}'},\qquad
  |\mathfrak{s}(Y)-\mathfrak{s}(Y')|\le\frac{2(1+|\mathfrak{s}(Y)|)}{j_0}\|Y-Y'\|.
\end{equation}
By \eqref{eq:app_boot}, the traces of the frozen solutions satisfy
$\jump{\epss}\ge\frac1{2M}$ and $|\mathfrak{s}|\le\frac{2M}{1/(2M)}=4M^2$, so
\eqref{eq:app_LRH} holds with $j_0=\frac1{2M}$ and Lipschitz constant
$\Theta_{RH}$. With \Cref{lem:app_frozen}(ii),
\begin{equation}\label{eq:app_contr}
\begin{gathered}
  \|\mathcal{T}(\zeta_1)-\mathcal{T}(\zeta_2)\|_t\le
  \Theta_{RH}\Theta_y\,t\,\|\tilde\zeta\|_t,\qquad
  \mathcal{T}(\zeta)(0)=\dot z_{s,0},\\
  \mathrm{Lip}\bigl(\mathcal{T}(\zeta)\bigr)\le\Theta_{RH}
  \bigl\|\partial_ty_\zeta(\cdot,z_s(0)^{\pm})\bigr\|\le M',
\end{gathered}
\end{equation}
where the identity follows from \Cref{ass:wp}. Since
$\|\mathcal{T}(\zeta)-\dot z_{s,0}\|_\tau\le M'\tau\le\delta$, $\mathcal{T}$
maps $\mathcal{Z}_{\tau,\delta,M'}$ into itself, and, since
$\Theta_{RH}\Theta_y\tau\le\frac12$, it is a contraction. Its fixed point
$\zeta^*$ is a quotient of traces that are $C^1$ in time, so $\zeta^*\in C^1$
and $z_s\in C^2$; by \Cref{lem:app_fixed,lem:app_bc,lem:app_h} and
\eqref{eq:app_boot}, $(y^{(1)},y^{(2)},z_s)$ is an admissible solution.

It remains to prove uniqueness. By \Cref{lem:RH}, the interface velocity of an
admissible solution is the quotient $\mathfrak{s}$ of $C^1$ traces, and, by
\Cref{lem:app_bc}, the solution satisfies \eqref{eq:app_bdata}.
For $\tau$ small, its velocity belongs to $\mathcal{Z}_{\tau,\delta,M''}$
for some $M''\ge M'$ and is a fixed point of $\mathcal{T}$. Since $M'$ and
$\Theta_y$ do not depend on $M''$,
$\mathcal{T}(\mathcal{Z}_{\tau,\delta,M''})\subseteq\mathcal{Z}_{\tau,\delta,M'}
\subseteq\mathcal{Z}_{\tau,\delta,M''}$, with the same contraction factor,
so this fixed point is $\zeta^*$.
\end{proof}

\subsection{Maximal solution and breakdowns}\label{ssec:app_cont}

Since the length of the existence interval depends only on a bound of the
margin function at its initial time, the local solution can be continued as
long as this function stays bounded; this is shown in
\Cref{sssec:app_crit}. In \Cref{sssec:app_break}, we identify which of its
terms can become unbounded, which completes the proof of \Cref{thm:wp}.

\subsubsection{Continuation criterion}\label{sssec:app_crit}

\begin{proposition}[Continuation criterion]\label{prop:app_cont}
Under the assumptions of \Cref{thm:wp}, there is a unique maximal admissible
interface solution on $[0,T_{\max})$, and
\begin{equation}\label{eq:app_crit}
  T_{\max}<\infty\quad\Longrightarrow\quad
  \limsup_{t\to T_{\max}^{-}}\widehat{\Nc}(t)=\limsup_{t\to T_{\max}^{-}}\Nc(t)=+\infty .
\end{equation}
\end{proposition}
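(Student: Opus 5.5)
The plan is a standard restart argument built on \Cref{prop:app_local}, whose existence time $\tau_*(M)$ depends only on a bound $M$ for the mapped margin function at the initial time, and on the data norms $\|v_m\|_{C^2}$, $\|\epsin\|_{C^1}$, which are uniform on $\Rp$ by the hypotheses of \Cref{thm:wp}.

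\emph{Step 1: maximal solution.} Let $T_{\max}$ be the supremum of the times $T$ for which an admissible interface solution exists on $[0,T)$. We show that two admissible solutions on $[0,T_1)$ and $[0,T_2)$ coincide on the common interval. The set where they agree is closed in $[0,\min(T_1,T_2))$ by continuity. It is open by the uniqueness part of \Cref{prop:app_local}, applied from any agreement time $t_0$ with $y(t_0,\cdot)$ as initial datum. Gluing the solutions then gives a unique maximal solution.

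\emph{Step 2: restarting from $t_0$.} For $t_0<T_{\max}$, the state $y(t_0,\cdot)$ must be a valid initial datum. It satisfies \Cref{ass:wp}: phase separation, strict kinematic admissibility with $\dot z_s(t_0)=\mathfrak{s}$, and the Rankine--Hugoniot relation hold by \Cref{def:admissible} and \Cref{lem:RH}, and $v_s\in C^2$ on the free-settling zone. The shifted inputs $v_m(t_0+\cdot)$, $\epsin(t_0+\cdot)$ are $C^2$-compatible with it, because the solution is $C^1$ up to the corners and satisfies the equations and boundary conditions there. The second-order corner condition \eqref{eq:compat2} holds since $\partial_x^2v_s\in C^0$ and the velocity equation is satisfied up to $x=0$. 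The interface condition \eqref{eq:compat_int} holds because $H\equiv0$ for $t$ near $t_0$.

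We then use the margin function $\widehat{\Nc}^{[t_0]}$ based at $t_0$. At time $t_0$ we have $\rho_i=1$, so $\widehat{\Nc}^{[t_0]}(t_0)\le\Nc(t_0)$, since gradients and kinematic margins coincide there. Hence \Cref{prop:app_local} gives a solution on $[t_0,t_0+\tau_*(M)]$ whenever $M\ge\Nc(t_0)$.

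\emph{Step 3: contradiction.} Suppose $T_{\max}<\infty$ and $\limsup_{t\to T_{\max}^-}\Nc(t)<\infty$. Then $\Nc\le M$ on some interval $[T_{\max}-\epsilon,T_{\max})$. Pick $t_0>T_{\max}-\min(\epsilon,\tau_*(M))$. By Step 2 the solution extends to $t_0+\tau_*(M)>T_{\max}$. By the uniqueness of Step 1 this extension agrees with the maximal solution, contradicting maximality.

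Hence $\limsup\Nc=\infty$. The equality with $\limsup\widehat{\Nc}=\infty$ follows from \eqref{eq:Nequiv} in \Cref{lem:Nequiv}. One detail is needed there: near $T_{\max}$, the bound $\Nc\le M$ keeps $d(t)\ge1/M$, so the two functions are comparable with constants depending on $M$ and $d_0$.

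The main obstacle is Step 2, specifically checking that the second-order and interface compatibility conditions hold automatically at $t_0$. The cleanest route is to note that they are the time-derivative identities of the boundary relations, evaluated on a solution that is $C^1$ up to the corners, with $\partial_x^2v_s\in C^0$. I would make this explicit via the characteristic equations of \Cref{lem:app_char} at $x=0$. A second, minor point is that the new solution has $z_s\in C^2$ on each piece. To see that it is $C^2$ across $t_0$, use uniqueness: both pieces solve the same problem, and $\zeta^*$ is $C^1$ as a quotient of $C^1$ traces.
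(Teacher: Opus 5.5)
Your proposal is correct and follows the paper's proof. You restart at $t_0$ using that the re-based margin satisfies $\widehat{\Nc}^{[t_0]}(t_0)=\Nc(t_0)$, glue solutions into a unique maximal one by local uniqueness, and reach a contradiction because $\tau_*$ depends only on $M$ and on data norms that do not change under a time shift. The only difference is bookkeeping: you assume $\Nc$ bounded near $T_{\max}$ and pass to $\widehat{\Nc}$ at the end, while the paper assumes $\widehat{\Nc}\le M$ and bounds $\Nc(t_0)$ by \eqref{eq:Nequiv}.

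Your closing ``detail'' about $d(t)\ge1/M$ is unnecessary, and that bound is no longer available once you have shown $\limsup\Nc=\infty$. The constants in \eqref{eq:Nequiv} depend only on $z_b$ and $d_0$, so the two limsups are infinite together directly.
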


\begin{proof}
We first show that the solution can be restarted from any time; with the
local uniqueness of \Cref{prop:app_local}, this gives a unique maximal
solution. Since the length $\tau_*$ of the restarted interval depends only on
a bound of the margin function at the restart time, the solution can then be
extended beyond $T_{\max}$ if this function stays bounded.

At any time $t_0$ of an admissible solution, the state $y(t_0,\cdot)$
satisfies \Cref{ass:wp}, and the inputs are $C^2$-compatible with it
(\Cref{def:compat}): the boundary conditions and $H=0$ hold identically on
$[0,t_0]$, and the solution is $C^1$ with $\partial_xv_s^{(1)}\in C^1$, so their time
derivatives hold at $t_0$. \Cref{prop:app_local} therefore applies from
$t_0$, with the map \eqref{eq:app_rho} based at $t_0$ and any
$M\ge\widehat{\Nc}^{[t_0]}(t_0)$.

Two admissible solutions coincide on their common interval: otherwise, let
$t_1$ be the infimum of the times where they differ; they coincide at $t_1$
by continuity, and the uniqueness part of \Cref{prop:app_local}, started at
$t_1$, gives a contradiction. Hence there is a unique maximal solution, on
$[0,T_{\max})$ with $T_{\max}\ge\tau_*(\widehat{\Nc}(0))>0$.

Assume $T_{\max}<\infty$ and $\widehat{\Nc}\le M$ on $[0,T_{\max})$. For
$t_0\in[0,T_{\max})$, the map based at $t_0$ has $\rho_i(t_0)=1$, so the
mapped and physical margin functions coincide at $t_0$, and
\Cref{lem:Nequiv} (base time $0$) gives
\begin{equation}\label{eq:app_restart}
  \widehat{\Nc}^{[t_0]}(t_0)=\Nc(t_0)
  \le(1+z_b)^2\bigl(1+\widehat{\Nc}(t_0)\bigr)^3\le(1+z_b)^2(1+M)^3\triangleq M_1 .
\end{equation}
With $t_0\triangleq\max\{0,\,T_{\max}-\frac12\tau_*(M_1)\}$,
\Cref{prop:app_local} with $M_1$ extends the solution to
$t_0+\tau_*(M_1)>T_{\max}$, a contradiction. Hence $\widehat{\Nc}$ is
unbounded on $[0,T_{\max})$; since it is finite and continuous there,
$\limsup_{t\to T_{\max}^-}\widehat{\Nc}(t)=+\infty$, and \Cref{lem:Nequiv}
gives the same result for $\Nc$.
\end{proof}

\subsubsection{The four breakdowns}\label{sssec:app_break}

It remains to show that the blow-up of $\widehat{\Nc}$ is one of the
breakdowns (i)--(iv) of \Cref{thm:wp}. We split $\widehat{\Nc}$ into the
six structural terms of $\widehat{\Nc}_0$,
\begin{equation}\label{eq:app_Nstr}
  \widehat{\Nc}_{\rm str}\triangleq\frac{1}{\jump{\epss}}+\frac{1}{\hat m_{\rm kin}}
  +\frac{1}{m_{\rm phase}}+\frac{1}{\mathrm{dist}(z_s,\partial\Om)}
  +\frac{1}{\inf_\Om\epss}+\frac{1}{1-\sup_\Om\epss},
\end{equation}
the $C^0$ norm of the state, and the derivatives. Let
$M_s\triangleq\sup_{[0,T_{\max})}\widehat{\Nc}_{\rm str}\in(0,\infty]$. The
next lemma shows that, while the structural terms stay bounded, the state
stays bounded, and that $\partial_x^2v_s$ and $\partial_x\epss$ on
$\Om_{1,0}$ stay bounded as long as $p=\partial_xv_s$ does.

\begin{lemma}[State and higher derivatives]\label{lem:app_ctrl}
Let $(y,z_s)$ be the maximal solution of \Cref{prop:app_cont}, with $T_{\max}<\infty$ and $M_s<\infty$. Then:
\begin{enumerate}[label=(\alph*),leftmargin=2em,itemsep=1pt]
\item $\sup_{[0,T_{\max})}\|\hat y\|<\infty$;
\item if $\mathsf\Pi_p\triangleq\sup_{[0,T_{\max})}\|p\|<\infty$, then
$\|\partial_x^2v_s^{(1)}\|$, $\|\partial_x\epss^{(1)}\|$ and
$\|\partial_xf_s^{(1)}\|$ are bounded on $[0,T_{\max})$.
\end{enumerate}
\end{lemma}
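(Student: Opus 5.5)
Part (a) comes almost directly from the structural terms, and part (b) from integrating the characteristic ODEs of \Cref{lem:app_char}.

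\emph{Part (a).} Since $M_s<\infty$, we have $\frac1{M_s}\le\epss\le1-\frac1{M_s}$ on both zones, so only $f_s$ needs a bound. On $\Om_{1,0}$, the velocity equation of \eqref{eq:app_riccati} is closed along characteristics: $\frac{\dd}{\dd t}\bar v_s=K-\gamma(\bar v_s-v_m)$. Its solution is bounded by the data at the starting point plus $(K+\gamma\|v_m\|_{C^0})/\gamma$. The data are the initial data on the line $t=0$ or $v_m+K/\gamma$ at $x=0$, so $\bar v_s$, hence $f_s=\epss v_s$, is bounded by a constant that does not depend on $t$.

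On $\Om_{2,0}$, I would use the Riemann invariants \eqref{eq:app_Rsys}. Here $\Psi(\epss)$ is bounded because $\epss\le1-\frac1{M_s}$, so bounding $R_1,R_2$ is the same as bounding $v_s$. Along each characteristic, $\frac{\dd}{\dd t}\bar R_i=S_{\rm R}=K-\mathcal{R}(\epss)(v_s-v_m)$, with $\mathcal{R}$ bounded above and below on $[\eps_c,1-\frac1{M_s}]$. Write $v_s=(R_1+R_2)/2$ and set $\mathcal{M}(t)=\max_i\|R_i(t)\|$. Then $|S_{\rm R}|\le C(1+\mathcal{M})$, and the boundary data close the system:
\begin{itemize}
\item at $z_b$, $R_1=2v_m-R_2$;
\item at $z_s(0)^+$, $R_2^+=R_1^++2\Psi(\epss^+)$, by \eqref{eq:app_hexpl}, and the term $2\Psi(\epss^+)$ is bounded.
\end{itemize}
Incoming values are therefore bounded by outgoing values plus a constant. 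Following characteristics backward, possibly through several reflections, gives $\mathcal{M}(t)\le C+C\int_0^t\mathcal{M}$. Since $T_{\max}<\infty$, Gr\"onwall's lemma bounds $\mathcal{M}$ on $[0,T_{\max})$. The one point that needs care is that each reflection multiplies the bound only by $1$ plus an additive constant. It does: the two reflections add bounded terms and never amplify.

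\emph{Part (b).} I would integrate the linear ODEs of \eqref{eq:app_riccati}--\eqref{eq:app_epsq} along characteristics of $\Om_{1,0}$.
\begin{itemize}
\item The coefficient of $\bar r$ is $-3\theta_\zeta\bar p+2\omega_\zeta-\gamma$. It is bounded, because $\mathsf\Pi_p<\infty$, $\theta_\zeta=\rho_1^{-1}\le z_b/d\le z_bM_s$, and $|\omega_\zeta|=|\dot z_s|/z_s\le M_s\,|\jump{f_s}|/\jump{\epss}$, which is bounded by (a).
\item Transit times are at most $T_{\max}<\infty$, so $\bar r$ is bounded by its starting value times $e^{CT_{\max}}$.
\item The starting values are bounded: on $t=0$ by $\widehat{\Nc}(0)$, and at $x=0$ by \eqref{eq:app_r0}. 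There the denominator $\hat\lambda^{(1)}(t,0)$ is at least $\hat m_{\rm kin}\ge1/M_s$, and $\partial_t\hat\lambda^{(1)}(t,0)$ involves $\dot z_s$, $\dot v_m$ and $\ddot v_m$, all bounded.
\item With $\bar r$ bounded, the equation for $\bar q$ is linear with bounded coefficients and a bounded forcing $-\theta_\zeta\bar\epss\bar r$. It gives a bound on $\bar q$, starting from \eqref{eq:app_in0} at $x=0$.
\item Finally $\partial_xf_s=\epss p+v_sq$ by \eqref{eq:app_conv}, which is bounded by (a).
\end{itemize}

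I expect the main obstacle to be the reflection bookkeeping in (a) on $\Om_{2,0}$. The number of reflections in finite time is bounded because the speeds are bounded below by $1/M_s$, but writing the integral inequality cleanly needs the same case analysis of starting points as in \Cref{lem:app_lip}.
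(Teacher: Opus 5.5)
Your part (b), and the free-settling half of part (a), follow the paper's argument. In (a) on $\Om_{1,0}$ you use the closed linear equation for $\bar v_s$. In (b) you integrate the linear equations for $(\bar r,\bar q)$ along characteristics of $\Om_{1,0}$, with coefficients bounded through $\mathsf\Pi_p$, $\rho_1^{-1}\le z_bM_s$ and $|\dot z_s/z_s|\le M_s|\jump{f_s}|/\jump{\epss}$ (finite by (a)), and with starting values from \eqref{eq:app_in0}--\eqref{eq:app_r0}.

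\textbf{The gap is on $\Om_{2,0}$ in part (a).} You use only relative bounds at the two ends:
\[
|R_1(t,z_b)|\le|R_2(t,z_b)|+2\|v_m\|,\qquad |R_2^+|\le|R_1^+|+2\Psi(1-1/M_s).
\]
A backward chain bouncing between $z_s(0)^+$ and $z_b$ therefore picks up an additive constant at every reflection. Your inequality $\mathcal M(t)\le C+C\int_0^t\mathcal M$ then holds only with $C$ proportional to the number of reflections in $[0,t]$.

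Your reason for bounding that number points the wrong way. The lower bound $|\hat\lambda^{(2)}_i|\ge 1/M_s$ bounds the \emph{duration} of each leg from above, not the number of legs. Bounding the number of legs needs an \emph{upper} bound on $|\hat\lambda^{(2)}_i|=\rho_2^{-1}|v_{\rm rel}\mp c_s|$, that is, on $|v_s|$ and $|\dot z_s|$. Those are exactly the quantities you are trying to bound, so the argument as written is circular.

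\textbf{The missing observation is that the bottom condition is absolute, not relative.} The paper uses it as follows. $B=0$ means $v_s(t,z_b)=v_m(t)$, so $R_1(t,z_b)=v_m-\Psi(\epss(t,z_b))$. Hence $|R_1(t,z_b)|\le\|v_m\|+\Psi_s$, with $\Psi_s\triangleq\Psi(1-1/M_s)$, whatever the outgoing $R_2$ is.

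Every backward chain then has at most one reflection. Follow the $R_2$-characteristic back to $t=0$ or to the interface, where $|R_2^+|\le|R_1^+|+2\Psi_s$. Then follow the $R_1$-characteristic back to $t=0$ or to $z_b$, where the chain stops.

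Set $U_R(t)\triangleq\sup_{[0,t]}\max_i\|R_i\|$ and $\mathcal R_s\triangleq\sup_{[0,1-1/M_s]}\mathcal R$. This gives
\[
U_R(t)\le U_R(0)+\|v_m\|+3\Psi_s+\int_0^t\bigl(K+\mathcal R_s(U_R(s)+\|v_m\|)\bigr)\dd s,
\]
and Gr\"onwall closes on $[0,T_{\max})$. No case analysis as in \Cref{lem:app_lip} is needed.

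The interface condition is in fact absolute too. By \Cref{lem:app_h}(i), $v_s^+=v_s^--\mathcal J$, where $v_s^-$ is bounded by your $\Om_{1,0}$ estimate and $\mathcal J^2\le M_s\,\sigma_e(1-1/M_s)$.

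Your multi-reflection version could be rescued by a bootstrap on short time intervals, using $|\hat\lambda^{(2)}_i|\le C(1+U_R)$. That is more work than the one-line fix above.
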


\begin{proof}
For (a), we bound $v_s$ on each zone; this bounds $\hat y=(\epss,\epss v_s)$
since $\epss\le1$. On $\Om_{1,0}$, the first equation of \eqref{eq:app_riccati} is
linear, so along any characteristic
\begin{gather*}
  \bar v_s(t)=e^{-\gamma(t-t_\alpha)}\bar v_s(t_\alpha)
  +\int_{t_\alpha}^t\gamma e^{-\gamma(t-s)}\Bigl(v_m(s)+\frac{K}{\gamma}\Bigr)\dd s,\\
  |\bar v_s(t)|\le\max\Bigl\{\|v_{s,0}\|,\ \|v_m\|+\frac{K}{\gamma}\Bigr\}.
\end{gather*}
On $\Om_{2,0}$, let $\Psi_s\triangleq\Psi(1-1/M_s)$, $\mathcal{R}_s\triangleq\sup_{[0,1-1/M_s]}\mathcal{R}$
and $U_R(t)\triangleq\sup_{[0,t]}\max_i\|R_i\|$. Since
$0\le\Psi(\epss)\le\Psi_s$ and $|v_s|\le\max_i|R_i|$, \eqref{eq:app_Rsys}
gives $|\frac{\dd}{\dd t}\bar R_i|\le\Upsilon(t)\triangleq K+\mathcal{R}_s(U_R(t)+\|v_m\|)$,
and \eqref{eq:app_bdata} gives
\begin{gather*}
  |R_1(t,z_b)|=|v_m-\Psi(\epss)|\le\|v_m\|+\Psi_s,\\
  |R_2(t,z_s(0)^+)|=|R_1(t,z_s(0)^+)+2\Psi(\epss^+)|\le|R_1(t,z_s(0)^+)|+2\Psi_s .
\end{gather*}
Following an $R_2$-characteristic back to the interface and then, from that
point, the $R_1$-characteristic back to $t=0$ or to $z_b$,
\begin{equation}\label{eq:app_C0lin}
\begin{gathered}
  U_R(t)\le U_R(0)+\|v_m\|+3\Psi_s+\int_0^t\Upsilon(s)\dd s,\\
  U_R(t)\le\Bigl(U_R(0)+\|v_m\|+3\Psi_s+(K+\mathcal{R}_s\|v_m\|)t\Bigr)e^{\mathcal{R}_st},
\end{gathered}
\end{equation}
the second by Gr\"onwall's lemma. Since $|v_s|\le U_R$ on $\Om_{2,0}$ and
$T_{\max}<\infty$, this proves (a).

For (b), we use that the equations for $r$ and $q$ along the characteristics
are linear. On $\Om_{1,0}$, \eqref{eq:app_riccati}--\eqref{eq:app_epsq} are linear in
$(\bar r,\bar q)$ with coefficients bounded by
$\kappa_T\triangleq3\theta_T\mathsf\Pi_p+2\omega_T+\gamma$, where
$\theta_T\triangleq z_bM_s\ge\rho_1^{-1}$ and
$\omega_T\triangleq\sup|\mathfrak{s}|M_s\ge|\dot z_s/z_s|$ (finite by (a),
since $|\mathfrak{s}|\le2\|f_s\|M_s$), so
\begin{equation}\label{eq:app_rq}
  |\bar r(t)|\le|\bar r(t_\alpha)|e^{\kappa_Tt},\qquad
  |\bar q(t)|\le\bigl(|\bar q(t_\alpha)|+\theta_T\|\epss\|\,\|r\|\,t\bigr)e^{\kappa_Tt},
\end{equation}
with values at the starting points bounded in terms of $M_s$, $y_0$ and the inputs by
\eqref{eq:app_in0}--\eqref{eq:app_r0}.
Hence $\|\partial_x^2v_s^{(1)}\|$ and $\|\partial_x\epss^{(1)}\|$ are bounded on
$[0,T_{\max})$, and so is
$\|\partial_xf_s^{(1)}\|\le\|\partial_x\epss^{(1)}\|\,\|v_s\|+\mathsf\Pi_p$.
\end{proof}

By contrast, $p$ and $P$ satisfy the quadratic equations
\eqref{eq:app_riccati} and \eqref{eq:app_dxR}. For instance, with the bounds $\rho_1^{-1}\le\theta_T$ and $|\dot z_s/z_s|\le\omega_T$ from the proof of \Cref{lem:app_ctrl}, $|\bar p|$ satisfies
\begin{equation}\label{eq:app_riccati2}
  \frac{\dd}{\dd t}|\bar p|\le\theta_T|\bar p|^2+(\omega_T+\gamma)|\bar p|,
\end{equation}
whose solutions may blow up in finite time while the state stays bounded.

\begin{proof}[Proof of \Cref{thm:wp}]
Existence, uniqueness, $z_s\in C^2$ and the blow-up of $\Nc$ are
\Cref{prop:app_local,prop:app_cont}. Let $T_{\max}<\infty$. We distinguish
two cases, according to whether the structural terms stay bounded.
\begin{itemize}
\item If $M_s=\infty$, one of the six terms of \eqref{eq:app_Nstr} tends to
$\infty$ along a sequence $t_n\to T_{\max}$, since each is finite and
continuous on $[0,T_{\max})$:
\begin{equation}\label{eq:app_sort}
\begin{aligned}
  \mathrm{dist}\bigl(z_s,\partial\Om\bigr)\to0
  &\ \Longrightarrow\ \text{(i)},\\
  \jump{\epss}\to0,\ m_{\rm phase}\to0 \text{ or } \hat m_{\rm kin}\to0
  &\ \Longrightarrow\ \text{(iii)},\\
  \inf_\Om\epss\to0 \text{ or } \sup_\Om\epss\to1
  &\ \Longrightarrow\ \text{(iv)} ,
\end{aligned}
\end{equation}
with $\jump{\epss}\ge2m_{\rm phase}$ in the second line and
\eqref{eq:app_reduce} to pass from $\hat m_{\rm kin}$ to $m_{\rm kin}$.
\item If $M_s<\infty$, then $\sup\|\hat y\|<\infty$ by \Cref{lem:app_ctrl}(a). If moreover
$\sup_{[0,T_{\max})}(\|p\|+\|P\|)<\infty$, then \Cref{lem:app_ctrl}(b) and
$\|\partial_x\hat y^{(2)}\|\le\|\partial\mathfrak{r}^{-1}\|\,\|P\|$ bound
every term of $\widehat{\Nc}$ on $[0,T_{\max})$, which contradicts
\eqref{eq:app_crit}. So $\|p\|+\|P\|\to\infty$ along a sequence $t_n\to
T_{\max}$, and, since $|p|\le M_s(|\partial_xf_s|+|v_s||\partial_x\epss|)$ and
$|P|\le\|\partial\mathfrak{r}\|\,|\partial_x\hat y^{(2)}|$,
$\|\partial_x\hat y(t_n)\|\to\infty$, which is (ii) by \eqref{eq:app_reduce}.
\end{itemize}
In the second case ($M_s<\infty$), the state stays bounded and only the
gradients blow up; this is possible because of the quadratic terms in
\eqref{eq:app_riccati2} and \eqref{eq:app_dxR}.
\end{proof}

\bibliographystyle{abbrv}
\bibliography{paper}

\end{document}